\newtheorem{theorem}{Theorem}[section]
\newtheorem{lemma}[theorem]{Lemma}
\newtheorem{corollary}[theorem]{Corollary}
\newtheorem{proposition}[theorem]{Proposition}
\theoremstyle{definition}
\newtheorem{definition} [theorem] {Definition}
\newtheorem{remark} [theorem] {Remark}
\newcommand{\mc}{\mathcal}
\newcommand{\mb}{\mathbb}
\newcommand{\la}{\lambda}
\newcommand{\norm}[1]{\left\lVert#1\right\rVert}
\newcommand{\pd}[2]{\frac{\partial#1}{\partial#2}}
\newcommand{\R}{\mb{R}}
\newcommand{\e}{\varepsilon}
\newcommand{\Tr}{\mathop{\rm{Tr}}}
\newcommand{\dive}{\mathop{\rm{div}}}
\newcommand{\df}[4]{\ensuremath\sideset{_{#1}}{_{#4}}{\mathop{\left\langle #2, #3 \right\rangle}}}
\newcommand{\ps}[3]{\left( #2, #3 \right)_{#1}}
\newcommand{\con}{{\rm{const}\,}}
\newcommand{\iy}{\infty}
\newcommand{\vp}{\varphi}
\newcommand{\dyle}{\displaystyle}
\newcommand{\n}{\nabla}
\begin{document}

\title[ Fractional heat equation involving Hardy-Leray Potential]
{Fujita exponent for \textbf{non-local} parabolic equation involving the
Hardy-Leray potential}
\thanks{Work partially supported by the MUR-PRIN-20227HX33Z “Pattern formation in nonlinear phenomena”, the Project PDI2019-110712GB-100, MINECO, Spain and "Ayuda de Excelencia al profesorado universitario", in UAM.
The first author is also partially supported by an Erasmus grant
from Autonoma University of Madrid.}
\author[B. Abdellaoui, G. Siclari, A. Primo ]{Boumediene Abdellaoui, Giovanni Siclari and Ana Primo}

\address{\hbox{\parbox{5.7in}{\medskip\noindent {B. Abdellaoui, Laboratoire d'Analyse Nonlin\'eaire et Math\'ematiques
Appliqu\'ees. \hfill \break\indent D\'epartement de
Math\'ematiques, Universit\'e Abou Bakr Belka\"{\i}d, Tlemcen,
\hfill\break\indent Tlemcen 13000, Algeria.}}}}

\address{\hbox{\parbox{5.7in}{\medskip\noindent {G. Siclari, Dipartimento di Matematica "Franceso Brioschi"
\hfill \break\indent
Politecnico di Milano
\hfill\break\indent
Via Giuseppe Ponzio 31, 20133, Milano, Italy}}}}

\address{\hbox{\parbox{5.7in}{\medskip\noindent{A. Primo, Departamento de Matem\'aticas,\\ Universidad Aut\'onoma de Madrid,\\
        28049, Madrid, Spain. \\[3pt]
        \em{E-mail addresses: }\\{\tt boumediene.abdellaoui@inv.uam.es, \tt giovanni.siclari@polimi.it, ana.primo@uam.es
         }.}}}}

\date{\today}

\thanks{2010 {\it Mathematics Subject Classification. $35B25, 35B44, 35K58, 35B33, 47G20$}   \\
   \indent {\it Keywords.}  Fujita exponent, Fractional Cauchy heat equation with Hardy-Leray type potential, blow-up, global solution}

 \begin{abstract}
In this paper we analyse the existence and non-existence of non-negative solutions to a non-local parabolic equation with a Hardy-Leray type potential.
More precisely, we consider the problem
\begin{equation*}
\begin{cases}
(w_t-\Delta w)^s=\frac{\la}{|x|^{2s}} w+w^p +f, &\text{  in  }\R^N\times (0,+\infty),\\
w(x,t)=0, &\text{  in  }\R^N\times (-\infty,0],
\end{cases}
\end{equation*}
where $N> 2s$, $0<s<1$ and $0<\la<\Lambda_{N,s}$, the optimal
constant in the fractional  Hardy-Leray inequality.  In particular we show the existence of a critical existence exponent $p_{+}(\lambda, s)$ and of a Fujita-type exponent $F(\la,s)$ such that the following holds:
\begin{itemize}
\item  Let  $p>p_+(\la,s)$. Then there are not any   non-negative  supersolutions.
\item Let $p<p_+(\la,s)$.Then there exist  local solutions while concerning global solutions we need to distinguish two cases:
\begin{itemize}
\item Let $ 1< p\le F(\la,s)$. Here we show that a
weighted norm of any positive solution blows up in finite time.
\item Let $F(\la,s)<p<p_+(\la,s)$. Here we prove the existence of
global solutions under suitable hypotheses.
\end{itemize}
\end{itemize}

\end{abstract}

\maketitle

\section{Introduction}\label{sec_introduction}
The main goal of this work is to analyse the existence or non-existence of positive global solutions to the non-local parabolic equation given by
\begin{equation}\label{prob_frac_heat_hardy}
\begin{cases}
(w_t-\Delta w)^s=\dfrac{\la}{|x|^{2s}} w+w^p+f, &\quad \text{ in } \R^N \times (0,\infty),\\
w(x,t)=0 &\quad \text{ in } \R^N \times (-\infty,0],
\end{cases}
\end{equation}
where $ s \in (0,1)$, $N\ge2$,  $\la \in (0,\Lambda_{N,s})$ (see \eqref{def_La}), $p>1$, and $f$ is a
non-negative function satisfying suitable integrability hypothesis.

The  operator $H^s (w):=(w_t-\Delta w)^s$ is the fractional heat
operator. In the literature there are many possible ways to define
$H^s$, see for example \cite{AT} for an overview. Among the papers
concerning the operators $H^s$ and its properties we cite
\cite{ST} for regularity results, a Harnack inequality, an
extension result and a monotonicity formula for solution to the
equation $ H^s (w)=0$. The more general case of smooth potentials
has been treated by \cite{BG}, where the authors obtain a
monotonicity formula by the means of a suitable H\"older
regularity results and an extension theorem.

In the presence of an Hardy-type potential, a  monotonicity
formula and the asymptotic behaviour near the
singular point $0$ was investigated in \cite{FPS_unique}. A
detailed analysis of local solutions to the extended problem has
been  carried out in \cite{AT}. Furthermore, a more general
extension theorem for a vast class of parabolic fractional
operators, including $H^s$, as been obtained in \cite{BCS}. We
will give a  precise definition of the operator $H^s$  by means of
the  Fourier transform in  Section \ref{sec_functional_settings}.

The study of existence of global solutions to problem
\eqref{prob_frac_heat_hardy}, is related to the study of the  so
called Fujita-type exponent, see \cite{F}. Furthermore the
presence of an Hardy-type potential obstructs  even the existence
of local solutions for large $p$. Before stating our main results,
let us make a brief overview of the literature concerning Fujita
exponents and  non-existence results when there is a Hardy-type
potential.

In the semi-linear case, $s=1$ and $\la=0$,  Fujita proved, in his
fundamental work \cite{F}, the existence of a critical
exponent $F=1+\frac{2}{N}$ such that if $1<p<F$, then any solution
to the semi-linear problem
\begin{equation}\label{Fujita}\left\{
\begin{array}{l}
u_t=\Delta u+u^p,\, x\in\R^n,\, t>0,\\ u(x,0)=u_0(x)\geq
0,\,x\in\R^n,
\end{array}\right.
\end{equation}
blows up in finite time. The critical case $p=F$ was considered
later in \cite{W}. It is proved that in that case, a suitable norm
of the solution goes to infinity in a finite time.

In the case of fractional diffusion, the author in \cite{SS}
considered the equation
\begin{equation}\label{nolocal}
\left\{
\begin{array}{rcll}
 u_t+(-\Delta)^{s} u&=& u^p &\text{ in } \R^N\times (0,T),\\
u(x,0)&=& u_0(x) &\text{ if }x\in\R^N,
\end{array}
\right.
\end{equation}
where $N> 2s$ and $0<s<1$.  Notice that $(-\Delta)^s$ is the
fractional Laplacian defined by
\begin{equation}\label{fraccionario}
(-\Delta)^{s}u(x):=a_{N,s}\text{ P.V.}\int_{\mathbb{R}^{N}}{\frac{u(x)-u(y)}{|x-y|^{N+2s}}\, dy},\,s\in(0,1),
\end{equation}
where
$$a_{N,s}=2^{2s-1}\pi^{-\frac N2}\frac{\Gamma(\frac{N+2s}{2})}{|\Gamma(-s)|}$$
is the normalization constant. See \cite{DPV},
\cite{FLS}, \cite{La_book} and the references therein for additional properties of
the fractional Laplacian. In this case the Fujita critical
exponent takes the form $F(s)=1+\frac{2s}{N}$.

Now, in the case $\la>0$, the problem is related to the following Hardy-Leray type
inequality:
\begin{theorem}\label{DH}
For all $\varphi\in \mathcal{C}^{\infty}_{0}(\R^n)$, we have
\begin{equation}\label{ineq_Hardy_frac_Herbst}
\Lambda_{N,s}\,\int_{\R^N} \frac{\varphi^2(x)}{|x|^{2s}}\,dx\le
\int_{\R^N}\int_{\R^N}
\frac{(\varphi(x)-\varphi(y))^2}{|x-y|^{N+2s}}dxdy,
\end{equation}
with
\begin{equation}\label{def_La}
\Lambda_{N,s}=2^{2s}\dfrac{\Gamma^2(\frac{N+2s}{4})}{\Gamma^2(\frac{N-2s}{4})}.
\end{equation}
The constant $\Lambda_{N,s}$ is optimal and is not attained.
Moreover, $\Lambda_{N,s}\to
\Lambda_{N,1}:=\left(\dfrac{N-2}{2}\right)^2$, the classical Hardy
constant, when $s$ tends to $1$.
\end{theorem}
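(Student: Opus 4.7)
The plan is to establish \eqref{ineq_Hardy_frac_Herbst} via the ground-state substitution technique (of Frank--Seiringer type). The starting ingredient is the formal virtual ground state $w_0(x):=|x|^{-(N-2s)/2}$, which satisfies the distributional identity
\begin{equation*}
(-\Delta)^s w_0 \;=\; \Lambda_{N,s}\,|x|^{-2s}\,w_0 \qquad \text{in }\R^N\setminus\{0\},
\end{equation*}
with $\Lambda_{N,s}$ exactly the constant in \eqref{def_La}. Combined with the Fourier identity
\begin{equation*}
\iint_{\R^N\times\R^N}\!\frac{(\varphi(x)-\varphi(y))^2}{|x-y|^{N+2s}}\,dx\,dy \;=\; \frac{2}{a_{N,s}}\!\int_{\R^N}\!|\xi|^{2s}|\hat\varphi(\xi)|^2\,d\xi,
\end{equation*}
this motivates writing $\varphi(x)=w_0(x)\,u(x)$ with $u\in C_c^\infty(\R^N\setminus\{0\})$ and symmetrically expanding the Gagliardo double integral to obtain the ground-state representation
\begin{equation*}
\iint\frac{(\varphi(x)-\varphi(y))^2}{|x-y|^{N+2s}}\,dx\,dy
\;=\; \Lambda_{N,s}\!\int_{\R^N}\!\frac{\varphi^2}{|x|^{2s}}\,dx
\;+\; \iint\! \frac{(u(x)-u(y))^2}{|x-y|^{N+2s}}\,w_0(x)w_0(y)\,dx\,dy.
\end{equation*}
Since the remainder is manifestly non-negative, this yields \eqref{ineq_Hardy_frac_Herbst} on $C_c^\infty(\R^N\setminus\{0\})$, and then on all of $C_c^\infty(\R^N)$ because $\{0\}$ has zero $s$-capacity when $N>2s$.

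\textbf{Sharpness, non-attainment and the classical limit.} Both sharpness and non-attainment read off directly from the representation: optimizing along a sequence $u_n\to 1$ truncated at the origin and at infinity so that $w_0 u_n$ is admissible in the energy space, the Rayleigh-type quotient tends to $\Lambda_{N,s}$; equality would force $u$ constant, whence $\varphi\propto w_0$, but $w_0$ has infinite Gagliardo seminorm, so the infimum is not attained. The limit $\Lambda_{N,s}\to ((N-2)/2)^2$ as $s\to 1^-$ is a continuity statement reducing to standard Gamma-function identities (Euler reflection and Legendre duplication applied to the ratio in \eqref{def_La}).

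\textbf{Main obstacle.} The principal difficulty is the explicit computation of the eigenvalue-type constant in the identity for $w_0$ and identifying it with the Gamma quotient in \eqref{def_La}. A clean route is through the Mellin transform in the radial variable: after reducing by Riesz's rearrangement inequality to radial nonincreasing $\varphi$, the double integral on the right of \eqref{ineq_Hardy_frac_Herbst} becomes a one-dimensional weighted quadratic form whose sharp constant is expressible as a Beta integral evaluating to the stated Gamma quotient. An alternative path is a direct computation of $(-\Delta)^s|x|^{-(N-2s)/2}$ via the Riesz potential and the beta-function, which pins down $\Lambda_{N,s}$ with the same outcome. In both routes, care is needed in interpreting the pointwise P.V.\ integrals near the origin, which is handled by the density argument alluded to above.
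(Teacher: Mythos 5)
The paper does not prove Theorem~\ref{DH}; immediately after the statement it cites Herbst~\cite{He_spectral} and the book of Peral--Soria~\cite{PS_book}, so there is no in-paper argument to compare against. Your sketch is essentially the standard Frank--Seiringer ground-state-representation route, which is also the approach taken in~\cite{FLS} (a reference used elsewhere in this paper, e.g.\ for Proposition~\ref{prop_mu}), and it is a legitimate alternative to Herbst's original Mellin/spherical-harmonics argument. The algebraic identity underlying your decomposition is $|\varphi(x)-\varphi(y)|^2-w_0(x)w_0(y)|u(x)-u(y)|^2=(w_0(x)-w_0(y))\bigl(w_0(x)u(x)^2-w_0(y)u(y)^2\bigr)$, which after Fubini and symmetrization produces $\tfrac{2}{a_{N,s}}\int w_0 u^2\,(-\Delta)^s w_0\,dx$; so the density step, the optimality and the non-attainment all follow exactly as you describe, and the only genuine analytic content is the computation of $(-\Delta)^s|x|^{-\alpha}$, which you rightly single out as the main obstacle.

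Two small points. First, track the normalizing constant $a_{N,s}$ from \eqref{fraccionario} carefully: with $(-\Delta)^s$ normalized as a Fourier multiplier, the ground-state identity places $\tfrac{2}{a_{N,s}}\Lambda_{N,s}$ (not bare $\Lambda_{N,s}$) in front of the Hardy term when the left-hand side is the unweighted Gagliardo double integral. The paper's own later use of Hardy in Proposition~\ref{prop_ineq_hardy} is the extension/Fourier form with constant $k_s\Lambda_{N,s}$, consistent with $\Lambda_{N,s}$ being the Fourier-side sharp constant; the Gagliardo-form statement in \eqref{ineq_Hardy_frac_Herbst} therefore carries a harmless normalization convention you should pin down rather than inherit blindly. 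Second, the limit $\Lambda_{N,s}\to\bigl(\tfrac{N-2}{2}\bigr)^2$ as $s\to1^-$ does not need Euler reflection or Legendre duplication: plugging $s=1$ into \eqref{def_La} gives $4\,\Gamma^2(\tfrac{N+2}{4})/\Gamma^2(\tfrac{N-2}{4})$, and a single application of $\Gamma(z+1)=z\Gamma(z)$ with $z=\tfrac{N-2}{4}$ reduces this to $4\cdot\bigl(\tfrac{N-2}{4}\bigr)^2=\bigl(\tfrac{N-2}{2}\bigr)^2$.
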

This inequality was first proved in  \cite{He_spectral}. See also
\cite{PS_book} for a direct proof.

The case $s=1$ and $\la>0$ was treated in \cite{APP}. More
precisely, if we consider the problem
\begin{equation}\label{Cauchyapp}
\begin{cases}
u_t-\Delta u=\lambda\dfrac{u}{|x|^{2}}+u^{p}, &\text{ in }\R^N\times(0,\infty),\\
u(x,0)=u_{0}(x), &\text{ in }\R^N,
\end{cases}
\end{equation}
setting
$\mu(\la)=\frac{N-2}{2}-\sqrt{\Big(\frac{N-2}{2}\Big)^2-\lambda}$,
then if $p\ge p_+(\la):=1+\frac{2}{\mu(\la)}$ there are no local positive supersolutions.

Due to the presence of the Hardy potential, then
it is possible to show that any positive solution of
\eqref{Cauchyapp} satisfies
$$
u(x,t)\ge C(t,r)|x|^{-\mu(\la)} \mbox{  in   }B_r(0), t>0.
$$
See for instance \cite{APP}. 

Furthermore  in the spirit of Fujita blow-up
exponent, it is proved in \cite{APP} that if
$1<p<1+\frac{2}{N-\mu(\la)}$, there exists $T^*>0$, independent
of the initial datum, such that the solution $u$ to equation
\eqref{Cauchyapp} satisfies
\begin{equation}\label{blow-up-local}
\lim\limits_{t\to
T^{*}}\int_{B_{r}(0)}|x|^{-\mu(\la)}u(x,t)\,dx=\iy,
\end{equation}
for any ball $B_{r}(0)$.  Finally if
$1+\dfrac{2}{N-\mu(\la)}<p<1+\dfrac{2}{\mu(\la)}$, if the initial
datum is small enough, there exists a global solution to
\eqref{Cauchyapp}.

Hence we can define the Fujita type exponent in this case by
setting $F(\lambda)= 1+\dfrac{2}{N-\mu(\lambda)}$.  Under
fractional diffusion the equation \eqref{Cauchyapp} takes the form
\begin{equation}\label{Cauchys}
\begin{cases}
    u_t+(-\Delta)^s u=\lambda\dfrac{u}{|x|^{2}}+u^{p}, &\text{ in }\R^N\times(0,\infty),\\
    u(x,0)=u_{0}(x), &\text{ in }\R^N.
\end{cases}
\end{equation}
In this case, it was proved in \cite{APP_Fujita} that
 if $p\ge
\tilde{p}_+(\la,s):=1+\frac{2s}{\mu(\la)}$, there are no local
positive supersolutions, while the Fujita critical exponent is
given by $\tilde{F}(\lambda,s)= 1+\dfrac{2s}{N-\mu(\lambda)}$
where
\begin{equation}\label{def_mu_la}
\mu(\lambda)= \dfrac{N-2s}{2}-\alpha_{\lambda}
\end{equation}
and
$\alpha_{\lambda}$ is defined by the implicit formula
$$
\Upsilon(\alpha_{\lambda})=\dfrac{2^{2s}\,\Gamma(\frac{N+2s+2\alpha_{\lambda}}{4})\Gamma(\frac{N+2s-2\alpha_{\lambda}}{4})}{\Gamma(\frac{N-2s+2\alpha_{\lambda}}{4})\Gamma(\frac{N-2s-2\alpha_{\lambda}}{4})},
$$
see also Proposition \ref{prop_mu}.

Going back to the operator $H^s$, in the case $\la=0$, the author in \cite{T_pointwise,T_frac_heat} considers the problem
\begin{equation}\label{one}
\begin{cases}
0\le(w_t-\Delta w)^s\le w^p, &\quad \text{ in } \R^N \times (0,\infty),\\
w(x,t)=0, &\quad \text{ in } \R^N \times (-\infty,0].
\end{cases}
\end{equation}
Using an integral representation formula for the inverse of $H^s$ he was able to show the existence of a
critical Fujita exponent given by $F(s)=1+\frac{2s}{N+2-2s}$. The proof is deeply based on some convolution properties of Gaussian
functions.

The situation when $\la>0$ is more delicate. The argument used in \cite{T_frac_heat} seems to be directly not applicable in our
situation. Our approach is instead based on the extension result   proved   for fractional powers of the heat operator  in \cite{BCS}.

Let us define the critical non-existence exponent $p_+(\la,s)$ and the Fujita exponent $F(\la,s)$ respectively as:
\begin{equation}\label{def_p+las_Flas}
p_+(\la,s):=1+\frac{2s}{\mu(\lambda)} \quad \text{ and } F(\la,s):=1+\frac{2s}{N+2-2s-\mu(\la)},
\end{equation}
with $\mu(\la)$ as in \eqref{def_mu_la}.  We distinguish  different cases according to the value of  the exponent $p$. Our main results are the following.
\begin{itemize}
\item  The case  $p>p_+(\la,s)$. Here, as in the elliptic case,
using the local behaviour of the solution near the origin obtained
in Proposition \ref{prop_estimate_singularity}, and thanks to a
Picone-type inequality (see Proposition \ref{prop_Picone} below),
we are able to show that there are not any weak non-trivial
solution  to problem \eqref{prob_frac_heat_hardy}.  In particular
if $f\not \equiv 0$ there are not any non-negative weak
supersolutions. \item The case $p<p_+(\la,s)$. Here it is possible
to show the existence of local solutions, see Remark
\ref{reamrk_local}. On the other hand, concerning global solutions
we distinguish two cases:
\begin{itemize}
\item The case $F(\la,s)<p<p_+(\la,s)$. Here we prove the
existence of a very weak supersolution. Thanks to
Proposition \ref{prop_existence_very_weak_solution}, we get the
existence of a very weak solution to equation
\eqref{prob_frac_heat_hardy}. \item The case $ 1< p\le F(\la,s)$.
Here we show that any positive weak or very weak solution of
\eqref{prob_frac_heat_hardy}  blows up, in a suitable sense, in
finite time. See Theorem \ref{theor_Fujita}.
 \end{itemize}
\end{itemize}

We remark that our result are coherent with \cite{T_frac_heat}. Indeed $\mu(\la) \to 0^+$, $p_+(\la,s)\to \infty$ and $F(\la,s)\to F(s)$ as $\la \to 0^+$, where $F(s)$ is the Fujita exponent of problem \eqref{one}. Furthermore the technique that we  used  to
prove  Theorem \ref{theor_Fujita} can be easily adapted to the case $\la=0$,  see Remark \ref{remark_la=0} and also  \cite{T_frac_heat}.

It is also interesting to notice that the critical non-existence
exponent $p_+(\la,s)$ is the same for problems \eqref{Cauchys} and
\eqref{prob_frac_heat_hardy} and that $p_+(\la,s) \to p_+(\la)$ as
$s \to 1^+$, where $ p_+(\la)$ is  the critical non-existence exponent
of problem \eqref{Cauchyapp}. This is due to the fact the exponent
$p_+(\la,s)$ is determined only by the elliptic part of the
operator and the presence of the Hardy-type
potential, which induces a singular behaviour near $0$ for any
non-negative supersolutions.

On the other hand the Fujita exponents $F(\la,s)$ and
$\tilde{F}(\la,s)$ of problems  \eqref{Cauchys}  and
\eqref{prob_frac_heat_hardy} are different and in particular
$F(\la,s) <\tilde{F}(\la,s)$ for any $s \in (0,1)$ and $\la \in
(0,\Lambda_{N,s})$. Nevertheless we have that  $F(\la,s) \to
F(\la)$ and $\tilde{F}(\la,s) \to F(\la)$  as $s \to 1^-$,  where
$F(\la)$  is the Fujita exponent of  \eqref{Cauchyapp}.

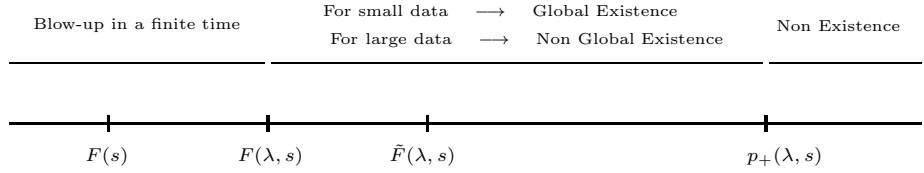
\begin{figure}
	\begin{center}
		\setlength{\unitlength}{1mm}
		\begin{picture}(120,20)
			\put(0,10){\line(1,0){33.5}}
			\put(34.5,10){\line(1,0){64.5}}
			\put(100,10){\line(1,0){20}}
			\thicklines
			\put(0,2){\line(1,0){120}}
			\put(13,1){\line(0,1){2}}
			\put(34,1){\line(0,1){2}}
			\put(55,1){\line(0,1){2}}
			\put(99.5,1){\line(0,1){2}}
			\put(16.75,15){\makebox(0,0){\tiny Blow-up in a finite time}}
			\put(65,17){\makebox(0,0){\tiny For small data
					$\quad\longrightarrow\quad$ Global Existence }}
			\put(68,13){\makebox(0,0){\tiny For large data
					$\quad\longrightarrow\quad$ Non Global Existence}}
			\put(109,15){\makebox(0,0){\tiny Non Existence}}
			\put(10,-3){\scriptsize $F(s)$}
			\put(30,-3){\scriptsize $F(\la,s)$}
			\put(50,-3){\scriptsize $\tilde{F}(\la,s)$}
			\put(97,-3){\scriptsize $p_+(\lambda,s)$}
		\end{picture}
	\end{center}
\caption{Fujita exponent for the fractional heat equation with Hardy potential.}
\end{figure}

This paper is organized as follows. In the Section
\ref{sec_functional_settings_and_preliminaries} we introduce some
definitions and useful tools about the fractional power of the
heat equation, the sense for which solution are defined, some
representation formulas and preliminary results.

In Section \ref{section_ground_state} we consider the operator
$H^s$ perturbed by the Hardy potential and prove  ground state
representation formula for $H^s(\cdot)-\la\frac{\cdot}{|x|^{2s}}$.
In Section \ref{section_comparison} we obtain a comparison
principle for weak solutions using the extension procedure
developed in \cite{BCS}. As a consequence, we show the existence
of weak solutions in a suitable sense of problem
\ref{prob_frac_heat_hardy}, if there exists a supersolution of the
same problem. In Section \ref{sec_local_weak_sol}, we analyse the
local behaviour of weak supersolutions. In particular  we are able
to obtain pointwise estimates from below and a strong maximum
principle as consequence.

Finally our main results are proved in Section
\ref{section_main_results}, while in Section
\ref{sec_open_problems}, we discuss some remaining  open problems
and possible further developments.

\section{Preliminaries}\label{sec_functional_settings_and_preliminaries}
\subsection{Functional setting} \label{sec_functional_settings}
For any real Hilbert space $X$ we denote with $X^*$ its
dual space and with $\df{X^*}{\cdot}{\cdot}{X}$ the duality between $X^*$ and $X$; $\ps{X}{\cdot}{\cdot}$ denotes the scalar product in $X$.

We define the operator  $H^s$  by  means of the Fourier transform as follows:
\begin{equation}\label{def_Hs_fourier}
\widehat{ H^s (w)}(\xi,\theta):=(i\theta+|\xi|^2)^s\widehat{w}(\xi,\theta),
\end{equation}
where the Fourier transform of $w$ is defined as
\begin{equation*}
\mathcal{F}(w)(\xi,\theta)=\widehat{w}(\xi,\theta):=\int_{\R^{N+1}}e^{-i(x \cdot\xi+t \theta )}w(x,t) \, dx\,dt.
\end{equation*}
Then the natural domain for a pointwise definition of $H^s$ is the set
\begin{equation*}
\left\{w \in L^2(\R^{N+1}):|i\theta+|\xi|^2|^s\widehat{w} \in L^2(\R^{N+1})\right\}.
\end{equation*}
Furthermore, if $\phi \in \mc{S}(\R^N)$, then by  \cite[Theorem
1.1]{ST}, we have the following pointwise formula for   the
operator $H^s$
\begin{equation}\label{def_Hs_pointwise}
H^s\phi(x,t)=
\frac{1}{(4\pi)^{\frac{N}{2}}\Gamma(-s)}\int_{-\infty}^t\int_{\R^N}\frac{\phi(x,t)-\phi(y,\sigma)}{(t-\sigma)^{\frac{N}{2}+s+1}}
e^{-\frac{|x-y|^2}{4(t-\sigma)}}dyd\sigma.
\end{equation}
Note that we have stated the previous formula for $\phi \in \mc{S}(\R^N)$ for the sake of simplicity,
see \cite[Theorem 1.1]{ST} for less restrictive regularity assumption on $\phi$.

We can  extend $H^s$  on
\begin{equation*}
\mathop{\rm{Dom}}(H^s):=\left\{w \in L^2(\R^{N+1}):\int_{\R^{N+1}}|i\theta+|\xi|^2|^s|\widehat{w}(\xi,\theta)|^2 \,d\xi\, d\theta<+\infty\right\},
\end{equation*}
endowed with the norm
\begin{equation*}
\norm{w}_{\mathop{\rm{Dom}}(H^s)}:=\left(\int_{\R^{N+1}} w^2(x,t) \,dx\,dt\right)^{\frac12}+\left(\int_{\R^{N+1}}|i\theta+|\xi|^2|^{s}|\widehat{w}(\xi,\theta)|^2d\xi \,d\theta\right)^{\frac12},
\end{equation*}
as the map from $\mathop{\rm{Dom}}(H^s)$ into its dual space $(\mathop{\rm{Dom}}(H^s))^*$, defined as
\begin{equation}\label{def_operator_Hs}
\df{(\mathop{\rm{Dom}}(H^s))^*}{H^s(w)}{v}{\mathop{\rm{Dom}}(H^s)}:=\int_{\R^{N+1}}(i\theta+|\xi|^2)^s\widehat{w}(\xi,\theta)\overline{\widehat{v}(\xi,\theta)} d \xi d\eta,
\end{equation}
for any $w, v \in \mathop{\rm{Dom}}(H^s)$.
We remark that $\mathop{\rm{Dom}}(H^s)$ coincides with the set of pointwise definition for the operator $H^{\frac{s}{2}}$.

The space $\mathop{\rm{Dom}}(H^s)$ can be characterized more explicitly. Let
\begin{equation}\label{deff_Xs}
X^s:=L^2(\R,W^{s,2}(\R^N)) \cap L^2(\R^N, W^{\frac{s}{2},2}(\R)),
\end{equation}
where we are identifying a  function $u:\R^{N+1}\to \R$ with variables $(x,t) \in \R^{N}\times \R$ with the map  $t \mapsto u(\cdot,t)$ or with the map  $x \mapsto u(x,\cdot)$ respectively.
We can endow $X^s$ with the natural norm
\begin{multline}\label{def_norm_Xs}
\norm{\phi}_{X^s}:=\norm{\phi}_{L^2(\R,W^{s,2}(\R^N))}+\norm{\phi}_{L^2(\R^N, W^{\frac{s}{2},2}(\R))}\\
=\left(\int_{\R^{N+1}}|\xi|^{2s}|\mc{F}_x(w)(\xi,t)|^2 d \xi dt\right)^\frac12+
\left(\int_{\R^{N+1}}|\theta|^s|\mc{F}_t(w)(x,\theta)|^2dx d\theta\right)^\frac12,
\end{multline}
where $\mc{F}_x$ and $\mc{F}_t$ denote the Fourier transforms in the variables $x$ and $t$ respectively.

\begin{proposition}\label{prop_DomHs_Xs}
For any $s \in (0,1)$,
\begin{equation}\label{eq_Xs_domHs}
X^s=\mathop{\rm{Dom}}(H^s)
\end{equation}
with equivalent norms. In particular the natural embedding
\begin{equation}\label{domHs_inclusion_L2Hs}
\mathop{\rm{Dom}}(H^s) \hookrightarrow L^2(\R,W^{s,2}(\R^N))
\end{equation}
is linear and continuous.
\end{proposition}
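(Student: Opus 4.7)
The plan is to reduce everything to a pointwise comparison between the symbol $|i\theta+|\xi|^2|^s$ of $H^s$ and the sum of the symbols $|\theta|^s$ and $|\xi|^{2s}$ of the half-heat operator in $t$ and the fractional Laplacian in $x$, then transport this by Plancherel.

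First I would observe that, since $i\theta+|\xi|^2$ is a complex number with real part $|\xi|^2$ and imaginary part $\theta$,
\begin{equation*}
|i\theta+|\xi|^2|^s = \bigl(\theta^2+|\xi|^4\bigr)^{s/2}.
\end{equation*}
For $\alpha=s/2 \in (0,1/2)$ and $a,b\geq 0$ one has the elementary two-sided inequality
\begin{equation*}
\tfrac12\bigl(a^{\alpha}+b^{\alpha}\bigr)\leq (a+b)^{\alpha}\leq a^{\alpha}+b^{\alpha},
\end{equation*}
the upper bound being subadditivity of $t\mapsto t^\alpha$ and the lower bound coming from $(a+b)^\alpha\geq\max(a^\alpha,b^\alpha)$. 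Applying this with $a=\theta^2$ and $b=|\xi|^4$ gives
\begin{equation*}
\tfrac12\bigl(|\theta|^{s}+|\xi|^{2s}\bigr)\leq |i\theta+|\xi|^2|^{s}\leq |\theta|^{s}+|\xi|^{2s}.
\end{equation*}

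Next, I would multiply this chain by $|\widehat{w}(\xi,\theta)|^2$ and integrate over $\R^{N+1}$, which shows that $w\in\mathop{\rm{Dom}}(H^s)$ if and only if the two integrals
\begin{equation*}
\int_{\R^{N+1}}|\xi|^{2s}|\widehat{w}(\xi,\theta)|^2 d\xi d\theta \quad \text{and}\quad \int_{\R^{N+1}}|\theta|^{s}|\widehat{w}(\xi,\theta)|^2 d\xi d\theta
\end{equation*}
are simultaneously finite, with comparability of the corresponding quantities. To identify these two integrals with the norms defining $X^s$, I would apply Plancherel in a single variable: with $\mc{F}_x$ denoting the partial Fourier transform in $x$, Plancherel in the $t$–variable yields, for a.e.\ $\xi$,
\begin{equation*}
\int_{\R}|\mc{F}_x(w)(\xi,t)|^2 dt=\con\int_{\R}|\widehat{w}(\xi,\theta)|^2 d\theta,
\end{equation*}
hence multiplying by $|\xi|^{2s}$ and integrating in $\xi$ recovers the seminorm $\|w\|_{L^2(\R,\dot W^{s,2}(\R^N))}$. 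The symmetric argument with $\mc{F}_t$ recovers $\|w\|_{L^2(\R^N,\dot W^{s/2,2}(\R))}$. Combined with the fact that the $L^2$ part of both sides coincides with $\|w\|_{L^2(\R^{N+1})}$, this gives the equivalence of norms \eqref{eq_Xs_domHs}.

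Finally, the continuous embedding \eqref{domHs_inclusion_L2Hs} is immediate: $\|w\|_{L^2(\R,W^{s,2}(\R^N))}$ is one of the two summands in $\|w\|_{X^s}$, which by the equivalence just established is controlled by $\|w\|_{\mathop{\rm{Dom}}(H^s)}$. There is no real obstacle in this proof; the only point that requires a line of care is ensuring that the Fourier-side characterization of $W^{s,2}(\R^N)$ (and of $W^{s/2,2}(\R)$) is used in the form involving $|\xi|^{2s}|\widehat u|^2$, which is standard and uses only that Sobolev–Slobodeckij seminorms are equivalent to Gagliardo seminorms in Fourier.
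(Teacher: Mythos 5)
Your argument is correct and is essentially the same as the paper's: establish pointwise two-sided comparability between $|i\theta+|\xi|^2|^s$ and $|\theta|^s+|\xi|^{2s}$, then carry this over by Plancherel. The only cosmetic difference is that you apply subadditivity of $t\mapsto t^{s/2}$ to $(\theta^2,|\xi|^4)$ to get the upper bound with explicit constant $1$, whereas the paper applies $(a+b)^s\leq C(a^s+b^s)$ to $(|\theta|,|\xi|^2)$ with an unspecified constant; the underlying idea is identical.
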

\begin{proof}
It is clear that  $|\xi|^{2s}\le |i\theta+|\xi|^2|^{s}$  and  $|\theta|^{s}\le |i\theta+|\xi|^2|^{s}$ for any $(\theta,\xi) \in \R^{N+1}$.
Furthermore it is easy to see  that there exists a constant $C>0$, depending on $s$, such that for any $a,b \in [0,+\infty)$
\begin{equation}\label{ineq_abs}
(a+b)^s \le C (a^s +b^s).
\end{equation}
Indeed the function $f(\tau):=\frac{(\tau +1)^s}{1+\tau^s}$  is bounded in $[0,+\infty)$.
Hence  $|i\theta+|\xi|^2|^{s}\le C (|\theta|^s +|\xi|^{2s})$ for any $(\theta,\xi) \in \R^{N+1}$. Then \eqref{eq_Xs_domHs} follows from the Plancherel identity.
\end{proof}

We also have an inversion formula for the operator $H^s$.
\begin{definition}
Let $q \in [1,2]$ and $g \in L^q(\R^N \times
(-\infty, T))$, for any $T \in \R$, be fixed.  We define the
operator
\begin{equation}\label{def_Js}
J_s g (x,t):=\frac{1}{\Gamma(s)(4\pi)^{\frac{N}{2}}}\int_0^\infty\int_{\R^N}  \frac{e^{-\frac{|z|^2}{4\tau}}}{\tau^{\frac{N}{2}+1-s}} g(x-z,t-\tau)\, dz \, d\tau.
\end{equation}
\end{definition}
By \cite{T_pointwise},  the operator $J_s$ has the following properties:
\begin{align}
&H^s(\phi)=g   \text{ if and only if }   J_s(g)=\phi, \quad \text { for any } \phi \in \mc{S}(\R^{N+1}),\label{Js_properties_inversion}\\
&J_s (g) \in L^q_{loc}(\R^{N+1}),\label{Js_properties_integrability} \\
&J_s= J_\alpha J_\beta\text{ for any } \alpha, \beta>0 \text { such that } \alpha+\beta=s.\label{Js_properties_compoisiton}\\
&\text{ If } g \text{ is non negative, then } J_s(g) =0 \text{ in
} (-\infty,0) \text { if and only if } g=0 \text{ in }
(-\infty,0). \label{Js_properties_intial_datum}
\end{align}
In view of  \eqref{Js_properties_inversion}, we say that   $J_s$ is the inverse of the operator $H^s$.

In the spirit of  \cite{T_pointwise} and  \cite{T_frac_heat} it makes sense to give the following definitions for weak supersolution, subsolutions and solutions of the problem
\begin{equation}\label{prob_general}
\begin{cases}
H^s(w)=g &\text{  in  }\R^N\times (0,+\infty),\\
w=0 &\text{  in  }\R^N\times (-\infty,0].
\end{cases}
\end{equation}

\begin{definition}\label{def_very_weak_solutions}
Let $q \in [1,2]$,  $g \in L^q(\R^N \times (-\infty, T))$ for any
$T \in \R$ and suppose that $g(t,x)=0$ a.e. in $(-\infty,0)\times
\R^{N}$. Then,  we say that $w$ is a very weak
supersolution (subsolution)  of \eqref{prob_general} if $w \in
L^q_{loc}(\R^{N+1})$ and
\begin{equation}\label{eq_very_weak_solution}
w\ge (\le) \, J_s(g) \text{  a.e. in  }\R^{N+1}.
\end{equation}
If $w$ is both a supersolution and a subsolution of
\eqref{prob_general}, then, we say that $w$ is a
solution of \eqref{prob_general}.
\end{definition}
The  definition above already encodes that condition $w=0$ in
$\R^N\times (-\infty,0]$ by \eqref{Js_properties_intial_datum}.

We can also define energy solutions to \eqref{prob_general} for a  suitable class of  data $g$.

\begin{definition}\label{def_weak_solutions}
Let  $g \in L^1_{loc}(\R^{N+1})$ be non-negative and suppose that $g(t,x)=0$ a.e. in $(-\infty,0)\times \R^{N}$.
Assume that
\begin{equation}
\phi \mapsto \int_{0}^{\infty}\left(\int_{\R^N}g \phi\, dx \right) dt
\end{equation}
belongs to $(\mathop{\rm{Dom}}(H^s))^*$.
Then we say that $w$ is   a energy or  weak  supersolution (subsolution) of \eqref{prob_general} if $w \in \mathop{\rm{Dom}}(H^s)$,
\begin{equation}\label{eq_weak_solution}
\sideset{_{(\mathop{\rm{Dom}}(H^s))^*}}{_{\mathop{\rm{Dom}}(H^s)}}{\mathop{\left\langle H^s(w), \phi\right\rangle}}
\ge(\le)\int_{0}^{\infty}\left(\int_{\R^N}g \phi\, dx \right) dt, \\
\end{equation}
for any non-negative $\phi \in \mc{S}(\R^{N+1})$ and  $w=0$  in
$\R^N\times (-\infty,0]$. If $w$ is a supersolution and a
subsolution of  the equation in \eqref{prob_general}, we say that
$w$ is a solution  of the equation \eqref{prob_general}.
\end{definition}

To deal with   \eqref{prob_frac_heat_hardy} we recall the
following  fractional Sobolev inequality (see  \cite[Theorem
6.5]{DPV})
\begin{equation}\label{ineq_Sob_frac}
\left(\int_{\R^N} |\phi|^\frac{2N}{N-2s} \, dx \right)^\frac{N-2s}{N} \le S_{N,s} \int_{\R^N}\int_{\R^N} \frac{|\phi(x)-\phi(y)|^2}{|x-y|^{N+2s}} \, dx dy
\quad \text { for any } \phi \in C^{\infty}_c(\R^N).
\end{equation}

Let $f \in L^2(\R^{N+1})$ and suppose that $f(t,x)=0$ a.e. in $(-\infty,0)\times \R^{N}$.  Let  $w \in \mathop{\rm{Dom}}(H^s)$ be such  that
\begin{equation}\label{hp_int_w}
\int_{0}^{+\infty}\left(\int_{\R^N}|w|^{\frac{2Np}{N+2s}}\, dx \right)^{\frac{N+2s}{N}} \, dt <+\infty \quad \text{ or } \quad  w \in L^{2p}(\R^{N+1}),
\end{equation}
$w=0$  in  $\R^N\times (-\infty,0]$, and
\begin{equation}\label{eq_frac_heat_hardy_weak}
\sideset{_{(\mathop{\rm{Dom}}(H^s))^*}}{_{\mathop{\rm{Dom}}(H^s)}}{\mathop{\left\langle H^s(w), \phi\right\rangle}}
\ge(\le)\int_{0}^{\infty}\left(\int_{\R^N}\left(\frac{\la}{|x|^{2s}} w\phi +w^p\phi +f \phi\right)\, dx \right) dt, \\
\end{equation}
for any non-negative $\phi \in \mc{S}(\R^{N+1})$. Then $w$ is   a
weak  supersolution (subsolution) of problem
\eqref{prob_frac_heat_hardy}. Indeed  in view of \eqref{ineq_Hardy_frac_Herbst},
\eqref{domHs_inclusion_L2Hs}, \eqref{ineq_Sob_frac},
\eqref{hp_int_w},  and the H\"older
inequality, the right hand side of
\eqref{eq_frac_heat_hardy_weak}, as a function of $\phi$, belongs
to  $({\rm{Dom}}(H^s))^*$.

\subsection{Properties of the operators $J_s$ and $H^s$}\label{subsec_prop_Js_Hs}
In this section we prove some preliminary results about the operators $J_s$ and $H^s$.
\begin{lemma}\label{lemma_Jsg_in L^2}
Suppose that  $ g \in L^1(\R^{N+1})\cap L^2(\R^{N+1})$. Then $J_s(g) \in L^2(\R^{N+1})$.
\end{lemma}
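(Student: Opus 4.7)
The plan is to exploit the convolution structure $J_s g = K_s * g$, where
\[
K_s(z,\tau) := \frac{\tau^{s-1}}{\Gamma(s)(4\pi\tau)^{N/2}} e^{-|z|^2/(4\tau)} \mathbf{1}_{\{\tau>0\}},
\]
and work on the Fourier side. Using $\int_0^\infty \tau^{s-1} e^{-\tau\alpha} d\tau = \Gamma(s)\alpha^{-s}$ (valid for $\mathrm{Re}\,\alpha>0$) together with the Fourier transform of the Gaussian heat kernel, a direct computation yields $\widehat{K_s}(\xi,\theta) = (i\theta + |\xi|^2)^{-s}$, consistently with \eqref{def_Hs_fourier} and the inversion property \eqref{Js_properties_inversion}. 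Hence, for Schwartz $g$, Plancherel's identity gives
\[
\|J_s g\|_{L^2(\R^{N+1})}^2 = (2\pi)^{-(N+1)} \int_{\R^{N+1}} (\theta^2 + |\xi|^4)^{-s} |\widehat{g}(\xi,\theta)|^2\, d\xi\, d\theta.
\]

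The task is then to bound this integral by $C(\|g\|_{L^1}^2 + \|g\|_{L^2}^2)$. I split $\R^{N+1} = A \cup A^c$ with $A := \{(\xi,\theta):\theta^2 + |\xi|^4 \ge 1\}$. On $A$, the multiplier $(\theta^2+|\xi|^4)^{-s}$ is bounded by $1$, so the contribution is controlled by $(2\pi)^{N+1}\|g\|_{L^2}^2$ via Plancherel. On $A^c$, the Riemann--Lebesgue bound $\|\widehat{g}\|_{L^\infty} \le \|g\|_{L^1}$ reduces the estimate to showing
\[
I := \int_{\theta^2 + |\xi|^4 < 1} (\theta^2 + |\xi|^4)^{-s}\, d\xi\, d\theta < \infty.
\]
Passing to spherical coordinates in $\xi$, substituting $u = |\xi|^2$, and introducing polar coordinates in $(u,\theta)$, the integral $I$ factors as $\int_0^1 \rho^{N/2 - 2s}\, d\rho$ times a finite angular factor. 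This is finite precisely because the standing hypotheses $N \ge 2$ and $s \in (0,1)$ imply $N/2 - 2s > -1$.

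To extend from $\mc{S}(\R^{N+1})$ to an arbitrary $g \in L^1 \cap L^2$, I approximate $g$ by $g_n \in \mc{S}(\R^{N+1})$ converging to $g$ in both $L^1$ and $L^2$. The linear estimate above applied to $g_n - g_m$ shows that $(J_s g_n)$ is a Cauchy sequence in $L^2(\R^{N+1})$. Extracting an a.e.\ convergent subsequence, dominated convergence applied to the explicit formula \eqref{def_Js} identifies the $L^2$-limit with $J_s g$. The main technical obstacle is precisely this last identification: since $K_s$ is not globally integrable on $\R^{N+1}$, Young's inequality does not apply directly and the Fourier identity $\widehat{J_s g} = (i\theta+|\xi|^2)^{-s}\widehat{g}$ cannot be invoked out of the box for $g \in L^1 \cap L^2$; the Schwartz approximation combined with the a priori bound is what bridges the gap.
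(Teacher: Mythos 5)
Your Fourier-side decomposition is correct and genuinely different from the paper's argument. The paper works entirely in physical space: it splits the kernel in the time variable as $Y = Y\chi_{(0,1)} + Y\chi_{(1,+\infty)}$, observes that the short-time piece lies in $L^1(\R^{N+1})$ and the long-time piece in $L^2(\R^{N+1})$, and applies Young's inequality twice — the short-time part is convolved with $g\in L^2$ and the long-time part with $g\in L^1$. This avoids the Fourier transform and, crucially, avoids any density argument since Young's inequality applies to $g$ directly. Your argument is the dual picture: the multiplier $|i\theta+|\xi|^2|^{-2s}$ is bounded on the high-frequency region $\{\theta^2+|\xi|^4\ge 1\}$ (handled by $\|g\|_{L^2}$ via Plancherel) and integrable on the low-frequency complement (handled by $\|\widehat g\|_{L^\infty}\le\|g\|_{L^1}$), with the low-frequency integral $I=\int_{\theta^2+|\xi|^4<1}(\theta^2+|\xi|^4)^{-s}\,d\xi\,d\theta$ reducing to $\int_0^1\rho^{N/2-2s}\,d\rho<\infty$ — the same convergence condition $N/2+1>2s$ that makes the paper's long-time piece square-integrable. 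Both routes use the same constraint $N\ge 2$, $s\in(0,1)$ in the same way; yours is conceptually clean, the paper's is more elementary.

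The one place where your writeup is not yet a complete proof is the identification step at the end. You correctly flag it as "the main technical obstacle," but the sentence about dominated convergence applied to \eqref{def_Js} does not supply a dominating function and does not, as stated, show that $J_s g_{n_k}(x,t)\to J_s g(x,t)$ for a.e.\ $(x,t)$. A clean fix is available, but it essentially reintroduces the paper's physical-space splitting: with $K_s\chi_{(0,1)}\in L^1$ and $K_s\chi_{(1,+\infty)}\in L^2$, Young's inequality gives $K_s\chi_{(0,1)}*(g_n-g)\to 0$ in $L^2$ and $K_s\chi_{(1,+\infty)}*(g_n-g)\to 0$ in $L^\infty$, hence $J_s g_n\to J_s g$ in $L^2_{\rm loc}$, which identifies the $L^2$-limit of the Cauchy sequence with $J_s g$. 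Once you need that splitting anyway, the paper's route applies it directly to $g$ and dispenses with the Fourier computation and the density step altogether — which is why it is shorter.
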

\begin{proof}
We proceed in the spirit of \cite{T_pointwise}. Let
\begin{equation}
Y(z,\tau):=\frac{1}{\Gamma(s)(4\pi)^{\frac{N}{2}}}
\frac{e^{-\frac{|z|^2}{4\tau}}}{\tau^{\frac{N}{2}+1-s}} \mbox{
   where  } \tau>0  \mbox{  and  }z\in \R^n.
\end{equation}
We can write $J_s(g)$ as
\begin{equation*}
J_s(g)= (Y\chi_{(0,1)}) \ast g +(Y\chi_{(1,+\infty)}) \ast g,
\end{equation*}
where for any measurable set $E\subseteq \R$
\begin{equation*}
\chi_E(t):=
\begin{cases}
1, &\text{ if } t  \in E,\\
0, &\text{ if } t  \not \in E.
\end{cases}
\end{equation*}
 By the proof of \cite[Theorem 2.1]{T_pointwise}, we know that  $Y\chi_{(0,1)} \in L^1(\R^{N+1})$ and so by the Young inequality $(Y\chi_{(0,1)}) \ast g  \in L^2(\R^{N+1})$,
since $g \in L^2(\R^{N+1})$. Furthermore, by the Young inequality
\begin{equation*}
    \norm{(Y\chi_{(1,+\infty)}) \ast h_n}_{L^2(\R^{N+1})} \le \norm{h_n}_{L^1(\R^{N+1})}\norm{Y\chi_{(1,+\infty)}}_{L^2(\R^{N+1})}
\end{equation*}
and by a change of variables
\begin{multline*}
    \norm{Y\chi_{(1,+\infty)}}_{L^2(\R^{N+1})} ^2= \frac{1}{(\Gamma(s))^2(4\pi)^{N}} \int_{1}^{+\infty} \int_{\R^N} \frac{e^{-\frac{|z|^2}{2\tau}}}{\tau^{N+2-2s}} \, dz d \tau \\
    =\frac{1}{(\Gamma(s))^2(4\pi)^{N}} \int_{1}^{+\infty} \tau^{-\frac{N}{2}-2+2s} \,d \tau \int_{\R^N}e^{-\frac{|z|^2}{2}} \, dz <+\infty,
\end{multline*}
since $N\ge2$ and $s \in (0,1)$. In conclusion $J_s(g) \in L^2(\R^{N+1})$.
\end{proof}

\begin{lemma}\label{lemma_Jsg_in DHs}
Let $ g \in L^2(\R^{N+1})$ and suppose that $J_s(g) \in L^2(\R^{N+1})$. Then
\begin{equation}\label{ineq_J_sg}
\int_{\R^{N+1}}|i\theta+|\xi|^2|^{2s}|\widehat{J_s(g)}(\xi,\theta)|^2d\xi \,d\theta <+\infty.
\end{equation}
In particular, $J_s(g) \in\mathop{\rm{Dom}}(H^s)$ and
$H^s(J_s(g))$ is well defined pointwise.
\end{lemma}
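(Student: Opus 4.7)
The plan is to reduce the lemma to establishing the Fourier multiplier identity
$$\widehat{J_s g}(\xi,\theta) = (i\theta+|\xi|^2)^{-s}\,\widehat g(\xi,\theta)\quad\text{a.e.\ in }\R^{N+1}.$$
Granted this, \eqref{ineq_J_sg} follows at once from Plancherel's identity:
$$\int_{\R^{N+1}}|i\theta+|\xi|^2|^{2s}|\widehat{J_s g}|^2\,d\xi\,d\theta = \int_{\R^{N+1}}|\widehat g|^2\,d\xi\,d\theta = \|g\|_{L^2(\R^{N+1})}^2<+\infty.$$
The membership $J_s g\in\mathop{\rm{Dom}}(H^s)$ then follows from the elementary bound $|i\theta+|\xi|^2|^s\le 1+|i\theta+|\xi|^2|^{2s}$ combined with $J_sg\in L^2(\R^{N+1})$, and the multiplier identity even yields $H^s(J_sg)=g$ in $L^2(\R^{N+1})$, so that $H^s(J_sg)$ is defined pointwise almost everywhere as an honest $L^2$ function rather than merely as an element of $(\mathop{\rm{Dom}}(H^s))^*$.

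To obtain the multiplier identity I would first treat $g\in C_c^\infty(\R^{N+1})$. Writing $Y(z,\tau)=\frac{\tau^{s-1}}{\Gamma(s)}K_\tau(z)$ with $K_\tau(z)=(4\pi\tau)^{-N/2}e^{-|z|^2/(4\tau)}$ the standard heat kernel, whose spatial Fourier transform is $e^{-\tau|\xi|^2}$, a direct space-time Fourier computation of $J_s g$ combined with the Laplace-type representation
$$(i\theta+|\xi|^2)^{-s}=\frac{1}{\Gamma(s)}\int_0^{+\infty}\tau^{s-1}e^{-\tau(i\theta+|\xi|^2)}\,d\tau,$$
valid for a.e.\ $(\xi,\theta)$ since $\mathrm{Re}(i\theta+|\xi|^2)=|\xi|^2\ge 0$, yields the identity. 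Since $\int_0^{+\infty}\int_{\R^N}Y(z,\tau)\,dz\,d\tau=+\infty$, Fubini's theorem does not apply in one shot, but the interchange of integrals is justified by splitting $\tau\in(0,R)\cup(R,+\infty)$: on the first interval one uses $\int_0^R\tau^{s-1}\,d\tau<\infty$, and on the second interval the compact support of $g$ confines the relevant integrand to a bounded region. An alternative route is to invoke the inversion property \eqref{Js_properties_inversion}, which for $\phi\in\mc{S}(\R^{N+1})$ gives $J_sH^s\phi=\phi$, hence the identity for $g=H^s\phi$.

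To extend the identity to a general $g\in L^2(\R^{N+1})$ with $J_sg\in L^2(\R^{N+1})$, I would approximate $g$ by a sequence $g_n\in C_c^\infty(\R^{N+1})$ with $g_n\to g$ in $L^2$; by Lemma \ref{lemma_Jsg_in L^2} and the previous step, the multiplier identity holds for every $g_n$, and $\widehat{g_n}\to\widehat g$ in $L^2$. The \emph{main obstacle} is the passage to the limit on the left-hand side: since the multiplier $(i\theta+|\xi|^2)^{-s}$ is unbounded near the origin, the operator $J_s$ is \emph{not} continuous from $L^2$ to $L^2$, so $J_sg_n$ need not converge to $J_sg$ in $L^2$. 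To get around this, I would test the identity against an arbitrary $\psi\in\mc{S}(\R^{N+1})$: the right-hand side $\int\widehat{g_n}(i\theta+|\xi|^2)^{-s}\psi$ converges to the analogous expression for $g$ by the $L^2$ convergence of $\widehat{g_n}$ paired with $(i\theta+|\xi|^2)^{-s}\psi\in L^2(\R^{N+1})$ (the singularity $(|\xi|^4+\theta^2)^{-s/2}$ at the origin being square-integrable in $\R^{N+1}$ for $N\ge 2$ and $s\in(0,1)$, as one checks by a polar-type change of variables), while the left-hand side is handled by Parseval together with dominated convergence $J_sg_n\to J_sg$ pointwise a.e., exploiting positivity of the kernel $Y$, the decomposition $g=g^+-g^-$, and the a.e.\ finiteness of $J_s|g|$ (a consequence of \eqref{Js_properties_integrability} applied to $g^\pm$). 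The arbitrariness of $\psi$ then yields the multiplier identity for $g$ and concludes the proof.
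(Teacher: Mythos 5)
Your approach is the same as the paper's: reduce the lemma to the Fourier multiplier identity $\widehat{J_s g}=(i\theta+|\xi|^2)^{-s}\widehat g$ by computing the Fourier transform of the kernel $Y$, and then conclude by Plancherel. The paper does this very tersely — it formally multiplies $\widehat Y\,\widehat g$ and evaluates $\widehat Y$ by an explicit computation, leaving the justification of the convolution theorem (for a kernel $Y\notin L^1(\R^{N+1})$) implicit — whereas you correctly flag and try to address the two subtleties it skips: the split of the $\tau$-integral to justify Fubini, and the passage from nice data to general $g\in L^2$ with $J_sg\in L^2$.

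The one soft spot is your limiting argument. You approximate $g$ by arbitrary $g_n\in C_c^\infty$ converging in $L^2$ and then invoke dominated convergence with $J_s|g|$ as the dominating function to get $J_sg_n\to J_sg$ a.e.; but this requires $|g_n|\le|g|$ a.e., which generic smooth approximations (e.g.\ mollifications) need not satisfy, and $L^2$ convergence alone does not give pointwise convergence of $J_sg_n$. The clean fix is to bypass $C_c^\infty$: establish the multiplier identity directly for $g\in L^1\cap L^2$ using the split $Y=Y\chi_{(0,1)}+Y\chi_{(1,\infty)}$ already used in Lemma \ref{lemma_Jsg_in L^2} (the first piece is in $L^1$, the second in $L^2$, and Young plus the $L^2\ast L^1$ and $L^2\ast L^2$ convolution theorems give $\widehat{Y\ast g}=\widehat Y\,\widehat g$); then approximate a general $g$ by the monotone truncations $g_n:=\big(g^+\wedge n-g^-\wedge n\big)\chi_{B_n}\in L^1\cap L^2$, which satisfy $|g_n|\le|g|$ and $g_n\to g$ a.e., so the dominated convergence step you envision goes through. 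With that repair the proof is complete and matches the paper's argument in essence, while being more rigorous about the points the paper glosses over.
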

\begin{proof}
Since
\begin{equation*}
\mathcal{F}(J_{\frac{s}{2}} g)=\frac{1}{\Gamma(s)(4\pi)^{\frac{N}{2}}}\mathcal{F}\left( \chi_{[0,+\infty)}(\tau)\frac{e^{-\frac{|z|^2}{4\tau}}}{\tau^{\frac{N}{2}+1-s}}\right)\widehat{g},
\end{equation*}
we need to compute the Fourier transform of   $\chi_{(0,+\infty)}(\tau) \frac{e^{-\frac{|z|^2}{4\tau}}}{\tau^{\frac{N}{2}+1-s}}$, where
\begin{equation*}
\chi_{[0,+\infty)}(\tau)=
\begin{cases}
1, &\text{ if } \tau \in (0,+\infty),\\
0, &\text{ if } \tau \in (-\infty,0).
\end{cases}
\end{equation*}
We make this computation  in details for the sake of completeness, see  \cite{T_pointwise} and the references within. The Fourier transform of Gaussian function  and a change of variables yield
\begin{multline}
\int_{\R^{N+1}}e^{-i(z\cdot \xi+\tau \theta)}\chi_{[0,+\infty)}(\tau) \frac{e^{-\frac{|z|^2}{4\tau}}}{\tau^{\frac{N}{2}+1-s}} \, dz d\tau
=\int_0^\infty \tau^{-\frac{N}{2}-1+s}e^{-i\tau \theta} \int_{\R^N} e^{-i z \cdot \xi} e^{-\frac{|z|^2}{4\tau}}  \, dz d\tau\\
=2^N \pi^{\frac{N}{2}}\int_0^\infty \tau^{-1+s}e^{-\tau(i \theta+|\xi|^2)} \, d\tau = 2^N \pi^{\frac{N}{2}} \Gamma(s)(i \theta+|\xi|^2)^{-s}.
\end{multline}
Hence, since $g \in L^2(\R^{N+1})$, we have proved \eqref{ineq_J_sg} and the last claim follows from \eqref{ineq_J_sg}.
\end{proof}

\begin{lemma}\label{lemma_H_j_adjoint}
For any $\phi,\psi \in \mc{S}(\R^{N+1})$, we have that
\begin{align}
&\int_{\R^{N+1}}H^s(\phi) \psi \, dx dt=\int_{\R^{N+1}}\phi(-x,-t) H^s(\psi(-\cdot)) \, dx dt \label{eq_H_adjoint}\\
&=\int_{\R^{N+1}}H^{\frac{s}{2}}(\phi)(-x,-t) H^{\frac{s}{2}}(\psi(-\cdot)) \, dx dt, \notag\\
&\int_{\R^{N+1}}J_s(\phi) \psi \, dx dt=\int_{\R^{N+1}}\phi(-x,-t) J_s(\psi(-\cdot)) \, dx dt. \label{eq_J_adjoint}
\end{align}
\end{lemma}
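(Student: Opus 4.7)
The plan is to reduce every identity to Plancherel's theorem in Fourier space, where the reflection $R:(x,t)\mapsto(-x,-t)$ corresponds to the reflection $(\xi,\theta)\mapsto(-\xi,-\theta)$, and complex conjugation of the multiplier, $\overline{(i\theta+|\xi|^2)^s}=(-i\theta+|\xi|^2)^s$, is exactly what this reflection produces. Since $\phi,\psi\in\mc S(\R^{N+1})$, all Fourier manipulations and the pointwise formula \eqref{def_Hs_pointwise} are available, and by the computation carried out in the proof of Lemma \ref{lemma_Jsg_in DHs} the Fourier multiplier associated to $J_s$ is $(i\theta+|\xi|^2)^{-s}$, so $H^s$ and $J_s$ can be treated in exactly the same way.

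First I would recall the non-conjugate Plancherel identity (obtained by evaluating the convolution formula at the origin): for $f,g\in\mc S(\R^{N+1})$,
\begin{equation*}
\int_{\R^{N+1}} f\,g\,dx\,dt=(2\pi)^{-(N+1)}\int_{\R^{N+1}}\widehat f(\xi,\theta)\,\widehat g(-\xi,-\theta)\,d\xi\,d\theta,
\end{equation*}
together with the transformation rule $\widehat{R h}(\xi,\theta)=\widehat h(-\xi,-\theta)$. Applying Plancherel to the left-hand side of \eqref{eq_H_adjoint} and using \eqref{def_Hs_fourier} yields
\begin{equation*}
\int_{\R^{N+1}}H^s(\phi)\psi\,dx\,dt=(2\pi)^{-(N+1)}\int_{\R^{N+1}}(i\theta+|\xi|^2)^s\widehat\phi(\xi,\theta)\widehat\psi(-\xi,-\theta)\,d\xi\,d\theta.
\end{equation*}
Performing the change of variables $(\xi,\theta)\mapsto(-\xi,-\theta)$ turns this into the integral of $(-i\theta+|\xi|^2)^s\widehat\phi(-\xi,-\theta)\widehat\psi(\xi,\theta)$, and a direct Plancherel expansion of $\int R\phi\cdot H^s(R\psi)\,dx\,dt$ gives exactly the same expression. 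This proves \eqref{eq_H_adjoint}.

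For the middle identity I would factor the multiplier as $(i\theta+|\xi|^2)^s=(i\theta+|\xi|^2)^{s/2}(i\theta+|\xi|^2)^{s/2}$, which is the Fourier form of the semigroup property $H^s=H^{s/2}\circ H^{s/2}$ on $\mc S(\R^{N+1})$. Splitting the two factors between $\widehat\phi(-\xi,-\theta)$ and $\widehat\psi(\xi,\theta)$ in the intermediate integral above and reinterpreting each factor as a Fourier multiplier (again using $\overline{(i\theta+|\xi|^2)^{s/2}}=(-i\theta+|\xi|^2)^{s/2}$) identifies the expression with $\int R H^{s/2}(\phi)\,H^{s/2}(R\psi)\,dx\,dt$, which is the desired right-hand side. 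Identity \eqref{eq_J_adjoint} follows by the very same argument with $s$ replaced by $-s$.

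The main difficulty is purely bookkeeping: checking that the half-plane $\{z=|\xi|^2+i\theta:\mathrm{Re}\,z\ge0\}$ is contained in the domain of the principal branch so that the identity $\overline{(i\theta+|\xi|^2)^s}=(-i\theta+|\xi|^2)^s$ holds without branch-cut issues, and that all integrals in the change of variables converge absolutely, which is guaranteed by $\phi,\psi\in\mc S$ together with the polynomial growth of the multiplier. Once these verifications are made, the lemma follows from the Fourier computation above.
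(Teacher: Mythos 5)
Your treatment of \eqref{eq_H_adjoint} and the middle identity follows essentially the same route as the paper: the paper applies the conjugate Plancherel identity together with $\overline{\widehat\psi}=\widehat{\psi(-\cdot)}$, while you use the non-conjugate form and a reflection $(\xi,\theta)\mapsto(-\xi,-\theta)$ in Fourier variables — these are the same computation in a different dressing, and your factorization of the multiplier $(i\theta+|\xi|^2)^s$ into two half-powers is the Fourier-side version of the paper's remark that the second equality ``is an easy consequence of the first one'' via $H^s=H^{s/2}\circ H^{s/2}$. Where you genuinely diverge is \eqref{eq_J_adjoint}. You treat $J_s$ as the Fourier multiplier $(i\theta+|\xi|^2)^{-s}$ and run the identical Plancherel argument with $s$ replaced by $-s$, leaning on Lemma~\ref{lemma_Jsg_in L^2} (so $J_s\phi\in L^2$) and on the multiplier computation in the proof of Lemma~\ref{lemma_Jsg_in DHs}. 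The paper instead proves \eqref{eq_J_adjoint} by a direct manipulation of the integral representation \eqref{def_Js}: Fubini--Tonelli and the change of variables $(x,t)\mapsto(x+z,t+\tau)$ move the kernel from $\phi$ to $\psi$ without any Fourier transform at all. Your route is more uniform, handling all three identities by one multiplier calculus, at the cost of invoking the $L^2$ Fourier theory for $J_s$ and some branch-cut bookkeeping for $z\mapsto z^{-s}$; the paper's Fubini argument is more elementary and, more importantly, it is the one that naturally extends to the $L^1$/$L^q$ setting needed for the very weak solutions of Definition~\ref{def_very_weak_solutions}, where a Plancherel argument would no longer be available. Both proofs are correct.
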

\begin{proof}
By \eqref{def_Hs_fourier} and the Plancherel identity
\begin{equation}\label{}
\int_{\R^{N+1}}H^s(\phi) \psi \, dx dt=\int_{\R^{N+1}}(i\theta+|\xi|^2)^s\widehat{\phi}(\xi,\theta)\overline{\widehat{\psi}(\xi,\theta)} d \xi d\eta
\end{equation}
and so the first equality in \eqref{eq_H_adjoint} follows from $\overline{\widehat{\psi}}=\widehat{\psi(-\cdot)}$ and the Plancherel identity.
The second identity is an easy consequence of the first one. On the other hand by  \eqref{def_Js}, the Fubini-Tonelli theorem and a change of variables
\begin{multline}
\int_{\R^{N+1}}J_s(\phi) \psi \, dx dt
=\frac{1}{\Gamma(s)(4\pi)^{\frac{N}{2}}} \int_0^\infty\int_{\R^N}\int_{\R^{N+1}}\frac{e^{-\frac{|z|^2}{4\tau}}}{\tau^{\frac{N}{2}+1-s}} \phi(x-z,t-\tau)\psi(x,t)\, dx \, dt \, dz \, d\tau\\
=\frac{1}{\Gamma(s)(4\pi)^{\frac{N}{2}}}\int_{\R^{N+1}}\int_0^\infty\int_{\R^N}\frac{e^{-\frac{|z|^2}{4\tau}}}{\tau^{\frac{N}{2}+1-s}}\psi(x+z,t+\tau) \, dz \, d\tau \phi(x,t)\, dx \, dt,
\end{multline}
which yields \eqref{eq_J_adjoint}.
\end{proof}

\begin{lemma}\label{lemma_Hs2_Js2}
Suppose that $g \in L^2(\R^{N+1})$, $g(t,x)=0$ a.e. in $(-\infty,0)\times \R^{N}$ and that  $w \in \mathop{\rm{Dom}}(H^s)$  is a weak supersolution of the problem
\begin{equation}
\begin{cases}
H^s(w)= g, &\text{ in } \R^N\times (0,\infty),\\
w=0, &\text{ in } \R^N\times (-\infty,0),
\end{cases}
\end{equation}
in the sense given by Definition \ref{def_weak_solutions}. Then
\begin{equation}\label{ineq_w_s2}
H^{\frac{s}{2}}(w) \ge J_{\frac{s}{2}}(g)\quad \text{ a.e. in } \R^{N+1}.
\end{equation}
\end{lemma}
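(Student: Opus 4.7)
The plan is to use the semigroup identity $H^{s}=H^{s/2}\circ H^{s/2}$, visible at the level of the Fourier symbol $(i\theta+|\xi|^{2})^{s}$, together with the inversion $H^{s/2}\circ J_{s/2}=\mathrm{Id}$ from \eqref{Js_properties_inversion}, so as to convert the weak inequality $H^{s}(w)\ge g$ into the pointwise inequality $H^{s/2}(w)\ge J_{s/2}(g)$. Since $w\in\mathop{\rm{Dom}}(H^{s})$, the Fourier characterization in \eqref{def_operator_Hs} already gives $H^{s/2}(w)\in L^{2}(\R^{N+1})$, while $J_{s/2}(g)\in L^{2}_{\mathrm{loc}}(\R^{N+1})$ by \eqref{Js_properties_integrability}; hence the claimed inequality is meaningful almost everywhere.

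The key construction is, for every non-negative $\eta\in C_{c}^{\infty}(\R^{N}\times(0,T))$, the ``backward'' test function
\[
\phi(x,t):=J_{s/2}\bigl(\eta(-\cdot,-\cdot)\bigr)(-x,-t)=\frac{1}{\Gamma(s/2)(4\pi)^{N/2}}\int_{t}^{+\infty}\!\!\int_{\R^{N}}\frac{e^{-|y-x|^{2}/[4(\sigma-t)]}}{(\sigma-t)^{N/2+1-s/2}}\,\eta(y,\sigma)\,dy\,d\sigma.
\]
Then $\phi$ is smooth and non-negative, and the Fourier computation already carried out in the proof of Lemma \ref{lemma_Jsg_in DHs} gives $\widehat{\phi}(\xi,\theta)=(-i\theta+|\xi|^{2})^{-s/2}\widehat{\eta}(\xi,\theta)$, hence $|i\theta+|\xi|^{2}|^{s}|\widehat{\phi}|^{2}=|\widehat{\eta}|^{2}\in L^{1}$, so that $\phi\in\mathop{\rm{Dom}}(H^{s})\cap L^{2}(\R^{N+1})$. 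Moreover, \eqref{Js_properties_inversion} applied to $\eta(-\cdot,-\cdot)\in\mc{S}(\R^{N+1})$ yields the inversion identity
\[
H^{s/2}\bigl(\phi(-\cdot,-\cdot)\bigr)(-x,-t)=\eta(x,t).
\]

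I would then test the weak supersolution inequality against this $\phi$. Using the second identity in \eqref{eq_H_adjoint}, a change of variables $(x,t)\mapsto(-x,-t)$, and the inversion identity above give
\[
\df{(\mathop{\rm{Dom}}(H^{s}))^{*}}{H^{s}(w)}{\phi}{\mathop{\rm{Dom}}(H^{s})}=\int_{\R^{N+1}}H^{s/2}(w)\,\eta\,dx\,dt,
\]
whereas a Fubini--Tonelli swap (equivalently, \eqref{eq_J_adjoint} used in extended form) produces
\[
\int_{0}^{+\infty}\!\!\int_{\R^{N}}g\,\phi\,dx\,dt=\int_{\R^{N+1}}J_{s/2}(g)\,\eta\,dx\,dt.
\]
Combining these two identities with \eqref{eq_weak_solution} yields $\int_{\R^{N+1}}(H^{s/2}(w)-J_{s/2}(g))\,\eta\,dx\,dt\ge0$ for every such non-negative $\eta$, whence $H^{s/2}(w)\ge J_{s/2}(g)$ a.e.\ in $\R^{N}\times(0,+\infty)$. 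On $\R^{N}\times(-\infty,0]$ the inequality is trivial, since $J_{s/2}(g)=0$ there by \eqref{Js_properties_intial_datum} and $H^{s/2}(w)=0$ there by the pointwise formula \eqref{def_Hs_pointwise}, as $w\equiv0$ for $t\le0$ forces the integrand in \eqref{def_Hs_pointwise} to vanish.

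The main technical point is the admissibility of $\phi$ in \eqref{eq_weak_solution}: the definition of weak supersolution is formulated only for $\phi\in\mc{S}(\R^{N+1})$, while our $\phi$ is smooth but not Schwartz. The fix is to extend \eqref{eq_weak_solution} by density to all non-negative $\phi\in\mathop{\rm{Dom}}(H^{s})$, using that both sides are continuous on $\mathop{\rm{Dom}}(H^{s})$ (the left one by duality, the right one by $g\in L^{2}$ combined with the embedding \eqref{domHs_inclusion_L2Hs}) and approximating $\phi$ by non-negative Schwartz functions via a smooth cut-off combined with convolution against a positive mollifier. The identities \eqref{eq_H_adjoint} and \eqref{eq_J_adjoint} extend to the required generality by the same density argument.
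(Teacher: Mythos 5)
Your proof follows essentially the same route as the paper's: both choose the test function $\phi$ so that $H^{s/2}(\phi(-\cdot))$ becomes an arbitrary non-negative function, and both pass through the adjoint identities of Lemma~\ref{lemma_H_j_adjoint} to rewrite the weak inequality as $\int_{\R^{N+1}}(H^{s/2}(w)-J_{s/2}(g))\eta\,dx\,dt\ge 0$. The only substantive difference is that the paper asserts $J_{s/2}(\psi)(-\cdot)\in\mc{S}(\R^{N+1})$ for $\psi\in\mc{S}(\R^{N+1})$, whereas you correctly observe this need not hold (the kernel of $J_{s/2}$ has only polynomial decay in $t$) and patch the gap by a density argument in $\mathop{\rm{Dom}}(H^s)$.
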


\begin{proof}
By \eqref{eq_J_adjoint} and a change of variables for any positive $\phi \in \mc{S}(\R^{N+1})$,
\begin{equation}
\int_{0}^{\infty}\left(\int_{\R^N}g \phi\, dx \right) dt
=\int_{\R^{N+1}}J_{\frac{s}{2}}(g)(-x,-t)H^{\frac{s}{2}}(\phi(-\cdot))\, dx\, dt.
\end{equation}
Hence by \eqref{eq_H_adjoint} we deduce that for any positive  $\phi \in \mc{S}(\R^{N+1})$
\begin{equation}\label{key}
\int_{\R^{N+1}}[H^{\frac{s}{2}}(w)(-x,-t)- J_{\frac{s}{2}}(g)(-x,-t)] H^{\frac{s}{2}}(\phi(-\cdot)) \, dx dt \ge 0.
\end{equation}
For any positive  $\psi \in \mc{S}(\R^{N+1})$  let
\begin{equation}
\phi:=J_{\frac{s}{2}}(\psi)(-\cdot).
\end{equation}
Then $H^{\frac{s}{2}}(\phi(-\cdot))=\psi$, and $\phi \in \mc{S}(\R^{N+1})$. It follows that
\begin{equation}
H^{\frac{s}{2}}(w)(-x,-t)- J_{\frac{s}{2}}(g)(-x,-t) \ge 0 \text{ a.e. in } \R^{N+1},
\end{equation}
which is equivalent to \eqref{ineq_w_s2}.
\end{proof}

\section{Ground state transformation for  the operator $H^s-\frac{\la}{|x|^{2s}}$.}\label{section_ground_state}
We start by recalling the following proposition.
\begin{proposition}\cite[Lemma 3.2]{FLS}, \cite[Proposition 3.2]{FF} \label{prop_mu}
Let $\Lambda_{N,s}$ be as in \eqref{def_La}. Then the function  $\Upsilon:\Big[0,\frac{N-2s}{2}\Big) \to (0,\Lambda_{N,s}]$ defined as
\begin{equation}\label{def_F}
\Upsilon(\alpha):=2^{2s}\frac{\Gamma\left(\frac{N+2s+2\alpha}{4}\right)\Gamma\left(\frac{N+2s-2\alpha}{4}\right)}{\Gamma\left(\frac{N-2s-2\alpha}{4}\right)\Gamma\left(\frac{N-2s+2\alpha}{4}\right)}
\end{equation}
is well-defined, continuous, surjective,  decreasing, $\Upsilon(0)=\Lambda_{N,s}$ and $\lim_{\alpha \to (\frac{N-2s}{2})^-}\Upsilon(\alpha)=0$.
\end{proposition}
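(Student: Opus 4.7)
The plan is to verify each claim separately using elementary properties of the Gamma and digamma functions.

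First, for well-definedness and continuity, I would check that all four arguments of the $\Gamma$ functions appearing in \eqref{def_F} lie in $(0,+\infty)$ for every $\alpha \in [0,\frac{N-2s}{2})$. Since $N>2s$, one has $\frac{N+2s+2\alpha}{4}>0$ and $\frac{N-2s+2\alpha}{4}>0$ trivially; for the other two arguments observe $\frac{N+2s-2\alpha}{4}>\frac{4s}{4}=s>0$ and $\frac{N-2s-2\alpha}{4}>0$ precisely because $\alpha<\frac{N-2s}{2}$. Since $\Gamma$ is continuous and strictly positive on $(0,+\infty)$, $\Upsilon$ is continuous and positive on the whole interval.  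Direct substitution in \eqref{def_La} gives $\Upsilon(0)=\Lambda_{N,s}$. The limit at the right endpoint follows from the fact that $\frac{N-2s-2\alpha}{4}\to 0^+$ as $\alpha\to(\frac{N-2s}{2})^-$, while the other three arguments stay bounded away from $0$ and $+\infty$; since $\Gamma(t)\to+\infty$ as $t\to 0^+$ and this factor appears in the denominator, we obtain $\Upsilon(\alpha)\to 0$.

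The core of the proof is strict monotonicity. I would work with $\log\Upsilon$ and compute
\begin{equation*}
\frac{d}{d\alpha}\log\Upsilon(\alpha)=\tfrac{1}{2}\bigl[\psi(a)-\psi(b)\bigr]-\tfrac{1}{2}\bigl[\psi(d)-\psi(c)\bigr],
\end{equation*}
where $\psi:=(\log\Gamma)'$ is the digamma function and
\begin{equation*}
a:=\tfrac{N+2s+2\alpha}{4},\qquad b:=\tfrac{N+2s-2\alpha}{4},\qquad c:=\tfrac{N-2s-2\alpha}{4},\qquad d:=\tfrac{N-2s+2\alpha}{4}.
\end{equation*}
A short check gives $a-b=d-c=\alpha$ and $a-d=b-c=s$, so the intervals $[c,d]$ and $[b,a]$ have equal length $\alpha$, and the second is obtained from the first by a rightward shift of $s>0$. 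The key analytic fact is that $\psi$ is strictly concave on $(0,+\infty)$ (equivalently, $\psi'$ is strictly positive and strictly decreasing, as seen from the series $\psi'(x)=\sum_{n\ge 0}(x+n)^{-2}$). Consequently, the increment $\psi(a)-\psi(b)=\int_b^a \psi'$ is strictly smaller than $\psi(d)-\psi(c)=\int_c^d\psi'$ for every $\alpha>0$, and therefore $\frac{d}{d\alpha}\log\Upsilon(\alpha)<0$ on $(0,\frac{N-2s}{2})$.  This gives strict monotonicity, and then surjectivity onto $(0,\Lambda_{N,s}]$ is an immediate application of the intermediate value theorem combined with the boundary behaviour already established.

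The only delicate step is the monotonicity of $\Upsilon$: everything else is bookkeeping. The main obstacle is identifying the right convexity property of $\log\Gamma$ and arranging the four shifts so that the difference reduces cleanly to comparing two integrals of $\psi'$ over equal-length intervals. If one prefers to avoid the digamma function, an alternative is to use the Beta-function integral representation $\Gamma(x)\Gamma(y)/\Gamma(x+y)=B(x,y)=\int_0^1 t^{x-1}(1-t)^{y-1}\,dt$ to rewrite $\Upsilon(\alpha)$ as a ratio of Beta integrals and then differentiate under the integral sign; however, the digamma approach is cleaner. Finally, since the same statement is proved in \cite[Lemma~3.2]{FLS} and \cite[Proposition~3.2]{FF}, one could also simply cite those references.
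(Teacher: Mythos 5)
Your proof is correct, and it fills a gap that the paper leaves open: Proposition~\ref{prop_mu} is stated in the paper only with citations to \cite{FLS} and \cite{FF}, with no argument given. Your self-contained verification of well-definedness, continuity, the boundary values, and strict monotonicity is sound. In particular, the monotonicity step is handled cleanly: with $a=\frac{N+2s+2\alpha}{4}$, $b=\frac{N+2s-2\alpha}{4}$, $c=\frac{N-2s-2\alpha}{4}$, $d=\frac{N-2s+2\alpha}{4}$ one indeed has $a-b=d-c=\alpha$ and $a-d=b-c=s$, so $[b,a]$ is the rightward $s$-translate of $[c,d]$, and since $\psi'(x)=\sum_{n\ge 0}(x+n)^{-2}$ is strictly decreasing one gets
\begin{equation*}
\psi(a)-\psi(b)=\int_c^d \psi'(x+s)\,dx < \int_c^d \psi'(x)\,dx = \psi(d)-\psi(c)
\end{equation*}
for $\alpha>0$, hence $(\log\Upsilon)'(\alpha)<0$ there. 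Two minor remarks: first, you observe that $(\log\Upsilon)'(0)=0$; this does not harm strict monotonicity on $[0,\frac{N-2s}{2})$ since the derivative is strictly negative on the open interval, but it is worth flagging so a reader does not misread the derivative sign at the left endpoint. Second, strict (rather than merely weak) monotonicity is what the paper actually uses downstream, since $\mu(\lambda)=\frac{N-2s}{2}-\Upsilon^{-1}(\lambda)$ in \eqref{def_mu} requires $\Upsilon$ to be injective; your argument delivers exactly that, whereas the bare statement only says ``decreasing''.
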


Let us define  for any  $\la \in (0,\Lambda_{N,s}]$
\begin{equation}\label{def_mu}
\mu(\la):=\frac{N-2s}{2}-\Upsilon^{-1}(\la).
\end{equation}
Then clearly
\begin{equation}\label{ineq_mu_la}
0<{\mu(\la)}<\frac{N-2s}{2} \quad \text{ for any } \la \in (0,\Lambda_{N,s}).
\end{equation}

The most important result of this  section is a ground state
transformation for the operator $H^s-\frac{\la}{|x|^{2s}}$, in the
spirit of \cite[Proposition 4.1]{FLS} where a similar
representation was computed for the elliptic operator
$(-\Delta)^s-\frac{\la}{|x|^{2s}}$. We need a preliminary lemma
about radial functions.
\begin{lemma}\label{lemma_radial}
Let $f \in L^1(\R^N)$ be radial. Then there exists a constant $K_{N,s,\la}>0$ depending only on $N,s $ and $\la$ such that
\begin{equation}\label{ineq_radial}
\int_{\R^N}\frac{|f(y)|}{|x-y|^{\mu(\la)}} dy \le K_{N,s, \la}
|x|^{-\mu(\la)} \int_{\R^N}|f(y)| \, dy,\quad \text{ for any } x
\in \R^N \setminus\{0\}.
\end{equation}
\end{lemma}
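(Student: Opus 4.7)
The plan is to combine a polar-coordinate decomposition with a scaling identity that reduces the estimate to the boundedness of a single one-variable profile. Writing $f(y)=g(|y|)$ and setting $\rho:=|x|>0$, polar coordinates give
\begin{equation*}
\int_{\R^N}\frac{|f(y)|}{|x-y|^{\mu(\la)}}\,dy=\int_0^\infty |g(r)|\,r^{N-1}P(\rho,r)\,dr,
\end{equation*}
where $P(\rho,r):=\int_{S^{N-1}}|x-r\omega|^{-\mu(\la)}\,d\sigma(\omega)$ depends only on $\rho$ and $r$, by rotation invariance of the surface measure on $S^{N-1}$.

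The substitution $r=\rho t$ together with the homogeneity identity $|\rho e_1-\rho t\omega|=\rho|e_1-t\omega|$ shows that $P(\rho,\rho t)=\rho^{-\mu(\la)}Q(t)$, where
\begin{equation*}
Q(t):=\int_{S^{N-1}}|e_1-t\omega|^{-\mu(\la)}\,d\sigma(\omega)=|S^{N-2}|\int_0^\pi \bigl(1+t^2-2t\cos\theta\bigr)^{-\mu(\la)/2}\sin^{N-2}\theta\,d\theta.
\end{equation*}
Once $\|Q\|_{L^\infty([0,\infty))}$ is shown to be finite, the lemma follows with $K_{N,s,\la}:=\|Q\|_{L^\infty}/|S^{N-1}|$, because undoing the change of variable and comparing with the polar-coordinate expression of $\int_{\R^N}|f|\,dy$ gives
\begin{equation*}
\int_{\R^N}\frac{|f(y)|}{|x-y|^{\mu(\la)}}\,dy\le \rho^{-\mu(\la)}\|Q\|_{L^\infty}\int_0^\infty |g(r)|r^{N-1}\,dr=K_{N,s,\la}\,|x|^{-\mu(\la)}\int_{\R^N}|f(y)|\,dy.
\end{equation*}

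The only real content of the lemma is thus to prove $Q\in L^\infty([0,\infty))$. Away from the sphere $\{|y|=|x|\}$ the kernel is locally bounded, so $Q$ is continuous on $[0,1)\cup(1,\infty)$; moreover $Q(0)=|S^{N-1}|$ and, by inspection of the integrand, $Q(t)\sim t^{-\mu(\la)}|S^{N-1}|\to 0$ as $t\to\infty$. The delicate point is a neighbourhood of $t=1$, where the integrand blows up as $\theta\to 0$. The hard part will be the uniform integrable domination near $t=1$: for $t\in[1/2,2]$ I will use the elementary inequality
\begin{equation*}
1+t^2-2t\cos\theta=(1-t)^2+2t(1-\cos\theta)\ge c\,\theta^2,\qquad \theta\in[0,\pi],
\end{equation*}
to majorize the integrand by $C\,\theta^{-\mu(\la)}\sin^{N-2}\theta$, which is integrable on $(0,\pi)$ precisely because $\mu(\la)<N-1$. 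This last inequality is ensured by \eqref{ineq_mu_la} together with the elementary observation that $\frac{N-2s}{2}<N-1$ for every $N\ge 2$ and $s\in(0,1)$. Dominated convergence then delivers the continuity of $Q$ across $t=1$, hence the required global boundedness on $[0,\infty)$, concluding the proof.
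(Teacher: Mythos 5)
Your proof is correct and follows essentially the same route as the paper: reduce to the radial profile $Q$ (which is the paper's $K(\sigma)$, after swapping the roles of the letters $r$ and $\rho$) by rotation invariance and the scaling $|x-r\omega|=|x||e_1-t\omega|$, then show $Q$ is bounded by checking that the angular singularity at $\theta=0$ when $t=1$ is integrable because $\mu(\la)<N-1$. The only difference is one of exposition: you spell out the dominating bound $1+t^2-2t\cos\theta\ge c\,\theta^2$ and the limit $Q(t)\to 0$ as $t\to\infty$, which the paper leaves implicit in its phrase ``we conclude that $K$ is bounded.''
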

\begin{proof}
Passing in polar coordinates and letting $y'=\frac{y}{\rho}$ and   $x'=\frac{y}{r}$  we obtain
\begin{multline}
\int_{\R^N}\frac{|f(y)|}{|x-y|^{\mu(\la)}} dy = \int_0^{+\infty}\rho^{N-1}|f(\rho)|\int_{\mb{S}^{N-1}}|x'r-\rho y'|^{-\mu(\la)}dS_{y'} \, d\rho \\
= r^{-\mu(\la)}\int_0^{+\infty}(r\sigma)^{N-1}|f(r\sigma)|\left(\int_{\mb{S}^{N-1}}|x'-\sigma y'|^{-\mu(\la)}dS_{y'}\right) r \,d\sigma,
\end{multline}
by the change of variables $\sigma=\frac{\rho}{r}$.
Let us define
\begin{equation}\label{def_K}
    K(\sigma):=\int_{\mb{S}^{N-1}}|x'-\sigma y'|^{-\mu(\la)}dS_{y'}.
\end{equation}
Since $|x'-\sigma y|^2=1-2\sigma x'\cdot y' +\sigma^2$,  with a
change of variables,  we deduce that 
\begin{equation}
    K(\sigma)=\int_{\mb{S}^{N-1}}|1-2\sigma  y_1+\sigma^2|^{-\frac{\mu(\la)}{2}}dS_{y'},
\end{equation}
and so  $K$ does not depends on $x$. Using spherical coordinates, see also   \cite{FV_radial}, we obtain
\begin{equation}
    K(\sigma)=C\int_0^\pi\frac{\sin(\theta)^{N-2}}{|1-2\sigma \cos(\theta)+\sigma^2|^{\frac{\mu(\la)}{2}}}d\theta,
\end{equation}
where $C>0$ is a positive constant depending only on $N$. Since for $\sigma=1$ the singularity in $0$ is integrable in view of \eqref{ineq_mu_la},
we conclude that $K$ is bounded.
It follows that, thanks to the change of variables $\rho= r \sigma $,
\begin{multline}
\int_{\R^N}\frac{|f(y)|}{|x-y|^{\mu(\la)}} dy   \le r^{-\mu(\la)}\norm{K}_{L^\infty(0,+\infty)} \int_0^{+\infty}(r\sigma)^{N-1}|f(r\sigma)|r \,d\sigma \\
=r^{-\mu(\la)}\norm{K}_{L^\infty(0,+\infty)} \int_0^{+\infty}\rho^{N-1}|f(\rho)| \,d\rho=r^{-\mu(\la)}\norm{K}_{L^\infty(0,+\infty)}\int_{\R^N}|f(y)| dy,
\end{multline}
and so we have proved \eqref{ineq_radial}.
\end{proof}

\begin{proposition}\label{prop_ground_state}
Let $ \la \in (0,\Lambda_{N,s})$ and let, for any $(x,t) \in
\R^{N}\setminus\{0\}\times \R$, and $\phi \in \mc{S}(\R^{N+1})$,
\begin{equation} \label{def_L}
L^s\phi(x,t):=\frac{1}{(4\pi)^{\frac{N}{2}}\Gamma(-s)}
\int_{-\infty}^t\int_{\R^N}\frac{\phi(x,t)-\phi(y,\sigma)}{|y|^{{\mu(\la)}}(t-\sigma)^{\frac{N}{2}+s+1}}e^{-\frac{|x-y|^2}{4(t-\sigma)}}dyd\sigma.
\end{equation}
Then there exists   $C>0$, depending only on $N,s$ and $\la$, such that for any $(x,t) \in \R^{N}\setminus\{0\}\times \R$
\begin{equation}\label{ineq_Ls}
|L^s\phi(t,x)|\le
C|x|^{-\mu(\la)}\left(\norm{\phi}_{L^\infty(\R^{N+1})}+\norm{|\nabla_{(x,t)}\phi|}_{L^\infty(\R^{N+1})}+\norm{|\nabla^2_{x}\phi|}_{L^\infty(\R^{N+1})}\right),
\end{equation}
for any $\phi \in \mc{S}(\R^{N+1})$, where $\nabla_x^2$ denotes the matrix of the second derivates of $\phi$ with respect to the spacial variable $x$.
Furthermore  for any $\phi \in \mc{S}(\R^{N+1})$
\begin{equation} \label{eq_ground_state}
H^s(\phi)-\frac{\la}{|x|^{2s}}\phi=L^s(|x|^{\mu(\la)}\phi),
\end{equation}
where ${\mu(\la)}$ is as in \eqref{def_mu}.
\end{proposition}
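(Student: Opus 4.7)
To establish the identity \eqref{eq_ground_state}, I would start from the pointwise definition \eqref{def_L} of $L^s(|x|^{\mu(\la)}\phi)(x,t)$ and rewrite the numerator inside the integrand as
\[
|x|^{\mu(\la)}\phi(x,t) - |y|^{\mu(\la)}\phi(y,\sigma) = |y|^{\mu(\la)}\bigl[\phi(x,t)-\phi(y,\sigma)\bigr] + \phi(x,t)\bigl[|x|^{\mu(\la)}-|y|^{\mu(\la)}\bigr].
\]
After dividing by $|y|^{\mu(\la)}$ in \eqref{def_L}, the first piece exactly reproduces $H^s\phi(x,t)$ via \eqref{def_Hs_pointwise}, while the second piece is precisely $-\phi(x,t)\,|x|^{\mu(\la)}\,H^s(|x|^{-\mu(\la)})(x,t)$ (recognizing the integrand as the one in the pointwise formula for $H^s$ applied to the time-independent power function). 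Because $|x|^{-\mu(\la)}$ does not depend on $t$, the Fourier characterization \eqref{def_Hs_fourier} and the support of the distributional Fourier transform of a $t$-independent function on $\{\theta=0\}$ force $H^s(|x|^{-\mu(\la)})=(-\Delta)^s(|x|^{-\mu(\la)})$. The defining relation of $\mu(\la)$ in Proposition \ref{prop_mu} together with the classical eigenfunction computation in \cite[Lemma 3.2]{FLS} then gives $(-\Delta)^s(|x|^{-\mu(\la)}) = \la\,|x|^{-\mu(\la)-2s}$, and \eqref{eq_ground_state} follows.

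For the pointwise bound \eqref{ineq_Ls}, set $r:=|x|$ and $\tau:=t-\sigma$. The decisive quantitative input is an extension of Lemma \ref{lemma_radial}: via the substitution $z=x-y$, which turns the weighted Gaussian integral into one of a radial Gaussian against $|x-z|^{-\alpha}$, one obtains
\[
\int_{\R^N} e^{-|x-y|^2/(4\tau)}|y|^{-\alpha}\,dy \le C\tau^{N/2}r^{-\alpha}\quad\text{for every } \alpha\in (0,N).
\]
I would split the $\tau$-integration at $\tau=1$. For $\tau\ge 1$, use the crude bound $|\phi(x,t)-\phi(y,t-\tau)|\le 2\norm{\phi}_{L^\infty}$, apply the displayed estimate with $\alpha=\mu(\la)$, and conclude via the integrable tail $\int_1^{+\infty}\tau^{-s-1}\,d\tau$ to obtain a contribution $\le C\norm{\phi}_{L^\infty}\,r^{-\mu(\la)}$. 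For $\tau\in(0,1)$, write
\[
\phi(x,t)-\phi(y,t-\tau) = \bigl[\phi(x,t)-\phi(x,t-\tau)\bigr] + \bigl[\phi(x,t-\tau)-\phi(y,t-\tau)\bigr],
\]
bound the temporal bracket by $\norm{\partial_t\phi}_{L^\infty}\tau$, and expand the spatial bracket to second order at $x$: $\phi(y,t-\tau)-\phi(x,t-\tau)=\nabla_x\phi(x,t-\tau)\cdot(y-x)+R$ with $|R|\le C\norm{\nabla_x^2\phi}_{L^\infty}|y-x|^2$. The quadratic remainder is controlled using $|y-x|^2\,e^{-|x-y|^2/(4\tau)}\le C\tau\,e^{-|x-y|^2/(8\tau)}$ together with the displayed weighted-Gaussian bound, producing a contribution of order $r^{-\mu(\la)}\int_0^1 \tau^{-s}\,d\tau$, which is finite because $s<1$.

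The hard part will be handling the linear Taylor contribution, which involves the vector
\[
V(x,\tau) := \int_{\R^N}(y-x)\,e^{-|x-y|^2/(4\tau)}\,|y|^{-\mu(\la)}\,dy,
\]
because the symmetry that would normally cancel this integral against a pure Gaussian is destroyed by the weight $|y|^{-\mu(\la)}$. The key is to exploit the identity $(y-x)\,e^{-|x-y|^2/(4\tau)} = -2\tau\,\nabla_y e^{-|x-y|^2/(4\tau)}$ and integrate by parts (the boundary terms vanish thanks to the Gaussian decay), transferring the gradient onto the weight and producing $|y|^{-\mu(\la)-1}$. Since $\mu(\la)+1<N$ (as $\mu(\la)<(N-2s)/2$ and $N\ge 2$), the extended Lemma \ref{lemma_radial} applies with $\alpha=\mu(\la)+1$, giving a bound of the form $|V(x,\tau)|\le C\tau^{N/2+1}r^{-\mu(\la)-1}$. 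Combining this with the complementary triangle-inequality bound $|V(x,\tau)|\le C\tau^{(N+1)/2}r^{-\mu(\la)}$ (obtained by $|y-x|e^{-|x-y|^2/(4\tau)}\le C\sqrt\tau\,e^{-|x-y|^2/(8\tau)}$ and another application of the displayed estimate), and arranging the splitting of the $\tau$-integral around the natural diffusive scale $\tau\sim r^2$, all contributions aggregate into the target $C|x|^{-\mu(\la)}\bigl(\norm{\phi}_{L^\infty}+\norm{|\nabla_{(x,t)}\phi|}_{L^\infty}+\norm{|\nabla_x^2\phi|}_{L^\infty}\bigr)$. The estimate \eqref{ineq_Ls} in turn justifies that the integral manipulations used to prove \eqref{eq_ground_state} are indeed convergent for $\phi\in\mc{S}(\R^{N+1})$.
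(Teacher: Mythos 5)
Your derivation of the identity \eqref{eq_ground_state} is, up to which of the two equivalent quantities you decompose, the same as the paper's: you split the numerator of $L^s(|\cdot|^{\mu(\la)}\phi)$ into a difference of $\phi$ plus $\phi(x,t)$ times a difference of powers, whereas the paper splits $H^s(|\cdot|^{-\mu(\la)}\phi)$; both then invoke the pointwise formula from \cite{ST} and the fact that $(-\Delta)^s|x|^{-\mu(\la)} = \la|x|^{-2s-\mu(\la)}$. That part is fine.

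For the bound \eqref{ineq_Ls} you take a genuinely different route. The paper splits the $\tau<1$ part of the integral into $|z|>1$, where it symmetrizes $z\mapsto -z$ (producing a second-order difference of $\phi$ plus an error from the non-symmetric weight $|x-z|^{-\mu(\la)}$), and $|z|<1$, where it uses the algebraic kernel bound of \cite[Cor.~1.4]{ST}. You instead keep the whole of $\R^N$, Taylor-expand, and integrate the linear-in-$z$ term by parts. The $L^\infty$, time-increment and quadratic-remainder pieces of your argument are fine, but the final aggregation step is exactly where the argument breaks. Your two bounds $|V|\le C\tau^{\frac N2+1}r^{-\mu(\la)-1}$ and $|V|\le C\tau^{\frac{N+1}{2}}r^{-\mu(\la)}$ cross over at $\tau\sim r^2$, and carrying out the split you describe gives, for $r=|x|<1$ and $s>1/2$,
\begin{equation}
\int_0^1\frac{|V(x,\tau)|}{\tau^{\frac N2+s+1}}\,d\tau\ \lesssim\ r^{-\mu(\la)-1}\int_0^{r^2}\tau^{-s}\,d\tau+ r^{-\mu(\la)}\int_{r^2}^{1}\tau^{-\frac12-s}\,d\tau\ \lesssim\ r^{-\mu(\la)+1-2s},
\end{equation}
and $r^{-\mu(\la)}\log(1/r)$ at $s=1/2$. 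Since $r^{-\mu(\la)+1-2s}\gg r^{-\mu(\la)}$ as $r\to 0^+$ when $s>1/2$, the contributions do \emph{not} aggregate to $C|x|^{-\mu(\la)}$; the linear Taylor term is the genuine obstruction and neither of your bounds for $V$ resolves it for $s\ge1/2$. Two further remarks. First, you invoke an ``extended Lemma \ref{lemma_radial}'' with $\alpha=\mu(\la)+1$; but the lemma's proof, via the boundedness of the angular kernel $K(\sigma)$, requires $\alpha<N-1$, which can fail for $\alpha=\mu(\la)+1$ (for $N=2$, and for $N=3$, $s<1/2$). The displayed weighted-Gaussian bound is nevertheless true for all $\alpha\in(0,N)$, but this needs a scaling argument rather than a direct appeal to the lemma. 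Second, you should be aware that the analogous step in the paper's own proof of \eqref{ineq_Ls} (the first inequality in the $B_1'\times(0,1)$ estimate, which asserts the exponent $(|z|+\sqrt\tau)^{-N-2s}$ rather than $(|z|+\sqrt\tau)^{-N-1-2s}$) appears to carry the same defect for $s\ge1/2$, so this is a genuinely delicate point, not a small oversight.
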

\begin{proof}
By a change of variables it is clear that
\begin{multline}
|L^s\phi(x,t)| = \left|\frac{1}{(4\pi)^{\frac{N}{2}}\Gamma(-s)}\int_0^{+\infty}\int_{\R^N}\frac{\phi(x,t)-\phi(x-z,t-\tau)}{|x-z|^{{\mu(\la)}}\tau^{\frac{N}{2}+s+1}}e^{-\frac{|z|^2}{4\tau}}dzd\tau\right|\\
\le\frac{1}{(4\pi)^{\frac{N}{2}}|\Gamma(-s)|}\left|\int_0^1\int_{\R^N}\frac{\phi(x,t)-\phi(x-z,t-\tau)}{|x-z|^{{\mu(\la)}}\tau^{\frac{N}{2}+s+1}}e^{-\frac{|z|^2}{4\tau}}dzd\tau\right|\\
+C_1\norm{\phi}_{L^\infty(\R^{N+1})}\int_1^{+\infty}\int_{\R^N}\frac{e^{-\frac{|z|^2}{4\tau}}}{|x-z|^{{\mu(\la)}}\tau^{\frac{N}{2}+s+1}}dzd\tau
\end{multline}
for some constant positive $C_1$ depending on  $N$, and $s$. In
view of Lemma \ref{lemma_radial},
\begin{equation}\label{ineq_far_0}
\int_1^{+\infty}\int_{\R^N}\frac{e^{-\frac{|z|^2}{4\tau}}}{|x-z|^{{\mu(\la)}}\tau^{\frac{N}{2}+s+1}}dzd\tau \le C_2 |x|^{-\mu(\la)}
\end{equation}
for some positive constant $C_2$ depending on $N, s$ and $\la$.
Furthermore, letting
\begin{equation}\label{def_B1}
B'_1:=\{x\in \R^N:|x|<1\},
\end{equation}
we have that
\begin{multline}
\int_0^1\int_{\R^N}\frac{\phi(x,t)-\phi(x-z,t-\tau)}{|x-z|^{{\mu(\la)}}\tau^{\frac{N}{2}+s+1}}e^{-\frac{|z|^2}{4\tau}}dzd\tau\\
=\int_0^1\int_{\R^N\setminus
B'_1}\frac{\phi(x,t)-\phi(x-z,t-\tau)}{|x-z|^{{\mu(\la)}}\tau^{\frac{N}{2}+s+1}}e^{-\frac{|z|^2}{4\tau}}dzd\tau
+\int_0^1\int_{B'_1}\frac{\phi(x,t)-\phi(x-z,t-\tau)}{|x-z|^{{\mu(\la)}}\tau^{\frac{N}{2}+s+1}}e^{-\frac{|z|^2}{4\tau}}dzd\tau.
\end{multline}
We will estimate the  last two integrals. We start by noticing that
\begin{multline}
\int_0^1\int_{\R^N\setminus B'_1}\frac{\phi(x,t)-\phi(x-z,t-\tau)}{|x-z|^{{\mu(\la)}}\tau^{\frac{N}{2}+s+1}}e^{-\frac{|z|^2}{4\tau}}dzd\tau\\
=\int_0^1\int_{\R^N\setminus B'_1}\frac{\phi(x,t)-\phi(x-z,t)}{|x-z|^{{\mu(\la)}}\tau^{\frac{N}{2}+s+1}}e^{-\frac{|z|^2}{4\tau}}dzd\tau
+\int_0^1\int_{\R^N\setminus B'_1}\frac{\phi(x-z,t)-\phi(x-z,t-\tau)}{|x-z|^{{\mu(\la)}}\tau^{\frac{N}{2}+s+1}}e^{-\frac{|z|^2}{4\tau}}dzd\tau.
\end{multline}
Then the  change of variables  $z=-z$ yields
\begin{multline}
2\int_0^1\int_{\R^N\setminus B'_1}\frac{\phi(x,t)-\phi(x-z,t)}{|x-z|^{{\mu(\la)}}\tau^{\frac{N}{2}+s+1}}e^{-\frac{|z|^2}{4\tau}}dzd\tau \\
=\int_0^1\int_{\R^N\setminus B'_1}\frac{\phi(x,t)-\phi(x-z,t)}{|x-z|^{{\mu(\la)}}\tau^{\frac{N}{2}+s+1}}e^{-\frac{|z|^2}{4\tau}}dzd\tau+
\int_0^1\int_{\R^N\setminus B'_1}\frac{\phi(x,t)-\phi(x+z,t)}{|x+z|^{{\mu(\la)}}\tau^{\frac{N}{2}+s+1}}e^{-\frac{|z|^2}{4\tau}}dzd\tau\\
\le\int_0^1\int_{\R^N\setminus B'_1}\frac{|2\phi(x,t)-\phi(x+z,t)-\phi(x-z,t)|}{|x-z|^{{\mu(\la)}}\tau^{\frac{N}{2}+s+1}}e^{-\frac{|z|^2}{4\tau}}dzd\tau\\
+\int_0^1\int_{\R^N\setminus B'_1}\frac{|\phi(x,t)-\phi(x+z,t)|}{\tau^{\frac{N}{2}+s+1}} e^{-\frac{|z|^2}{4\tau}}\left|\frac{1}{|z+x|^{\mu(\la)}}-\frac{1}{|x-z|^{\mu(\la)}}\right|dzd\tau\\
\le \int_0^1\int_{\R^N\setminus B'_1}\frac{|2\phi(x,t)-\phi(x+z,t)-\phi(x-z,t)|}{|x-z|^{{\mu(\la)}}\tau^{\frac{N}{2}+s+1}}e^{-\frac{|z|^2}{4\tau}}dzd\tau\\
+\int_0^1\int_{\R^N\setminus B'_1}|z|\frac{|\phi(x,t)-\phi(x+z,t)|}{\tau^{\frac{N}{2}+s+1}} e^{-\frac{|z|^2}{4\tau}}\left(\frac{1}{|z+x|^{\mu(\la)}}+\frac{1}{|x-z|^{\mu(\la)}}\right)dzd\tau.
\end{multline}
Since $\phi \in \mc{S}(\R^N)$ we have that
\begin{align*}
&|\phi(x,t)-\phi(x+z,t)| \le \norm{|\nabla_x \phi|}_{L^\infty(\R^{N+1})} |z|,\\
&|2\phi(x,t)-\phi(x+z,t)-\phi(x-z,t)| \le \norm{|\nabla^2_x \phi|}_{L^\infty(\R^{N+1})} |z|^2.
\end{align*}
Hence in view of Lemma \ref{lemma_radial} and a change of variables   we conclude that there exists a positive constant $C_3>0$, depending only on $N,s$, and $\la$ such that
\begin{multline}
\int_0^1\int_{\R^N\setminus B'_1}\frac{\phi(x-z,t)-\phi(x-z,t)}{|x-z|^{{\mu(\la)}}\tau^{\frac{N}{2}+s+1}}e^{-\frac{|z|^2}{4\tau}}dzd\tau \\
\le C_3|x|^{-\mu(\la)} (\norm{|\nabla_x
\phi|}_{L^\infty(\R^{N+1})}+\norm{|\nabla^2_x
\phi|}_{L^\infty(\R^{N+1})}).
\end{multline}
Moreover  we note  that
\begin{equation}
|\phi(x,t)-\phi(x,t-\tau)| \le \tau \norm{\pd{\phi}{t}}_{L^\infty(\R^{N+1})}
\end{equation}
and so by  Lemma \ref{lemma_radial} and a change of variables there exists a constant $C_4>0$, depending only on $N,s$, and $\la$, such that
\begin{equation}
\int_0^1\int_{\R^N\setminus B'_1}\frac{\phi(x,t)-\phi(x,t-\tau)}{|x-z|^{{\mu(\la)}}}dzd\tau
\le C_4\norm{\pd{\phi}{t}}_{L^\infty(\R^N\setminus B'_1)}|x|^{-\mu(\la)}.
\end{equation}
By \cite[Corollary 1.4]{ST} there exists a positive constant $C_5>0$, depending only on $N,s$, such that
\begin{equation}
\frac{e^{-\frac{|z|^2}{4\tau}}}{\tau^{\frac{N}{2}+s+1}} \le \frac{C_5}{|z|^{N + 2+2s}+ \tau^{\frac{N + 2+2s}{2}}} \quad \text{ for any } z \in \R^{N+1}, \tau \in (0,\infty).
\end{equation}
Then arguing, as in Lemma  \ref{lemma_radial}, we have that there
exist positive constants $C_6,C_7,C_8$ and $C_9$ depending only on
$N,s$, and $\la$, such that
\begin{multline}
\int_0^1\int_{B'_1}\frac{|\phi(x,t)-\phi(x-z,t-\tau)|}{|x-z|^{\mu(\la)}\tau^{\frac{N}{2}+s+1}}e^{-\frac{|z|^2}{4\tau}}dzd\tau \le C_6  \norm{|\nabla\phi|}_{L^\infty(\R^{N+1})} \int_0^1\int_{B'_1}\frac{(|z|+\sqrt{\tau})^{-N-2s}}{|x-z|^{\mu(\la)}}dzd\tau \\
\le C_7 \norm{|\nabla\phi|}_{L^\infty(\R^{N+1})} \int_0^1\int_{B'_1}\frac{|z|^{-N-2s}+\tau^{-\frac{N+2s}{2}}}{|x-z|^{\mu(\la)}}dzd\tau \\
\le C_8  \norm{|\nabla\phi|}_{L^\infty(\R^{N+1})} |x|^{-\mu(\la)}\int_0^1\int_{B'_1}(|z|+\sqrt{\tau})^{-N-2s} dzd\tau \le   C_9 |x|^{-\mu(\la)}.
\end{multline}
To see that the last integral is convergent, it is enough to consider  polar coordinates in $B'_1 \times (0,1)$ taking $\rho^2$ instead of $\rho$ as radial coordinate in the time coordinate.
Hence we have proved \eqref{ineq_Ls}.

For any $ \phi \in \mc{S}(\R^{N+1})$  by \eqref{def_Hs_pointwise} and \cite[Remark 2.2]{ST}
\begin{multline}
H^s(|x|^{-\mu(\la)}\phi)=\frac{1}{(4\pi)^{\frac{N}{2}}\Gamma(-s)}
\int_{-\infty}^t\int_{\R^N}\frac{|x|^{-{\mu(\la)}}\phi(x,t)-|y|^{-{\mu(\la)}}\phi(y,\sigma)}{(t-\sigma)^{\frac{N}{2}+s+1}}e^{-\frac{|x-y|^2}{4(t-\sigma)}}dyd\sigma\\
=\frac{1}{(4\pi)^{\frac{N}{2}}\Gamma(-s)}\left[\phi(x,t)\int_{-\infty}^t\int_{\R^N}\frac{|x|^{-{\mu(\la)}}-|y|^{-{\mu(\la)}}}{(t-\sigma)^{\frac{N}{2}+s+1}}e^{-\frac{|x-y|^2}{4(t-\sigma)}}dyd\sigma +
L^s(\phi)\right].
\end{multline}
Since in view of \cite[Lemma 4.1]{Fpower} and \cite[Corollary 1.4]{ST},
\begin{equation*}
\frac{1}{(4\pi)^{\frac{N}{2}}\Gamma(-s)}
\int_{-\infty}^t\int_{\R^N}\frac{|x|^{-{\mu(\la)}}-|y|^{-{\mu(\la)}}}{(t-\sigma)^{\frac{N}{2}+s+1}}e^{-\frac{|x-y|^2}{4(t-\sigma)}}dyd\sigma\\
=(-\Delta)^s|x|^{-\mu(\la)}=\frac{\la}{|x|^{2s+{\mu(\la)}}},
\end{equation*}
we conclude that \eqref{eq_ground_state} holds.
\end{proof}

As a direct application of the ground state representation, we get
the next  Kato-type inequality for the operator
$\bigg(H^s-\frac{\la}{|x|^{2s}}\bigg)$.
\begin{proposition}\label{prop_kato}
Assume that $\phi\in \mc{S}(\R^{N+1})$ is a non-negative function. Then, for any $m>1$, we have
\begin{equation}\label{ineq_kato}
L^s(\phi^m)(x,t)\le m\phi^{m-1}(x,t)L^s(\phi)(x,t).
\end{equation}
\end{proposition}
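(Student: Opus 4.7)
My plan is to derive \eqref{ineq_kato} as a direct consequence of a pointwise numerical inequality, inserted under the integral sign in the definition \eqref{def_L} of $L^s$. Since $m > 1$, the map $\tau \mapsto \tau^m$ is convex on $[0,+\infty)$, so the tangent-line bound at any $a \ge 0$ yields
\begin{equation*}
a^m - b^m \le m a^{m-1}(a - b) \quad \text{for every } b \ge 0.
\end{equation*}
Choosing $a = \phi(x,t)$ and $b = \phi(y,\sigma)$, which are both non-negative by hypothesis, one obtains the pointwise inequality
\begin{equation*}
\phi^m(x,t) - \phi^m(y,\sigma) \le m\phi^{m-1}(x,t) \bigl[\phi(x,t) - \phi(y,\sigma) \bigr]
\end{equation*}
valid for every $(y,\sigma) \in \R^N \times (-\infty,t)$.

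Next, I would multiply both sides by the non-negative kernel
\begin{equation*}
\frac{1}{|y|^{\mu(\la)}(t-\sigma)^{\frac{N}{2}+s+1}} \, e^{-\frac{|x-y|^2}{4(t-\sigma)}},
\end{equation*}
integrate over $(y,\sigma) \in \R^N \times (-\infty,t)$, and normalize by the constant $[(4\pi)^{N/2}\Gamma(-s)]^{-1}$. Since $m\phi^{m-1}(x,t)$ is independent of $(y,\sigma)$, it factors out of the integral, and comparison with \eqref{def_L} reads the resulting relation as exactly $L^s(\phi^m)(x,t) \le m\phi^{m-1}(x,t) L^s(\phi)(x,t)$.

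The conceptual content is thus the one-line convexity bound; the only delicate point is that $L^s(\phi^m)$ must be well defined in the pointwise sense of \eqref{def_L}. For $\phi \in \mc{S}(\R^{N+1})$ non-negative, $\phi^m$ is bounded with Schwartz-type decay and globally Lipschitz, but if $m$ is not an integer its second-order derivatives may blow up at the zeros of $\phi$, so the estimate \eqref{ineq_Ls} does not apply to $\phi^m$ verbatim. I would handle this by approximation: replace $\phi$ by $\phi + \e\psi$, where $\psi \in \mc{S}(\R^{N+1})$ is a strictly positive Schwartz function (for instance $\psi(x,t) = e^{-|x|^2 - t^2}$), so that $(\phi+\e\psi)^m$ is smooth with Schwartz decay and Proposition \ref{prop_ground_state} applies. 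The convexity step gives the inequality at the regularized level, and passing to the limit $\e \to 0^+$ via dominated convergence, using \eqref{ineq_Ls} uniformly in $\e$ as dominating bound, yields \eqref{ineq_kato}. The main obstacle is therefore not the inequality itself, but the bookkeeping needed to make the pointwise integral representation legitimate for $\phi^m$.
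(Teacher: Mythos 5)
Your proof is essentially the paper's: both reduce \eqref{ineq_kato} to the numerical inequality $a^m - b^m \le m a^{m-1}(a-b)$ for $a,b\ge 0$ (the paper derives it by analysing $\gamma(x)=1-x^m-m(1-x)$, you by the tangent-line characterization of convexity, which is the same estimate) and then insert it under the integral sign in \eqref{def_L}, factoring the $(y,\sigma)$-independent term $m\phi^{m-1}(x,t)$ out of the integral. The paper does not address the well-definedness of $L^s(\phi^m)$ that you raise; your approximation $\phi+\e\psi$ is a sound fix, although it is in fact unnecessary, because for a non-negative $\phi\in\mc{S}(\R^{N+1})$ the Glaeser-type inequality $|\nabla\phi|^2\le 2\norm{\nabla^2\phi}_{L^\infty}\,\phi$ shows that $\nabla^2_x(\phi^m)=m(m-1)\phi^{m-2}\nabla_x\phi\otimes\nabla_x\phi+m\phi^{m-1}\nabla^2_x\phi$ and $\partial_t(\phi^m)=m\phi^{m-1}\partial_t\phi$ are globally bounded for every $m>1$, so the estimate \eqref{ineq_Ls} (whose proof only uses sup-bounds on $\phi$, $\nabla_{(x,t)}\phi$, $\nabla^2_x\phi$ and rapid decay) applies to $\phi^m$ directly.
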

\begin{proof}
We start by showing the next algebraic inequality
\begin{equation}\label{ineq_ab}
a^m-b^m\le ma^{m-1}(a-b) \quad \text{ for any } a,b\ge 0.
\end{equation}
To this end, let $\gamma(x)=1-x^m-m(1-x)$ for any $x \in [0,+\infty)$.  Since  $\gamma'(x)=m(1-x^{m-1})$,  it is clear that $\gamma(x)\le \gamma(1)=0$ for any $x \in [0,+\infty)$. Taking
$x=b/a$ we obtain \eqref{ineq_ab}. Hence
\begin{multline}
L^s(\phi^m)(x,t)=
\frac{1}{(4\pi)^{\frac{N}{2}}\Gamma(-s)}\int_{-\infty}^t\int_{\R^N}\frac{\phi^m(x,t)-\phi^m(y,\sigma)}{|y|^{{\mu(\la)}}
(t-\sigma)^{\frac{N}{2}+s+1}}e^{-\frac{|x-y|^2}{4(t-\sigma)}}dyd\sigma\\
\le m\phi^{m-1}(x,t)\dyle\frac{1}{(4\pi)^{\frac{N}{2}}\Gamma(-s)}\int_{-\infty}^t\int_{\R^N}\frac{\phi(x,t)-\phi(y,\sigma)}{|y|^{{\mu(\la)}}
(t-\sigma)^{\frac{N}{2}+s+1}}e^{-\frac{|x-y|^2}{4(t-\sigma)}}dyd\sigma\\
=m\phi^{m-1}(x,t)L^s(\phi)(x,t),
\end{multline}
and so we have proved \eqref{ineq_kato}.
\end{proof}

\begin{remark}\label{remark_inversion_Ls}
It is an open and interesting problem to obtain an inversion
formula for the operator $L^s$. It is object of
current investigation by the authors. The main difficulty is the
loss of stability using the Fourier transform of convolutions as
in \cite{T_pointwise}, under the presence of singular kernels.

Hence, we will follow a different approach, based on  the
extension results proved in \cite{BCS}, to construct positive
solutions and supersolutions of problem
\eqref{prob_frac_heat_hardy}.
\end{remark}

\section{A comparison principle}\label{section_comparison}
In this section we are going to prove a comparison principle for weak solutions using the extension procedure developed in \cite{BCS}.
\subsection{An extension result}\label{subsec_extension}
We will use the extension procedure of  \cite{BCS} (see also \cite{BG,NS,ST}) to localize the problem.
We denote as $(z,t)=(x,y,t)$ the variable in   $\R^{N} \times(0,+\infty) \times \R$ and $\R^{N+1}_+:=\R^{N} \times (0,+\infty)$.
Furthermore we use the symbols $\nabla $ and $\dive$ to  denote the gradient, respectively the divergence, with respect to the space variable $z=(x,y)$.

For any $p \in [1,\infty)$ and any open set $E \subseteq \R^{N+1}_+$, let
\begin{equation*}
L^p(E,y^{1-2s}):=\left\{V:E\to \R \text{ measurable}: \int_{E} y^{1-2s}|V|^p \, dz<+\infty\right\}.
\end{equation*}
If $E$ is an  open Lipschitz set contained in $\R^{N+1}_+$,   $H^1(E,y^{1-2s})$ is defined as  the completion of $C_c^{\infty}(\overline{E})$ with  respect to the norm
\begin{equation*}
\norm{\phi}_{H^1(E,y^{1-2s})}:=\left(\int_{E} y^{1-2s}(\phi^2 +|\nabla \phi|^2)\, dz\right)^{\frac12}.
\end{equation*}
In view of   \cite[Theorem 11.11, Theorem 11.2, 11.12 Remarks(iii)]{OK}  and the extension theorems for weighted Sobolev
spaces with weights in the Muckenhoupt's $\mathcal{A}_2$ class proved in \cite{CSK},  the space $H^1(E,y^{1-2s})$ admits a concrete characterization   as
\begin{equation*}
H^1(E,y^{1-2s})=\left\{V \in W^{1,1}_{loc}(E):\int_{E} y^{1-2s} (V^2+|\nabla V|^2)\, dz< +\infty\right\}.
\end{equation*}
By  \cite{LM} there exists a linear and continuous trace operator
\begin{equation}\label{def_trace}
\Tr:H^1(\R_+^{N+1},y^{1-2s}) \to W^{2,s}(\R^{N}).
\end{equation}
For any  $w \in \mathop{\rm{Dom}}(H^s)$ and any $(x,y,t) \in \R^{N+1}_+ \times \R$,  let
\begin{equation}\label{def_W}
W(x,y,t):=\frac{y^{2s}}{4^{\frac{N}{2}+s} \pi^{\frac{N}{2}}\Gamma(s)}\int_{0}^\infty \int_{\R^N} \frac{e^{-\frac{|\xi|^2+y^2}{4\tau}}}{\tau^{\frac{N}{2}+1+s}} w(x-\xi,t-\tau) \, d\xi d\tau.
\end{equation}
The function $W$ is an extension of $w$ which possesses good
properties, as it has been proved in  \cite{BG} and \cite{BCS},
see also \cite[Theorem 1]{NS}, \cite[Theorem 1.7]{ST} and
\cite[Section 3, Section 4]{BG}. More precisely there holds the
following theorem.
\begin{theorem}\cite[Theorem 4.1, Remark 4.3]{BCS}, \cite[Corollary 3.2]{BG} \label{theorem_extension}
Let $w \in \mathop{\rm{Dom}}(H^s)$ and let $W$ be  as in \eqref{def_W}. Then $W \in L^2(\R,H^1(\R^{N+1}_+,y^{1-2s}))$ and  weakly solves
\begin{equation*}
\begin{cases}
y^{1-2s}W_t-\dive(y^{1-2s}\nabla W)=0, &\text{ in } \R_+^{N+1} \times \R,\\
\Tr(W(\cdot,t))=w(\cdot,t), &\text{ on }\R^N, \text{ for a.e. } t \in \R,\\
-\lim\limits_{y \to 0^+}y^{1-2s}\pd{W}{y}= \kappa_s H^s(w), &\text{on }\R^N\times \R,
\end{cases}
\end{equation*}
in the sense that  for a.e. $T>0$ and any  $\phi \in C^\infty_c (\overline{\R_+^{N+1}} \times[0,T])$
\begin{multline}\label{eq_frac_extended_without_eq}
\int_{\R}\left(\int_{\R_+^{N+1}}y^{1-2s} W\phi_t  \, dz \, \right)dt -\int_{\R_+^{N+1}}y^{1-2s} W(z,T)\phi(z,T)  \, dz +\int_{\R_+^{N+1}}y^{1-2s} W(z,0)\phi(z,0)  \, dz\\
=\int_{\R}\left(\int_{\R_+^{N+1}}y^{1-2s} \nabla W \cdot \nabla \phi \, dz \, \right)dt-\kappa_s\df{(\mathop{\rm{Dom}}(H^s))^*}{H^s(w)}{\phi(\cdot,0,\cdot)}{\mathop{\rm{Dom}}(H^s)}, \notag
\end{multline}
with $\kappa_s:=\frac{\Gamma(1-s)}{2^{2s-1} \Gamma(s)}$.
\end{theorem}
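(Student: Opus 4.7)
My plan is to pass to the Fourier side in the flat variables $(x,t)$ and reduce the extension problem to an explicit one-parameter family of modified Bessel ODEs in $y$, parametrized by $(\xi,\theta)$. I would first take $w\in\mc{S}(\R^{N+1})$ and recover the general case $w\in\mathop{\rm{Dom}}(H^s)$ at the end by density and continuity in the $\mathop{\rm{Dom}}(H^s)$-norm. The kernel in \eqref{def_W} factorises as a Gaussian in $x-\xi$ times a Gaussian weight in $y,\tau$, so applying $\mc{F}_{x,t}$ and evaluating the $\xi$-Gaussian integral yields
\begin{equation*}
\widehat{W}(\xi,y,\theta) = \frac{y^{2s}}{4^{s}\Gamma(s)}\left(\int_{0}^{\infty}\tau^{-1-s}\,e^{-\tau(|\xi|^2+i\theta)-\frac{y^2}{4\tau}}\,d\tau\right)\widehat{w}(\xi,\theta).
\end{equation*}
The $\tau$-integral is one of the standard representations of the modified Bessel function $K_s$, valid whenever $\Re(|\xi|^2+i\theta)\ge 0$; writing $\zeta:=\sqrt{|\xi|^2+i\theta}$ (principal branch) and $\Phi_s(r):=\frac{2^{1-s}}{\Gamma(s)}\,r^{s}K_s(r)$, the identity reduces to $\widehat{W}(\xi,y,\theta)=\Phi_s(y\zeta)\,\widehat{w}(\xi,\theta)$.

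I would then verify on the Fourier side each of the three conditions in the theorem. The bulk equation, after expanding the divergence, reads $W_t=\Delta_x W+W_{yy}+\frac{1-2s}{y}W_y$; in the Fourier variables it becomes $\Phi_s''(r)+\frac{1-2s}{r}\Phi_s'(r)=\Phi_s(r)$ with $r=y\zeta$, which is an immediate consequence of Bessel's equation for $K_s$. The trace follows from the standard asymptotic $K_s(r)\sim\frac{\Gamma(s)}{2}(r/2)^{-s}$ as $r\to 0^+$, giving $\Phi_s(0^+)=1$ and hence $\widehat{W}(\xi,0,\theta)=\widehat{w}(\xi,\theta)$. For the Neumann-type limit I would use the two-term expansion $K_s(r)=\frac{\Gamma(s)}{2}(r/2)^{-s}+\frac{\Gamma(-s)}{2}(r/2)^{s}+O(r^{2-s})$, which implies $\Phi_s(r)=1+\frac{\Gamma(-s)}{2^{2s}\Gamma(s)}r^{2s}+O(r^{2})$ and therefore
\begin{equation*}
-\lim_{y\to 0^+}y^{1-2s}\,\pd{\widehat{W}}{y}(\xi,y,\theta) = \frac{-2s\,\Gamma(-s)}{2^{2s}\,\Gamma(s)}\,(i\theta+|\xi|^2)^{s}\,\widehat{w}(\xi,\theta)= \kappa_s\,(i\theta+|\xi|^2)^{s}\,\widehat{w}(\xi,\theta),
\end{equation*}
where the last equality uses the reflection identity $\Gamma(1-s)=-s\,\Gamma(-s)$ to match the coefficient with $\kappa_s=\Gamma(1-s)/(2^{2s-1}\Gamma(s))$; by \eqref{def_Hs_fourier}, the right-hand side is $\kappa_s\widehat{H^s w}$.

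For the regularity $W\in L^2(\R,H^1(\R^{N+1}_+,y^{1-2s}))$, Plancherel in $(x,t)$ reduces the task to estimating the integrals $\int_0^\infty y^{1-2s}(1+|\xi|^2)|\Phi_s(y\zeta)|^2\,dy$ and $\int_0^\infty y^{1-2s}|\zeta\,\Phi_s'(y\zeta)|^2\,dy$; after the change of variables $r=y|\zeta|$ and the bounds $K_s(r)\lesssim r^{-1/2}e^{-c\,\Re r}$ for large $r$ together with the small-$r$ asymptotics above, each is dominated by $C(|\xi|^{2s}+|\theta|^{s})$, and Proposition \ref{prop_DomHs_Xs} closes the estimate. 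The weak formulation is then obtained by testing \eqref{def_W} against $\phi\in C_c^{\infty}(\overline{\R^{N+1}_+}\times[0,T])$ and integrating by parts in $z$ and in $t$: the interior terms reproduce the bulk equation, the boundary term at $y=0$ yields precisely $\kappa_s\,\df{(\mathop{\rm{Dom}}(H^s))^*}{H^s(w)}{\phi(\cdot,0,\cdot)}{\mathop{\rm{Dom}}(H^s)}$ thanks to the Neumann computation, and the boundary contributions at $t=0$ and $t=T$ give the remaining end-point terms. Density of $\mc{S}(\R^{N+1})$ in $\mathop{\rm{Dom}}(H^s)$ then extends the identity to arbitrary $w\in\mathop{\rm{Dom}}(H^s)$.

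The main technical obstacle is the rigorous handling of the complex argument $\zeta=\sqrt{|\xi|^2+i\theta}$, especially along the degenerate ray $\xi=0$ where $\zeta^2$ is purely imaginary: one has to choose a consistent principal branch (well defined since $\Re(|\xi|^2+i\theta)=|\xi|^2\ge 0$), justify the interchange of Fourier transform and $\tau$-integral via dominated convergence using the decay $e^{-y^2/(4\tau)}$, and track the normalization $\kappa_s$ carefully by combining the Bessel asymptotics with the reflection formula $\Gamma(s)\Gamma(1-s)=\pi/\sin(\pi s)$. Once these issues are settled, the density step and the weak integration by parts are essentially routine.
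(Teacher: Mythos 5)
Note first that the paper does not give its own proof of Theorem \ref{theorem_extension} — it is quoted from \cite{BCS} and \cite{BG} — so there is no internal argument to compare against; what you have written is a reconstruction. Your Fourier--Bessel route is a legitimate and classical way to obtain Caffarelli--Silvestre type extension results, and the central computations are correct: taking $\mathcal{F}_{x,t}$ of \eqref{def_W} does give $\widehat W(\xi,y,\theta)=\Phi_s(y\zeta)\,\widehat w(\xi,\theta)$ with $\zeta=\sqrt{|\xi|^2+i\theta}$ and $\Phi_s(r)=\frac{2^{1-s}}{\Gamma(s)}r^sK_s(r)$; the ODE $\Phi_s''+\frac{1-2s}{r}\Phi_s'=\Phi_s$ is exactly the modified Bessel equation in disguise; $\Phi_s(0^+)=1$ yields the trace; and the two-term expansion of $K_s$ together with $\Gamma(1-s)=-s\,\Gamma(-s)$ produces the constant $\kappa_s=\Gamma(1-s)/(2^{2s-1}\Gamma(s))$ in the Neumann condition. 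These are the same formulas that appear in \cite{ST,NS,BG,BCS}, so the approach is in the same spirit as the sources the paper cites.

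There is, however, one genuine gap, and it concerns the regularity claim $W\in L^2(\R,H^1(\R^{N+1}_+,y^{1-2s}))$. You assert that $\int_0^\infty y^{1-2s}(1+|\xi|^2)|\Phi_s(y\zeta)|^2\,dy$ is dominated by $C(|\xi|^{2s}+|\theta|^s)$. This fails as $\zeta\to 0$. Since $\Phi_s(0)=1$ and $\Re\zeta\ge|\zeta|/\sqrt2$ on the principal branch, the substitution $r=y|\zeta|$ gives
\begin{equation}
\int_0^\infty y^{1-2s}|\Phi_s(y\zeta)|^2\,dy\ \simeq\ |\zeta|^{2s-2}\int_0^\infty r^{1-2s}\big|\Phi_s\big(re^{i\arg\zeta}\big)\big|^2\,dr\ \simeq\ |\zeta|^{2s-2},
\end{equation}
which blows up near $\zeta=0$ while your claimed upper bound $|\xi|^{2s}+|\theta|^s\simeq|\zeta|^{2s}$ vanishes there. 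Hence the zeroth-order piece of the $H^1(\R^{N+1}_+,y^{1-2s})$-norm of $W$ is, after Plancherel, $\int\big||\xi|^2+i\theta\big|^{s-1}|\widehat w|^2\,d\xi\,d\theta$, and this is not controlled by $\norm{w}_{\mathop{\rm{Dom}}(H^s)}$: taking $|\widehat w|^2=\big||\xi|^2+i\theta\big|^{-\alpha}\chi_{\{|\xi|^2+|\theta|<1\}}$ with $s+\frac N2<\alpha<1+\frac N2$ (nonempty since $s<1$) produces $w\in\mathop{\rm{Dom}}(H^s)$ for which $\int_\R\int_{\R^{N+1}_+}y^{1-2s}W^2=+\infty$. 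The gradient part of the estimate is fine: using $\Phi_s'(r)\sim c\,r^{2s-1}$ near the origin and the exponential decay of $K_s$, one indeed gets $\int_0^\infty y^{1-2s}\big(|\xi|^2|\Phi_s(y\zeta)|^2+|\zeta|^2|\Phi_s'(y\zeta)|^2\big)\,dy\lesssim|\zeta|^{2s}$. So the honest conclusion of your argument is $\nabla W\in L^2(\R,L^2(\R^{N+1}_+,y^{1-2s}))$ and $W\in L^2_{\mathrm{loc}}(\R,H^1_{\mathrm{loc}})$, not membership in the inhomogeneous $L^2(\R,H^1)$-space as literally stated. To be fair, this imprecision is already in the theorem as quoted, and the rest of the paper only ever uses $\nabla W$ and the boundary trace, never the global weighted $L^2$-norm of $W$, so the gap is harmless downstream — but it should be stated correctly rather than patched with an estimate that does not hold.
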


\subsection{A comparison principle}\label{subsection_comparison}
Let us begin by proving a weak maximum principle for the extended problem.
\begin{proposition}\label{prop_comparison_extended_0}
Let $T>0$ and suppose that $W \in L^2((0,T),H^1(\R^{N+1}_+,y^{1-2s}))$ is a weak positive subsolution of the problem
\begin{equation}\label{prob_extended_comparison_0}
\begin{cases}
y^{1-2s}W_t-\dive(y^{1-2s}\nabla W)= 0, &\text{ in } \R_+^{N+1} \times (0,T),\\
-\lim\limits_{y \to 0^+}y^{1-2s}\pd{W}{y}= 0, &\text{ on }\R^N\times (0,T),\\
W(z,0)=0, &\text{ on }\R_+^{N+1},
\end{cases}
\end{equation}
in the sense that for  any positive  $\phi \in C^\infty_c (\overline{\R_+^{N+1}} \times[0,T])$
\begin{multline}\label{eq_extended_comparison_0}
\int_{0}^{T}\left(\int_{\R_+^{N+1}}y^{1-2s}  \nabla  W \cdot \nabla \phi \, dz \, \right)dt-\int_{0}^{T}\left(\int_{\R_+^{N+1}}y^{1-2s}  W\phi_t \, dz \, \right)dt\\
+\int_{\R_+^{N+1}}y^{1-2s} W(z,T)\phi(z,T)  \, dz \le 0.
\end{multline}
Then $W\equiv 0$ a.e. in $\R^{N+1}_+\times (0,T)$.
\end{proposition}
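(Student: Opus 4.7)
The strategy is the classical parabolic energy method: test the subsolution inequality with $W$ itself, derive an energy identity, and exploit the zero initial datum. If we could formally take $\phi = W$, the inequality \eqref{eq_extended_comparison_0} would yield, for a.e. $t_0 \in (0,T)$,
\[
\tfrac{1}{2}\int_{\R^{N+1}_+} y^{1-2s} W^2(z,t_0)\, dz + \int_0^{t_0}\!\!\int_{\R^{N+1}_+} y^{1-2s}|\nabla W|^2\, dz\, dt \le 0,
\]
whence both non-negative terms would vanish and $W \equiv 0$ would follow. Since $W$ is only known to lie in $L^2((0,T), H^1(\R^{N+1}_+, y^{1-2s}))$, the bulk of the work consists in legalising this choice of test function.

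First, by a standard density argument one extends the class of admissible test functions in \eqref{eq_extended_comparison_0} from $C^\infty_c(\overline{\R^{N+1}_+}\times[0,T])$ to all non-negative $\phi \in L^2((0,T), H^1(\R^{N+1}_+, y^{1-2s}))$ with $\phi_t \in L^2((0,T), L^2(\R^{N+1}_+, y^{1-2s}))$. Then, to overcome the lack of time regularity of $W$, one introduces the Steklov averages
\[
W_h(z,t) := \frac{1}{h}\int_t^{t+h} W(z,\sigma)\, d\sigma, \qquad h>0,
\]
and derives from the extended formulation the time-averaged inequality: for a.e. $t \in (0, T-h)$ and every non-negative $v \in H^1(\R^{N+1}_+, y^{1-2s})$,
\[
\int_{\R^{N+1}_+} y^{1-2s}(W_h)_t\, v \, dz + \int_{\R^{N+1}_+} y^{1-2s}\nabla W_h \cdot \nabla v \, dz \le 0.
\]

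One then chooses $v = W_h(\cdot,t)$, which is non-negative by the positivity of $W$, uses the pointwise identity $W_h (W_h)_t = \tfrac12 (W_h^2)_t$, and integrates in $t$ from $0$ to a fixed $t_0 \in (0, T)$. Exploiting $W_h(\cdot,0) \to 0$ as $h \to 0^+$, which is guaranteed by the initial condition $W(\cdot,0)=0$, and then passing to the limit $h \to 0^+$ using strong convergence $W_h \to W$ in $L^2((0,T), H^1(\R^{N+1}_+, y^{1-2s}))$ together with weak lower semicontinuity of the weighted norms, one obtains the displayed energy inequality. Both non-negative terms must then vanish, giving $W(\cdot, t_0) = 0$ a.e. for a.e. $t_0 \in (0,T)$, and therefore $W \equiv 0$ in $\R^{N+1}_+ \times (0,T)$.

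The main technical obstacle is establishing the Steklov-averaged inequality rigorously: one has to verify that the time averaging commutes with the weighted divergence-form operator and with the (vanishing) co-normal boundary condition at $\{y=0\}$, so that no spurious boundary contribution along $\{y=0\}$ nor at the temporal endpoints appears. Once the averaged inequality is in hand, Steps 3 and 4 are routine manipulations of parabolic energy estimates.
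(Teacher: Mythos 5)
Your proof follows the same energy-method strategy as the paper: test the inequality with $W$ itself, exploit the vanishing initial datum to turn $-\int y^{1-2s} W W_t$ into $-\tfrac12\int y^{1-2s}W^2(\cdot,T)$, and conclude that both nonnegative terms in the resulting inequality must vanish. The only difference is the device used to legitimise $W$ as a test function despite its low time regularity: you use Steklov averages, whereas the paper simply invokes a Faedo--Galerkin-type approximation to assume $W_t\in L^2((0,T),L^2(\R^{N+1}_+,y^{1-2s}))$; both are standard, and your version is the more explicit of the two, while also correctly observing (as the paper does implicitly) that nonnegativity of $W$ is what makes $W$, and hence $W_h$, an admissible test function in the sign-constrained weak formulation.
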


\begin{proof}

By an approximated procedure, for example the Faedo-Galerkin method, we may suppose that $W_t \in  L^2((0,+\infty),L^2(\R^{N+1}_+,y^{1-2s}))$.
Hence , we may test \eqref{eq_extended_comparison_0} with $W$. Since
\begin{equation}
\int_{0}^{T}\left(\int_{\R_+^{N+1}}y^{1-2s}  W W_t \, dz \, \right)dt=\frac{1}{2}\int_{\R^{N+1}_+}y^{1-2s}  W^2(z,T)\, dz
\end{equation}
we obtain
\begin{equation}
\int_{0}^{T}\left(\int_{\R_+^{N+1}}y^{1-2s}  |\nabla W |^2 \, dz \, \right)dt +\frac{1}{2}\int_{\R^{N+1}_+}y^{1-2s}  W^2(z,T)\, dz \le 0,
\end{equation}
in view of \eqref{eq_extended_comparison_0}. We conclude that  $W\equiv 0$ in $\R^{N+1}_+$.
\end{proof}

\begin{corollary}\label{corollary_comparison_extended}
Let $T>0$ and suppose that   $V,W \in
L^2((0,T),H^1(\R^{N+1}_+,y^{1-2s}))$  satisfy the problem
\begin{equation}\label{prob_extended_comparison}
\begin{cases}
y^{1-2s}W_t-\dive(y^{1-2s}\nabla W)\le y^{1-2s}V_t-\dive(y^{1-2s}\nabla V), &\text{ in } \R_+^{N+1} \times (0,T),\\
-\lim\limits_{y \to 0^+}y^{1-2s}\pd{W}{y}\le -\lim\limits_{y \to 0^+}y^{1-2s}\pd{V}{y}, &\text{ on }\R^N\times (0,T),\\
 W(z,0) \le V(z,0) &\text{ a.e. } z \in \R_+^{N+1},
\end{cases}
\end{equation}
in a weak sense. Then $W\le V$ a.e. in $\R^{N+1}_+\times (0,T)$.
Furthermore, letting $v:=\Tr(V)$ and $w:=\Tr(W)$, we have that $w\le v $ a.e.  in $\R^{N} \times(0,+\infty)$.

\end{corollary}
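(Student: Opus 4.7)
\noindent\emph{Proof proposal.}
The plan is to reduce the statement to the weak maximum principle of Proposition \ref{prop_comparison_extended_0} applied to the positive part of the difference $U := W - V$. By linearity of the weak formulation \eqref{eq_extended_comparison_0}, $U$ belongs to $L^2((0,T), H^1(\R_+^{N+1}, y^{1-2s}))$ and satisfies the homogeneous extended inequality together with $U(\cdot, 0) \le 0$ a.e.\ on $\R_+^{N+1}$. It then suffices to show that $U^+ := \max(U, 0) \equiv 0$.

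Following the Faedo--Galerkin approach used in the proof of Proposition \ref{prop_comparison_extended_0}, I would first pass to the approximation in which $U_t \in L^2((0,T), L^2(\R_+^{N+1}, y^{1-2s}))$. Because $y^{1-2s}$ is a Muckenhoupt $\mathcal{A}_2$ weight, the Lipschitz composition $\tau \mapsto \tau^+$ preserves the weighted Sobolev regularity, with $\nabla U^+ = \chi_{\{U > 0\}} \nabla U$ and $(U^+)_t = \chi_{\{U > 0\}} U_t$. Testing the inequality satisfied by $U$ against $U^+$ (multiplied by a spatial cut-off $\eta_R$ which is then sent to $1$ by dominated convergence), and using $U(\cdot, 0) \le 0$, should yield the energy estimate
\begin{equation*}
\int_0^T \int_{\R_+^{N+1}} y^{1-2s} |\nabla U^+|^2 \, dz\, dt + \frac{1}{2} \int_{\R_+^{N+1}} y^{1-2s} (U^+)^2(z, T)\, dz \le 0,
\end{equation*}
forcing $U^+ \equiv 0$, so that $W \le V$ a.e.\ on $\R_+^{N+1} \times (0, T)$, and hence on $\R_+^{N+1} \times (0, +\infty)$ by the arbitrariness of $T$.

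For the conclusion about the traces, since $V - W \ge 0$ a.e., slice-wise for a.e.\ $t$ one has $V(\cdot, t) - W(\cdot, t) \ge 0$ in $H^1(\R_+^{N+1}, y^{1-2s})$. A density argument -- approximating a nonnegative element of the weighted Sobolev space by nonnegative smooth functions via a nonnegative mollifier -- combined with the continuity of the trace operator in \eqref{def_trace} gives $\Tr(V(\cdot, t)) \ge \Tr(W(\cdot, t))$ a.e.\ on $\R^N$, that is, $v \ge w$ a.e.\ on $\R^N \times (0, +\infty)$.

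The main obstacle will be making rigorous the test-function manipulation: the weak formulation only admits $\phi \in C_c^\infty(\overline{\R_+^{N+1}} \times [0, T])$, whereas $U^+$ is neither smooth nor compactly supported. A three-level approximation -- Galerkin truncation of $U$, smooth convex regularization $G_\e$ of $\tau \mapsto \tau^+$, and a spatial cut-off $\eta_R$ -- together with the successive passages to the limits $\e \to 0$, $R \to \infty$, and Galerkin index $\to \infty$, should close the argument, mirroring the approximation implicit in the proof of Proposition \ref{prop_comparison_extended_0}.
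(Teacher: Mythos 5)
Your proposal is correct and follows essentially the same route as the paper: reduce to Proposition \ref{prop_comparison_extended_0} applied to $U^+$ (the paper compresses your Galerkin/convex-regularization/cut-off step into an appeal to the Kato inequality, which is what it unpacks to), and then obtain the trace inequality from nonnegativity of the extension by mollification and continuity of the trace operator. The one technical device the paper uses that you leave implicit is the even reflection $\widetilde U$ in the $y$-variable before mollifying, so that the convolution $\rho_\e \ast \widetilde U$ converges in $H^1(\R^{N+1},|y|^{1-2s})$ (via \cite[Lemma 1.5]{K_weighted_Sobolev}) without boundary effects on $\{y=0\}$; your ``density argument'' should be read as requiring exactly this reflection.
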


\begin{proof}
Let $U:=W-V$. By the Kato inequality, and
\eqref{prob_extended_comparison}, it is easy to see that $U^+$
satisfies \eqref{prob_extended_comparison_0}. Hence $U^+\equiv 0$
in $\R^{N+1}_+\times (0,T)$ thanks to Proposition
\ref{prop_comparison_extended_0}, that is $W\le V$ a.e. in
$\R^{N+1}_+\times (0,T)$. Let us define
\begin{equation}\label{def_tilde_U}
\widetilde U(z,t)=\widetilde U(x,y,t):=
\begin{cases}
U(x,y,t), & \text { if }y>0,\\
U(x,-y,t), & \text { if }y<0,
\end{cases}
\end{equation}
and let $\{\rho_\e\}_{\e >0}$ be a family of standard mollifiers on
$\R^{N+1}$. By \cite[Lemma 1.5]{K_weighted_Sobolev}, $\rho_\e \ast
\widetilde U \to \widetilde U$ in $H^1(\R^{N+1},|y|^{1-2s})$ and
$\rho_\e \ast \widetilde{U} \ge 0 $  in  $\R^{N+1}\times
(0,+\infty)$. Since $\rho_\e \ast \widetilde U$ is smooth in
particular $\Tr(\rho_\e \ast  \widetilde U)\ge 0$ in
$\R^{N}\times (0,+\infty)$. Up to a subsequence, we may pass to
the limit as $\e \to 0^+$ and deduce that $\Tr(U) \ge 0$ a.e. in
$\R^{N} \times(0,+\infty)$ thanks to the continuity of the trace
operator defined in \eqref{def_trace}. We conclude that  $w\le v $
in $\R^{N} \times(0,+\infty)$.

\end{proof}

From Corollary   \ref{corollary_comparison_extended} and Theorem \ref{theorem_extension} we can deduce the following  weak comparison principle for $H^s$.
\begin{corollary}\label{corollary_comparison_Hs}
Suppose that $w,v \in \mathop{\rm{Dom}}(H^s)$ solve
\begin{equation}\label{prob_comparison_Hs}
\begin{cases}
H^s(w)\le H^s(v), & \text { in } \R^N \times (0,+\infty),\\
w=v=0, & \text { on } \R^N \times (-\infty,0],
\end{cases}
\end{equation}
in the sense that  $w= v=0$ on $\R^N \times (-\infty,0]$ and
\begin{equation}\label{eq_comparison_Hs}
\sideset{_{(\mathop{\rm{Dom}}(H^s))^*}}{_{\mathop{\rm{Dom}}(H^s)}}{\mathop{\left\langle H^s(w), \phi\right\rangle}}
\le \sideset{_{(\mathop{\rm{Dom}}(H^s))^*}}{_{\mathop{\rm{Dom}}(H^s)}}{\mathop{\left\langle H^s(v), \phi\right\rangle}}
\end{equation}
for any positive $\phi \in \mc{S}(\R^{N+1})$. Then $w\le v $  a.e. in $\R^{N+1}$.
\end{corollary}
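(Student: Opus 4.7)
The plan is to pass to the extended problem via Theorem \ref{theorem_extension} and then apply the weak comparison principle from Corollary \ref{corollary_comparison_extended}. Let $W$ and $V$ be the extensions of $w$ and $v$ respectively, defined through the formula \eqref{def_W}. By Theorem \ref{theorem_extension}, both $W$ and $V$ belong to $L^2((0,T),H^1(\R^{N+1}_+,y^{1-2s}))$ for every $T>0$ and satisfy the degenerate parabolic equation $y^{1-2s}W_t-\dive(y^{1-2s}\nabla W)=0$ (and likewise for $V$), with the Neumann-type identity
\begin{equation*}
-\lim_{y\to 0^+}y^{1-2s}\pd{W}{y}=\kappa_s H^s(w),\qquad -\lim_{y\to 0^+}y^{1-2s}\pd{V}{y}=\kappa_s H^s(v),
\end{equation*}
holding in the dual of $\mathop{\rm{Dom}}(H^s)$.

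I would then verify the three hypotheses of Corollary \ref{corollary_comparison_extended} on an arbitrary finite time window $(0,T)$. First, since $W$ and $V$ both solve the homogeneous extension equation, the differential inequality in \eqref{prob_extended_comparison} holds trivially as an equality. Second, the boundary inequality $-\lim_{y\to 0^+}y^{1-2s}\pd{W}{y}\le -\lim_{y\to 0^+}y^{1-2s}\pd{V}{y}$ follows directly from the assumption $H^s(w)\le H^s(v)$ tested against the same class of non-negative test functions in $\mc{S}(\R^{N+1})$, using the identification above (density gives the required class of test traces). Third, from the explicit representation \eqref{def_W}, evaluating at $t=0$ integrates $w(x-\xi,-\tau)$ with $\tau>0$, so that $W(z,0)=0=V(z,0)$ by the hypothesis that $w=v=0$ on $\R^N\times(-\infty,0]$.

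With these three conditions in place, Corollary \ref{corollary_comparison_extended} yields $W\le V$ a.e.\ in $\R^{N+1}_+\times(0,T)$, and the continuity of the trace operator \eqref{def_trace} implies $w=\Tr(W)\le \Tr(V)=v$ a.e.\ in $\R^N\times(0,T)$. Since $T>0$ is arbitrary and $w=v=0$ on $\R^N\times(-\infty,0]$, we conclude $w\le v$ a.e.\ in $\R^{N+1}$.

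The main technical nuance, rather than a deep obstacle, is matching the weak formulations: the formula \eqref{eq_frac_extended_without_eq} in Theorem \ref{theorem_extension} uses test functions $\phi\in C^\infty_c(\overline{\R^{N+1}_+}\times[0,T])$, while the hypothesis \eqref{eq_comparison_Hs} uses $\phi\in\mc{S}(\R^{N+1})$; one needs a density argument to ensure that the inequality of the co-normal derivatives can be tested against traces of the functions admissible in \eqref{eq_extended_comparison_0}. This is standard but deserves explicit mention, since Corollary \ref{corollary_comparison_extended} relies on subtracting $W$ and $V$ and applying Proposition \ref{prop_comparison_extended_0} to $(W-V)^+$ via the Kato inequality for the weighted divergence operator.
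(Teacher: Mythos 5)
Your proposal is correct and follows exactly the route the paper indicates (the paper states the corollary without a written proof, saying only that it follows from Theorem~\ref{theorem_extension} and Corollary~\ref{corollary_comparison_extended}): extend $w,v$ by \eqref{def_W}, check the three hypotheses of Corollary~\ref{corollary_comparison_extended}, and take traces. Your added remark on the density matching between $\mc{S}(\R^{N+1})$ test functions in \eqref{eq_comparison_Hs} and $C^\infty_c(\overline{\R^{N+1}_+}\times[0,T])$ test functions in \eqref{eq_extended_comparison_0} is a reasonable fleshing-out of a step the paper leaves implicit.
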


Comparing a weak supersolution of problem \eqref{prob_frac_heat_hardy} with $0$, from Corollary \ref{corollary_comparison_Hs} we have the following.
\begin{corollary}\label{corollary_positivity}
Suppose that $f \in L^2(\R^{N+1})$, $f(t,x)=0$ a.e. in
$(-\infty,0)\times \R^{N}$ and that $f$  is non-negative. Then any
weak supersolution $w$ of problem \eqref{prob_frac_heat_hardy} in
the sense given by Definition \ref{def_weak_solutions}, satisfies $w\ge 0$ a.e. in $\R^{N+1}$.
\end{corollary}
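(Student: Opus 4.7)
The natural idea, hinted at by the paragraph preceding the statement, is to compare $w$ with the trivial solution $0$ via Corollary~\ref{corollary_comparison_Hs}. A direct application fails because the supersolution inequality $H^s(w)\ge \tfrac{\la}{|x|^{2s}}w+w^p+f$ does not obviously give $H^s(w)\ge 0$: the Hardy term can be negative wherever $w$ is negative. My plan is instead to redo the energy argument behind Proposition~\ref{prop_comparison_extended_0} on the extension $W$ of $w$, using the fractional Hardy--Leray inequality of Theorem~\ref{DH} (in its trace form) to absorb the potential term. The strict bound $\la<\Lambda_{N,s}$ is exactly what makes this absorption possible.

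Let $W$ be the extension of $w$ provided by Theorem~\ref{theorem_extension}. Combining the extended weak formulation with the supersolution inequality and using $W(\cdot,0)=0$, one obtains, for every $\phi\in L^2(0,T;H^1(\R^{N+1}_+,y^{1-2s}))$ whose trace on $\{y=0\}$ is non-negative,
\begin{multline*}
\int_0^T\!\!\int_{\R^{N+1}_+}\!\! y^{1-2s}\bigl(\nabla W\cdot\nabla\phi - W\phi_t\bigr)\,dz\,dt + \int_{\R^{N+1}_+}\!\! y^{1-2s}W(z,T)\phi(z,T)\,dz \\
\ge \kappa_s\!\int_0^T\!\!\int_{\R^N}\!\!\Bigl(f + \tfrac{\la}{|x|^{2s}}w + w^p\Bigr)\phi(\cdot,0,\cdot)\,dx\,dt.
\end{multline*}
Following the Faedo--Galerkin approximation used in Proposition~\ref{prop_comparison_extended_0}, I can take $\phi=W^-$, whose trace is $w^-\ge 0$. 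The identities $\nabla W\cdot\nabla W^- = -|\nabla W^-|^2$, $W\cdot W^- = -(W^-)^2$ and $W(W^-)_t = -\tfrac12\partial_t(W^-)^2$, together with $fw^-\ge 0$, $w^pw^-\ge 0$ (interpreting $w^p$ as $(w^+)^p$, which vanishes on $\{w<0\}$), and $\tfrac{\la}{|x|^{2s}}ww^-=-\tfrac{\la}{|x|^{2s}}(w^-)^2$, reduce the inequality to
\begin{equation*}
\int_0^T\!\!\int_{\R^{N+1}_+}\!\! y^{1-2s}|\nabla W^-|^2\,dz\,dt + \tfrac12\!\int_{\R^{N+1}_+}\!\! y^{1-2s}(W^-(z,T))^2\,dz \le \kappa_s\la\!\int_0^T\!\!\int_{\R^N}\!\!\frac{(w^-)^2}{|x|^{2s}}\,dx\,dt.
\end{equation*}

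The trace form of the fractional Hardy inequality, namely
\[
\kappa_s\Lambda_{N,s}\!\int_{\R^N}\!\!\frac{u^2}{|x|^{2s}}\,dx \le \int_{\R^{N+1}_+}\!\! y^{1-2s}|\nabla U|^2\,dz,\qquad u=\Tr(U),
\]
is equivalent to Theorem~\ref{DH} via the standard identification of the Gagliardo seminorm with the weighted Dirichlet integral on the extension. Applying it slice-wise in $t$ to $U=W^-$ and using $\la<\Lambda_{N,s}$, the previous inequality becomes
\[
\Bigl(1-\tfrac{\la}{\Lambda_{N,s}}\Bigr)\!\int_0^T\!\!\int_{\R^{N+1}_+}\!\! y^{1-2s}|\nabla W^-|^2\,dz\,dt + \tfrac12\!\int_{\R^{N+1}_+}\!\! y^{1-2s}(W^-(z,T))^2\,dz\le 0,
\]
which forces $W^-\equiv 0$ a.e.\ in $\R^{N+1}_+\times(0,T)$ and hence, by continuity of the trace as in Corollary~\ref{corollary_comparison_extended}, $w^-=\Tr(W^-)=0$ a.e., i.e.\ $w\ge 0$. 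The main technical obstacle is the admissibility of the non-smooth test function $W^-$: as in the proofs of Proposition~\ref{prop_comparison_extended_0} and Corollary~\ref{corollary_comparison_extended}, this requires either a Faedo--Galerkin approximation (to guarantee $W_t\in L^2((0,T);L^2(\R^{N+1}_+,y^{1-2s}))$) or a mollification of the even reflection of $W$ across $\{y=0\}$, so that the Leibniz rules for $\partial_t(W^-)^2$ and $\nabla W\cdot\nabla W^-$ are legal in the weighted energy space.
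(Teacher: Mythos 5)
Your proof is correct, and it addresses a real subtlety that the paper's one-line argument ``Comparing a weak supersolution... with $0$, from Corollary~\ref{corollary_comparison_Hs}'' glosses over. As you observe, a literal application of Corollary~\ref{corollary_comparison_Hs} with the pair $(0,w)$ would require $0\le{}_{(\mathop{\rm Dom}(H^s))^*}\langle H^s(w),\phi\rangle_{\mathop{\rm Dom}(H^s)}$ for every non-negative $\phi$, which does not follow from the supersolution inequality until one already knows $w\ge 0$: the Hardy term $\tfrac{\la}{|x|^{2s}}w\phi$ can be negative on $\{w<0\}$, and $w^p$ is not even unambiguously defined there. Likewise, the mechanism behind Corollary~\ref{corollary_comparison_extended} (showing that $(W-V)^+$ is a positive subsolution in the sense of Proposition~\ref{prop_comparison_extended_0}) does not transfer directly, because the Neumann datum of $W^-$ is not sign-definite on $\{w<0\}$.

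Your argument repairs this cleanly and in the spirit of Section~\ref{section_comparison}: pass to the extension $W$, test with $W^-$ (after the same Faedo--Galerkin/mollification smoothing used in Propositions~\ref{prop_comparison_extended_0}--\ref{corollary_comparison_extended}), exploit that $fw^-\ge0$ and $w^pw^-=0$ (reading $w^p$ as $(w^+)^p$, consistent with the paper's usage), and then absorb the leftover term $\kappa_s\la\int_{\R^N}\tfrac{(w^-)^2}{|x|^{2s}}\,dx$ by the trace Hardy inequality of Proposition~\ref{prop_ineq_hardy} --- which is precisely where the hypothesis $\la<\Lambda_{N,s}$ is used and where the paper's stated proof never invokes it. The resulting inequality $\bigl(1-\tfrac{\la}{\Lambda_{N,s}}\bigr)\int_0^T\!\int_{\R^{N+1}_+}y^{1-2s}|\nabla W^-|^2\,dz\,dt+\tfrac12\int_{\R^{N+1}_+}y^{1-2s}(W^-(z,T))^2\,dz\le 0$ forces $W^-\equiv 0$ and hence $w\ge 0$ by trace continuity, exactly as you say. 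So the conclusion is right, and your route is more careful than what the paper literally writes; the only cosmetic point worth flagging is to state explicitly that $w^p$ is interpreted as $(w^+)^p$ on $\{w<0\}$ (an implicit convention throughout the paper).
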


\subsection{An existence result}\label{subsection_existence}
In this subsection we show how to deduce the existence of  solutions of problem \eqref{prob_frac_heat_hardy} from the existence of  supersolutions  by the means of the comparison principle proved in
Subsection \ref{subsection_comparison}.

Suppose that $f \in L^2(\R^{N+1})$, $f(t,x)=0$ a.e. in $(-\infty,0)\times \R^{N}$.
Let us define for any $n \in \mathbb{N}$ a cut-off function $\eta_n\in C^{\infty}_c(\R^{N+1})$ such that $\eta_n=1$ in $B'_{n} \times (\frac{1}{n+1},n+1)$ and $\eta_n=0$ in
$(\R^{N}\setminus B'_{n+2}) \times \R$ and in $\R^{N} \times  [(-\infty,\frac{1}{n+2})\cup (n+2,+\infty)]$.
Let us consider the  approximating problems
\begin{equation}\label{prob_approximated_0}
\begin{cases}
H^s(w_0)=\eta_0 \frac{f}{1+f}, &\text{ in } \R^{N}\times(0,+\infty),\\
w_0=0, &\text{ in } \R^{N}\times (-\infty,0],
\end{cases}
\end{equation}
and
\begin{equation}\label{prob_approximated_n}
\begin{cases}
H^s(w_{n+1})=\eta_n\left(\la \dfrac{w_n}{1+\frac{1}{n}w_n} \dfrac{1}{(|x|+\frac{1}{n})^{2s}} +  \dfrac{w^p_n}{1+\frac{1}{n} w^p_n} +\dfrac{f}{1+\frac{1}{n}f}\right), &\text{ in } \R^{N}\times(0,+\infty),\\
w_{n+1}=0, &\text{ in } \R^{N}\times (-\infty,0],
\end{cases}
\end{equation}
for any $n \in \mathbb{N}\setminus\{0\}$.
To simplify the notations we define
\begin{align}
&h_0:=\eta_0 f_0,
&h_n:=\eta_n\left(\la \frac{w_n}{1+\frac{1}{n}w_n} \frac{1}{(|x|+\frac{1}{n})^{2s}} +  \frac{w^p_n}{1+\frac{1}{n} w^p_n} +\frac{f}{1+\frac{1}{n}f}\right).
\end{align}

\begin{lemma}\label{lemma_existence_approximated_problems}
For any $n\in\mathbb{N}$, the function
\begin{equation}\label{def_approximated_sol}
w_n:=J_s\left(h_n\right),
\end{equation}
belongs to $\mathop{\rm{Dom}}(H^s)$, and it  is pointwise solutions to the  problem \eqref{prob_approximated_0} if $n=0$ or \eqref{prob_approximated_n} if $n>0$. Furthermore    $w_n$ solves problem \eqref{prob_approximated_0} if $n=0$ or \eqref{prob_approximated_n} if $n>0$ in the sense given by Definition \ref{def_weak_solutions}   and
\begin{equation}\label{ineq_wn}
0\le w_n\le w_{n+1} \text{ a.e. in } \R^{N+1} \quad \text{ for any } n \in \mathbb{N}.
\end{equation}
\end{lemma}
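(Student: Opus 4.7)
The plan is to construct the sequence $\{w_n\}$ inductively, realizing each iterate as the image under $J_s$ of the corresponding truncated source term; with this representation in hand, all four assertions in the statement are read off from the integral formula \eqref{def_Js}, the Fourier identity of Lemma \ref{lemma_Jsg_in DHs}, and the comparison principle of Section \ref{subsection_comparison}.

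I would start by checking that each $h_n$ lies in $L^1(\R^{N+1})\cap L^2(\R^{N+1})$ and vanishes for $t\le 0$. This is purely algebraic: the cut-off $\eta_n$ is compactly supported in $\{t>0\}$, and each factor inside $h_n$ is uniformly bounded by the Moreau-type truncations, namely $\frac{w_n}{1+w_n/n}\le n$, $(|x|+1/n)^{-2s}\le n^{2s}$, $\frac{w_n^p}{1+w_n^p/n}\le n$, and $\frac{f}{1+f/n}\le n$. With this in place, Lemma \ref{lemma_Jsg_in L^2} gives $J_s(h_n)\in L^2(\R^{N+1})$ and Lemma \ref{lemma_Jsg_in DHs} places $w_n=J_s(h_n)$ in $\mathop{\rm{Dom}}(H^s)$ together with the pointwise Fourier identity
\begin{equation*}
\widehat{H^s(w_n)}(\xi,\theta)=(i\theta+|\xi|^2)^s\widehat{w_n}(\xi,\theta)=\widehat{h_n}(\xi,\theta),
\end{equation*}
so that $H^s(w_n)=h_n$ almost everywhere. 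This is the pointwise solution; pairing the same identity against an arbitrary $\phi\in\mc{S}(\R^{N+1})$ recovers the weak formulation of Definition \ref{def_weak_solutions}, while property \eqref{Js_properties_intial_datum} ensures $w_n=0$ on $\R^N\times(-\infty,0]$. Non-negativity $w_n\ge 0$ is immediate from the strict positivity of the kernel in \eqref{def_Js} combined with $h_n\ge 0$ (inductively, since $w_n$ and $f$ are non-negative).

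For the monotonicity $w_n\le w_{n+1}$ I would argue by induction. The key point is that $h_n$ is jointly non-decreasing in the discretization parameter $n$ and in its argument $w$: indeed $\eta_n\nearrow 1$, the truncations $\tau\mapsto\tau/(1+\tau/n)$ are non-decreasing in $n$ with derivative $(1+\tau/n)^{-2}>0$ in $\tau$, and $(|x|+1/n)^{-2s}$ is non-decreasing in $n$; analogous monotonicities hold for the $w^p$ and $f$ terms. Combining these with the inductive hypothesis $w_{n-1}\le w_n$ yields $h_{n-1}\le h_n$ pointwise, and applying either the positivity of the kernel of $J_s$ directly, or equivalently Corollary \ref{corollary_comparison_Hs}, one concludes $w_n\le w_{n+1}$. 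The base case $w_0\le w_1$ is analogous, since all additional non-negative terms appearing in $h_1$ but not in $h_0$ only increase the source and $\eta_1\ge\eta_0$.

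The main obstacle is the joint monotonicity of the truncated data used in the final step; once that term-by-term verification is in place every other point is a direct application of the tools already developed in Sections \ref{sec_functional_settings_and_preliminaries} and \ref{section_comparison}.
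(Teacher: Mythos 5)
Your argument follows essentially the same route as the paper's own proof: Lemmas \ref{lemma_Jsg_in L^2} and \ref{lemma_Jsg_in DHs} yield $w_n \in \mathop{\rm{Dom}}(H^s)$ together with the pointwise identity $H^s(w_n)=h_n$, the Plancherel identity upgrades this to the weak formulation of Definition \ref{def_weak_solutions}, property \eqref{Js_properties_intial_datum} gives the vanishing on $\R^N\times(-\infty,0]$, and the ordering \eqref{ineq_wn} follows from the positivity of the kernel in \eqref{def_Js} (equivalently from Corollary \ref{corollary_comparison_Hs}). The extra care you take in checking $h_n\in L^1(\R^{N+1})\cap L^2(\R^{N+1})$ and in spelling out the term-by-term monotonicity in $n$ that drives the induction simply fills in steps the paper leaves implicit, so the argument is complete and correct.
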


\begin{proof}
By Lemma \ref{lemma_Jsg_in L^2} and  by  Lemma \ref{lemma_Jsg_in
DHs}, it follows that $w_n \in \mathop{\rm{Dom}}(H^s)$ for any $n
\in \mathbb{N}$ and that $H^s(w_n)=h_n$ pointwise. Then for any
$\phi \in \mc{S}(\R^{N+1})$, by the Plancherel identity,
\begin{equation}
\int_{\R^{N+1}}(i\theta+|\xi|^2)^s \widehat{w}_n \overline{\widehat{\phi}} \, d\xi \,  d \theta = \int_{\R^{N+1}}H^s(w_n)\phi \, d\xi \, d \theta =\int_{0}^\infty\int_{\R^{N}} h_n \phi \, dx \, dt.
\end{equation}
Thanks to  the H\"older inequality, $\phi \mapsto \int_{\R^{N+1}}
h_n \phi \, dx \, dt$ belongs to  $(\mathop{\rm{Dom}}(H^s))^*$
while $w_n=0$ in $\R^{N} \times (-\infty,0]$ in view of
\eqref{Js_properties_intial_datum}.

In conclusion, $w_n$ is a solution of problem
\eqref{prob_approximated_0} if $n=0$ or
\eqref{prob_approximated_n} if $n>0$, in the sense given by
Definition \ref{def_weak_solutions}. Finally \eqref{ineq_wn}
follows from Corollary \ref{corollary_comparison_Hs} or directly
by \eqref{def_Js} and \eqref{def_approximated_sol}.
\end{proof}

In the next proposition we prove the existence of a weak solution of problem \eqref{prob_frac_heat_hardy} starting from a weak supersolution of the same problem.
\begin{proposition}\label{prop_existence_weak_solution}
Suppose that $f \in L^2(\R^{N+1})$, $f(t,x)=0$ a.e. in $(-\infty,0)\times \R^{N}$ and that  $u \in \mathop{\rm{Dom}}(H^s)$  is a weak supersolution of problem \eqref{prob_frac_heat_hardy} in the sense
given by Definition \ref{def_weak_solutions}. Let ${w_n}$ be as  in Lemma \ref{lemma_existence_approximated_problems}. Then  there exists a function $w \in \mathop{\rm{Dom}}(H^s)$ such that  $w_n(x,t)
\to w(x,t)$ as $n \to \infty$ a.e. in  $\R^{N+1}$ and $w$ weakly solves problem \eqref{prob_frac_heat_hardy}.
\end{proposition}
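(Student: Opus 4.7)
The plan is to follow a monotone iteration scheme: show that each $w_n$ is uniformly dominated by the given supersolution $u$, then use monotonicity and weak compactness in $\mathop{\rm{Dom}}(H^s)$ to extract a limit $w$ solving the problem. The first step is to prove by induction that $0 \le w_n \le u$ a.e.\ for every $n \in \mathbb{N}$. Since $u \ge 0$ a.e.\ by Corollary \ref{corollary_positivity}, one has $h_0 = \eta_0 f/(1+f) \le f \le \la u/|x|^{2s} + u^p + f$, so $u$ is a weak supersolution of the linear equation satisfied by $w_0$, and Corollary \ref{corollary_comparison_Hs} gives $w_0 \le u$. Inductively, if $w_n \le u$ then the truncations built into $h_n$ yield $h_n \le \la u/|x|^{2s} + u^p + f$ a.e., and a second application of Corollary \ref{corollary_comparison_Hs} gives $w_{n+1} \le u$. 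Combined with \eqref{ineq_wn}, the sequence $\{w_n\}$ is monotone increasing and bounded above by $u \in L^2(\R^{N+1})$, so it converges a.e.\ and in $L^2(\R^{N+1})$ to a function $w$ with $0 \le w \le u$.

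Next I would obtain a uniform estimate in $\mathop{\rm{Dom}}(H^s)$. Because $h_{n-1} \in L^2(\R^{N+1})$ and $\mathop{\rm{Dom}}(H^s) \hookrightarrow L^2(\R^{N+1})$, the functional $\phi \mapsto \int h_{n-1}\phi$ lies in $(\mathop{\rm{Dom}}(H^s))^*$, and by density of $\mc{S}(\R^{N+1})$ in $\mathop{\rm{Dom}}(H^s)$ the weak formulation extends so that one may test against $w_n$ itself. Using \eqref{def_operator_Hs} and taking real parts, together with the elementary bound $\mathrm{Re}\,(i\theta+|\xi|^2)^s \ge \cos(s\pi/2)|i\theta+|\xi|^2|^s$ that holds because $\arg(i\theta+|\xi|^2)\in(-\pi/2,\pi/2)$, one obtains
\[
\cos\!\bigg(\frac{s\pi}{2}\bigg)\!\int_{\R^{N+1}}|i\theta+|\xi|^2|^s |\widehat{w}_n|^2\,d\xi\, d\theta \le \int_{\R^{N+1}} h_{n-1}\, w_n\, dx\, dt.
\]
Since $h_{n-1}\le \la u/|x|^{2s}+u^p+f$ and $w_n\le u$, the right-hand side is dominated by the value of the supersolution functional at $\phi=u$, which is finite by hypothesis of Definition \ref{def_weak_solutions}. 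Together with $\|w_n\|_{L^2}\le\|u\|_{L^2}$, this produces a uniform bound $\|w_n\|_{\mathop{\rm{Dom}}(H^s)}\le C$.

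To conclude, by reflexivity a subsequence $w_{n_k}\rightharpoonup \tilde w$ weakly in $\mathop{\rm{Dom}}(H^s)$; the a.e.\ identification from the first step forces $\tilde w = w$, so $w\in\mathop{\rm{Dom}}(H^s)$. For any fixed $\phi\in\mc{S}(\R^{N+1})$, weak convergence passes $\langle H^s(w_n),\phi\rangle \to \langle H^s(w),\phi\rangle$, while dominated convergence with integrable dominant $(\la u/|x|^{2s}+u^p+f)|\phi|$ (the Hardy term being integrable by \eqref{ineq_Hardy_frac_Herbst}, the embedding \eqref{domHs_inclusion_L2Hs}, and the Schwartz decay of $\phi$) gives $\int h_{n-1}\phi \to \int(\la w\phi/|x|^{2s}+w^p\phi+f\phi)$. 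The identity $w=0$ on $\R^N\times(-\infty,0]$ is inherited from each $w_n$. The delicate point in the scheme is the uniform $\mathop{\rm{Dom}}(H^s)$-bound: testing with $w_n$ produces the complex quadratic form $\int(i\theta+|\xi|^2)^s|\widehat{w}_n|^2$, whose modulus alone does not control the seminorm, and the coercivity inequality on its real part is what rescues the argument.
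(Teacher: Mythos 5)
Your proposal is correct and follows the same monotone--iteration skeleton as the paper: show $0\le w_n\le u$ by induction and comparison, extract an $L^2$ limit $w$, obtain a uniform $\mathop{\rm{Dom}}(H^s)$-bound, pass to the weak limit, and identify $w$ as a weak solution. The genuine difference is in how the uniform $\mathop{\rm{Dom}}(H^s)$-bound is established. The paper exploits the explicit inversion structure: since $w_n=J_s(h_{n-1})$, the composition property $J_s=J_{s/2}J_{s/2}$ gives $H^{s/2}(w_n)=J_{s/2}(h_{n-1})$, and Lemma \ref{lemma_Hs2_Js2} provides the pointwise inequality $0\le J_{s/2}(h_{n-1})\le H^{s/2}(u)$, so by Plancherel
\begin{equation}
\int_{\R^{N+1}}\big|(i\theta+|\xi|^2)^{s/2}\widehat{w}_n\big|^2\,d\xi\,d\theta
=\int_{\R^{N+1}}\big|J_{s/2}(h_{n-1})\big|^2\,d\xi\,d\theta
\le\int_{\R^{N+1}}\big|H^{s/2}(u)\big|^2\,d\xi\,d\theta<\infty,
\end{equation}
with no coercivity needed. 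You instead test the weak formulation with $w_n$ and invoke $\mathrm{Re}\,(i\theta+|\xi|^2)^s\ge\cos(s\pi/2)\,|i\theta+|\xi|^2|^s$, valid because $\arg(i\theta+|\xi|^2)\in(-\pi/2,\pi/2]$ and $\cos(s\pi/2)>0$ for $s\in(0,1)$. This is a sound energy-method alternative, and it correctly identifies the pitfall that the modulus of the complex quadratic form does not directly control the seminorm; note also that the quadratic form $\int(i\theta+|\xi|^2)^s|\widehat{w}_n|^2$ is automatically real (it equals $\int h_{n-1}w_n$, or directly by the reality of $w_n$), so taking real parts loses nothing. What the paper's route buys is that it avoids both the coercivity constant and the density of $\mc{S}(\R^{N+1})$ in $\mathop{\rm{Dom}}(H^s)$ that your testing step implicitly uses; what your route buys is independence from the specific $J_s$-inversion machinery and Lemma \ref{lemma_Hs2_Js2}, making it more portable to operators without an explicit integral inverse.
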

\begin{proof}
In view of Corollary \ref{corollary_comparison_Hs} and \eqref{ineq_wn} we conclude that $w:=\lim_{n \to \infty} w_n$ is well-defined, that $ w \le u$ a.e. in $\R^{N+1}$
and in particular that  $w \in L^2(\R^{N+1})$.
Let us prove that $\{w_n\}_{n \in \mathbb{N}}$ is bounded in $\mathop{\rm{Dom}}(H^s)$. To this end notice that $u$ is a weak supersolution of the problem
\begin{equation}
\begin{cases}
H^s(u) \ge h_n, &\text{ in } \R^{N+1} \times (0,+\infty),\\
u=0 &\text{ in }\R^{N+1} \times (-\infty,0),
\end{cases}
\end{equation}
since $w_n \le w \le u$ a.e. in $\R^{N+1}$. Then by the Plancherel identity and Lemma  \ref{lemma_Hs2_Js2}
\begin{multline}
\int_{\R^{N+1}}|(i\theta+|\xi|^2)^\frac{s}{2} \widehat{w}_n|^2 \, d\xi \,  d \theta=
\int_{\R^{N+1}}|\mathcal{F}^{-1}((i\theta+|\xi|^2)^\frac{s}{2} \widehat{w}_n)|^2 \, d\xi \,  d\theta=\int_{\R^{N+1}}|J_{\frac{s}{2}}(h_n)|^2 \, d\xi \,  d \theta\\
\le \int_{\R^{N+1}}|H^{\frac{s}{2}}(u)|^2 \, d\xi \,  d \theta=\int_{\R^{N+1}}|(i\theta+|\xi|^2)^\frac{s}{2} \widehat{u}|^2 \, d\xi \,  d \theta<+\infty.
\end{multline}
We conclude that, up to a subsequence, $w_n \to  w$ weakly in $\mathop{\rm{Dom}}(H^s)$ as $n \to \infty$. Thus, thanks to the Dominated Convergence Theorem,
\begin{multline}
\int_{\R^{N+1}}(i\theta+|\xi|^2)^s \widehat{w} \overline{\widehat{\phi}} \, d\xi \,  d \theta=
\lim_{n \to \infty}\int_{\R^{N+1}}(i\theta+|\xi|^2)^s \widehat{w}_n \overline{\widehat{\phi}} \, d\xi \,  d \theta\\
=\lim_{n \to \infty}\int_{0}^\infty\int_{\R^{N}} h_n \phi \, dx \, dt= \int_{0}^\infty\int_{\R^{N}} \left( \frac{\la}{|x|^{2s}}w+w^p+f\right) \phi \, dx \, dt,
\end{multline}
for any $\phi \in \mc{S}(\R^{N+1})$.

It is also clear that $\phi \mapsto
\int_{\R^{N+1}}(\frac{\la}{|x|^{2s}}w+w^p+f) \phi \, dx \, dt$
belongs to  $(\mathop{\rm{Dom}}(H^s))^*$ since  $w \le u$ a.e. in
$\R^{N+1}$. In conclusion  $w$ is a weak solution of  problem
\eqref{prob_frac_heat_hardy} in the sense given by Definition
\ref{def_weak_solutions}.
\end{proof}

We can also prove the existence of a very weak solution of problem \eqref{prob_frac_heat_hardy} starting from a very weak supersolution of the same problem.
\begin{proposition}\label{prop_existence_very_weak_solution}
Let $q \in [1,2]$,  $f\in L^q(\R^N \times (-\infty, T))$ for any $T \in \R$ and suppose that $f(x,t)=0$ a.e. in $(-\infty,0)\times \R^{N}$. Let $u $ be  a weak supersolution of problem
\eqref{prob_frac_heat_hardy} in the sense given by Definition \ref{def_very_weak_solutions}. Let ${w_n}$ be as  in Lemma \ref{lemma_existence_approximated_problems}. Then  there exists a function $w
\in L^q_{loc} (\R^{N+1})$ such that  $w_n(x,t)      \to w(x,t)$ as $n \to \infty$ a.e. in  $\R^{N+1}$ and $w$ is a very weak solution of problem \eqref{prob_frac_heat_hardy}.
\end{proposition}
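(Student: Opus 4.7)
The plan is to adapt the monotone-iteration scheme of Proposition \ref{prop_existence_weak_solution} to the very weak setting, using the very weak supersolution $u$ as a pointwise barrier in place of the $\mathop{\rm{Dom}}(H^s)$-comparison in Corollary \ref{corollary_comparison_Hs}, which is unavailable here. Since $J_s$ is given by convolution against the strictly positive kernel in \eqref{def_Js}, it is monotone in its argument, and this monotonicity is the central structural fact I will exploit throughout.

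The first key step is an induction showing $0 \le w_n \le u$ a.e.\ for every $n \in \mathbb{N}$. For $n=0$, the very weak supersolution inequality $u \ge J_s(\la u/|x|^{2s} + u^p + f)$, combined with the pointwise bound $\eta_0 f/(1+f) \le f \le \la u/|x|^{2s} + u^p + f$ and the monotonicity of $J_s$, gives $w_0 \le u$. For the inductive step, the truncations built into $h_n$ yield
\begin{equation*}
h_n \le \frac{\la w_n}{|x|^{2s}} + w_n^p + f \le \frac{\la u}{|x|^{2s}} + u^p + f,
\end{equation*}
whence $w_{n+1} = J_s(h_n) \le J_s(\la u/|x|^{2s} + u^p + f) \le u$. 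Combined with the monotonicity of $\{w_n\}$ from Lemma \ref{lemma_existence_approximated_problems}, the pointwise limit $w := \lim_{n\to\infty} w_n$ exists a.e., satisfies $0 \le w \le u$, and therefore lies in $L^q_{loc}(\R^{N+1})$.

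The second step is to pass to the limit in the identity $w_{n+1}(x,t) = J_s(h_n)(x,t)$. Since $\eta_n \to 1$ on $\R^N \times (0,\infty)$, since the truncations $w_n/(1+w_n/n)$, $w_n^p/(1+w_n^p/n)$ and $f/(1+f/n)$ converge to $w$, $w^p$ and $f$ respectively, and since $(|x|+1/n)^{-2s} \to |x|^{-2s}$, one obtains
\begin{equation*}
h_n \longrightarrow g := \frac{\la w}{|x|^{2s}} + w^p + f \qquad \text{a.e.\ in } \R^{N+1}.
\end{equation*}
Using the uniform domination $h_n \le \la u/|x|^{2s} + u^p + f$ and the fact that this dominating function has $J_s$-image bounded a.e.\ by $u$, hence finite a.e., the dominated convergence theorem applied inside the explicit integral \eqref{def_Js} yields $J_s(h_n)(x,t) \to J_s(g)(x,t)$ for a.e.\ $(x,t)$. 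Hence $w = J_s(g)$ a.e., which is precisely the very weak solution property in the sense of Definition \ref{def_very_weak_solutions}.

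The main obstacle I expect is the first step: in Proposition \ref{prop_existence_weak_solution} the bound $w_n \le u$ was obtained through the energy-type comparison of Corollary \ref{corollary_comparison_Hs}, which requires both the supersolution and the approximants to lie in $\mathop{\rm{Dom}}(H^s)$. In the present very weak framework $u$ need not possess any such regularity, so the comparison must instead be carried out directly at the level of the integral representation of $J_s$, relying solely on the positivity of its kernel. Once this barrier argument is secured, the remainder is a routine monotone/dominated convergence passage.
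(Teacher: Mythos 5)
Your proof is correct and follows essentially the same route as the paper: both establish the barrier $0\le w_n\le u$ by exploiting the monotonicity of the convolution operator $J_s$ (pointwise positivity of the kernel in \eqref{def_Js}) together with the very weak supersolution inequality $u\ge J_s(\la u/|x|^{2s}+u^p+f)$, then pass to the limit by dominated convergence to obtain $w=J_s(\la w/|x|^{2s}+w^p+f)$. The paper states the barrier step tersely as an immediate consequence of \eqref{def_Js} and \eqref{def_approximated_sol}, whereas you spell out the induction and the pointwise estimates $h_n\le \la u/|x|^{2s}+u^p+f$ explicitly, which is a welcome clarification rather than a different argument.
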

\begin{proof}
By \eqref{def_Js} and \eqref{def_approximated_sol}, it is clear
that $w_n \le u$ a.e. in $\R^{N+1}$ for any $n \in \mathbb{N}$. Hence $w:=\lim_{n \to \infty} u_n$ is well-defined and  $ w \le u$
a.e. in $\R^{N+1}$, in particular $w \in L^q_{loc}(\R^{N+1})$.
Furthermore by the Dominated Converge Theorem
\begin{equation}
w =\lim_{n \to \infty} w_n =\lim_{n \to \infty} J_s(h_n) = J_s\left(\frac{\la}{|x|^{2s}}w+w^p+f\right),
\end{equation}
that is, $w$ is a very weak solution of \eqref{prob_frac_heat_hardy}.
\end{proof}

\section{Local behaviour of weak solutions}\label{sec_local_weak_sol}
In this section we will study the local behaviour near the singular point $0$ of non-negative weak supersolutions of the equation
\begin{equation}\label{eq_no_f_nor_datum}
H^s(w)=\la\frac{w}{|x|^{2s}} \quad \text{ in } \R^N\times (0,\infty).
\end{equation}
The initial datum plays no role in the results proved in this section, hence we consider the following definition of weak solutions.

\begin{definition}\label{def_weak_solutions_no_initail_datum}
We say that $w$ is   weak  supersolution (subsolution) of \eqref{eq_no_f_nor_datum} if $w \in \mathop{\rm{Dom}}(H^s)$,
\begin{equation}\label{eq_super_no_f_nor_datum_weak}
\sideset{_{(\mathop{\rm{Dom}}(H^s))^*}}{_{\mathop{\rm{Dom}}(H^s)}}{\mathop{\left\langle H^s(w), \phi\right\rangle}}
\ge(\le)\int_{0}^{\infty}\left(\int_{\R^N}  \frac{\la}{|x|^{2s}}w\phi\, dx \right) dt, \\
\end{equation}
for any non-negative $\phi \in \mc{S}(\R^{N+1})$ such that  $\phi=0$  in  $\R^N\times (-\infty,0]$.
If $w$ is a supersolution and a subsolution of  the equation in \eqref{eq_super_no_f_nor_datum_weak} we say that $w$ is a solution  of  \eqref{eq_super_no_f_nor_datum_weak}.
\end{definition}

We start by fixing some notations and recalling some useful preliminary results.
\begin{proposition}{\cite[Section 2]{FF}}\label{prop_ineq_hardy}
For any $\phi \in C^\infty_c(\overline{\R^{N+1}_+})$
\begin{equation}\label{ineq_Hardy_frac}
k_s \Lambda_{N,s} \int_{\R^N}\frac{|\phi(x,0)|^2}{|x|^{2s}} \, dx \le \int_{\R^{N+1}_+} y^{1-2s}|\nabla \phi|^2\, dz,
\end{equation}
with $\Lambda_{N,s}$ as in \eqref{def_La} and    $k_s$ as in Theorem \ref{theorem_extension}.
\end{proposition}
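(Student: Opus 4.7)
The plan is to combine two ingredients: the intrinsic fractional Hardy--Leray inequality on $\R^N$ stated in Theorem \ref{DH}, and a trace-type inequality relating the Gagliardo seminorm of $\phi(\cdot,0)$ to the weighted Dirichlet integral of $\phi$ on the half space $\R^{N+1}_+$. Schematically, the scheme is
\[
\int_{\R^N}\frac{|\phi(x,0)|^2}{|x|^{2s}}\,dx
\;\underset{\eqref{ineq_Hardy_frac_Herbst}}{\lesssim}\;
[\phi(\cdot,0)]_{H^s(\R^N)}^2
\;\underset{\text{trace}}{\lesssim}\;
\int_{\R^{N+1}_+} y^{1-2s}|\nabla \phi|^2\,dz,
\]
with the two constants multiplying to exactly $k_s\Lambda_{N,s}$.

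First I would set $u(x):=\phi(x,0)$, which is well defined and belongs to $C^\infty_c(\R^N)$ because $\phi\in C^\infty_c(\overline{\R^{N+1}_+})$, so Theorem \ref{DH} applies directly to $u$ and gives
\[
\Lambda_{N,s}\int_{\R^N}\frac{|u(x)|^2}{|x|^{2s}}\,dx
\le \int_{\R^N}\int_{\R^N}\frac{(u(x)-u(y))^2}{|x-y|^{N+2s}}\,dx\,dy.
\]

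The second step is the trace inequality coming from the Caffarelli--Silvestre extension for the operator $(-\Delta)^s$. Among all extensions $V\in H^1(\R^{N+1}_+,y^{1-2s})$ with trace $\Tr(V)=u$, the weighted Dirichlet integral $\int_{\R^{N+1}_+} y^{1-2s}|\nabla V|^2\,dz$ is minimised by the Poisson extension $V^\ast$, and the minimum value equals $k_s[u]_{H^s(\R^N)}^2$ (up to the same normalising constant $k_s$ appearing in Theorem \ref{theorem_extension}, since this is the purely elliptic specialisation of that formula). Therefore, for the particular extension $\phi$ of $u$,
\[
k_s\int_{\R^N}\int_{\R^N}\frac{(u(x)-u(y))^2}{|x-y|^{N+2s}}\,dx\,dy
= \int_{\R^{N+1}_+} y^{1-2s}|\nabla V^\ast|^2\,dz
\le \int_{\R^{N+1}_+} y^{1-2s}|\nabla \phi|^2\,dz.
\]
Multiplying the Hardy--Leray inequality by $k_s$ and combining yields exactly \eqref{ineq_Hardy_frac}.

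The main obstacle, and the step to be careful about, is matching the normalising constant: one has to verify that the constant appearing in the identity ``weighted Dirichlet integral of the Poisson extension $=$ constant times Gagliardo seminorm'' is precisely the constant $k_s=\frac{\Gamma(1-s)}{2^{2s-1}\Gamma(s)}$ used in Theorem \ref{theorem_extension}. This is a standard computation via the Fourier transform of the Poisson kernel $P_s(x,y)$ for the extension problem, and it produces exactly $k_s$ so that the definition of $H^s$ through Fourier symbol and through the Dirichlet-to-Neumann map of the extension agree; the cited reference \cite{FF} carries out this identification in the elliptic setting.
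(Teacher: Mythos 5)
The paper does not prove this proposition: it is cited directly from \cite[Section 2]{FF}, so there is no internal proof to compare against. Your scheme --- composing the fractional Hardy inequality on $\R^N$ with the energy-minimising property of the Caffarelli--Silvestre extension --- is indeed the route taken in the cited reference and is the right idea.

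There is, however, a concrete gap in the constant bookkeeping that you flag as ``the main obstacle'' but then resolve incorrectly. You claim that, for the Poisson extension $V^\ast$ of $u=\phi(\cdot,0)$,
\begin{equation}
\int_{\R^{N+1}_+}y^{1-2s}|\nabla V^\ast|^2\,dz=\kappa_s\int_{\R^N}\!\int_{\R^N}\frac{(u(x)-u(y))^2}{|x-y|^{N+2s}}\,dx\,dy.
\end{equation}
This is false with the normalisations of the paper. The Dirichlet-to-Neumann identity $-\lim_{y\to 0^+}y^{1-2s}\partial_y V^\ast=\kappa_s(-\Delta)^s u$, integrated by parts against $u$, gives
\begin{equation}
\int_{\R^{N+1}_+}y^{1-2s}|\nabla V^\ast|^2\,dz=\kappa_s\int_{\R^N}u\,(-\Delta)^s u\,dx=\kappa_s\,\frac{a_{N,s}}{2}\int_{\R^N}\!\int_{\R^N}\frac{(u(x)-u(y))^2}{|x-y|^{N+2s}}\,dx\,dy,
\end{equation}
where $a_{N,s}$ is the constant in \eqref{fraccionario}. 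So the factor pairing $\kappa_s$ with the weighted Dirichlet energy is $\|(-\Delta)^{s/2}u\|_{L^2(\R^N)}^2$, not the unnormalised Gagliardo seminorm. Similarly, the Herbst constant $\Lambda_{N,s}=2^{2s}\Gamma^2(\tfrac{N+2s}{4})/\Gamma^2(\tfrac{N-2s}{4})$ is the sharp constant for the quadratic form $\|(-\Delta)^{s/2}u\|_{L^2}^2=\frac{a_{N,s}}{2}[u]_{H^s}^2$, not for $[u]_{H^s}^2$ as \eqref{ineq_Hardy_frac_Herbst} is literally typeset; if one takes both statements at face value the implied trace inequality $\kappa_s[u]_{H^s}^2\le\int y^{1-2s}|\nabla\phi|^2$ actually fails (already for $\phi=V^\ast$) whenever $a_{N,s}<2$. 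To make the argument rigorous, route both ingredients through the Fourier-side quantity $\|(-\Delta)^{s/2}u\|_{L^2}^2$: Herbst's inequality reads $\Lambda_{N,s}\int_{\R^N}|u|^2|x|^{-2s}\,dx\le\|(-\Delta)^{s/2}u\|_{L^2}^2$, the extension gives $\kappa_s\|(-\Delta)^{s/2}u\|_{L^2}^2=\int_{\R^{N+1}_+}y^{1-2s}|\nabla V^\ast|^2\,dz\le\int_{\R^{N+1}_+}y^{1-2s}|\nabla\phi|^2\,dz$, and the product yields \eqref{ineq_Hardy_frac} with the constant $\kappa_s\Lambda_{N,s}$ exactly as claimed, with no hidden cancellation of misidentified factors.
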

For $r>0$ let us define
\begin{equation}\label{def_Br+_Br_Br'}
    B_r:=\{z \in \R^{N+1}:|z|<r\}, \quad  B_r^+:=\{z \in \R_+^{N+1}:|z|<r\}, \quad  \text{ and } \quad B_r':=\{x \in \R^{N}:|x|<r\}.
\end{equation}

\begin{proposition}\cite[Lemma 4.1]{Fpower} \label{prop_Phi}
For any $\la \in (0,\Lambda_{N,s})$ there exists  a positive, continuous function $\Phi_\la:\R^{N+1}_+\setminus\{0\} \to \R$ such that  $\Phi_\la \in H^1(B_r^+,y^{1-2s})$ for any $r>0$ and $\Phi_\la$ weakly solves the problem
\begin{equation}\label{prob_Phi}
\begin{cases}
\dive(y^{1-2s} \nabla \Phi_\la)=0, &\text{ in } \R^{N+1}_+,\\
\Phi_\la(x,0)=|x|^{-\mu(\la)}, &\text{ on } \R^N, \\
-\lim_{y \to 0^+}y^{1-2s}\pd{\Phi_\la}{y}= \kappa_s \frac{\la}{|x|^{2s}} |x|^{-\mu(\la)}, &\text{ on } \R^N,
\end{cases}
\end{equation}
in the sense that for any $\varphi \in C_c^{\infty}(\overline{\R^{N+1}_+})$
\begin{equation}\label{eq_Phi}
\int_{\R^{N+1}_+}y^{1-2s} \nabla \Phi_\la \nabla \varphi\, dz =\kappa_s \int_{\R^N}\frac{\la}{|x|^{2s}} |x|^{-\mu(\la)} \varphi\, dz.
\end{equation}
Furthermore
\begin{align}
&\Phi_\la(\tau z)=\tau^{-\mu(\la)}\Phi_\la(z), \quad \text{ for any } \tau \in [0,\infty)  \text{ and any } z \in \R^{N+1}_+\setminus\{0\},\label{Phi_omogeneity}   \\
&C_1 |z|^{-\mu(\la)}\le \Phi_\la(z)\le  C_2 |z|^{-\mu(\la)}, \quad \text{ for any } \tau \in (0,\infty) \text{ and any }  z \in \R^{N+1}_+\setminus\{0\},\label{ineq_Phi_|z|}\\
&\nabla \Phi_\la(z)\cdot z = -\mu(\la) \Phi_\la(z), \quad \text{ for any }  z \in\R^{N+1}_+\setminus\{0\}, \label{eq_Phi_nabla}
\end{align}
for some positive constants $C_1,C_2>0$.
\end{proposition}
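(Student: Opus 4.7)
\medskip
\noindent
\textbf{Proof proposal.} My plan is to construct $\Phi_\la$ as the Caffarelli--Silvestre extension of $u(x):=|x|^{-\mu(\la)}$, and then to derive the required properties from the explicit representation together with the scaling of $u$. Concretely, I would set
\begin{equation*}
\Phi_\la(x,y):=c_{N,s}\,y^{2s}\int_{\R^N}\frac{|\xi|^{-\mu(\la)}}{(|x-\xi|^2+y^2)^{(N+2s)/2}}\,d\xi,
\end{equation*}
with $c_{N,s}$ the standard normalizing constant. Since $0<\mu(\la)<\frac{N-2s}{2}<N$, the integral converges absolutely at infinity and is integrable near $\xi=0$, so $\Phi_\la$ is finite, positive, and smooth on $\R^{N+1}_+\setminus\{0\}$; elementary majorants near the singular set $\{(0,0)\}$ and away from it give the local $H^1(B_r^+,y^{1-2s})$ estimates. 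The equation $\dive(y^{1-2s}\nabla\Phi_\la)=0$ in $\R^{N+1}_+$ is a direct consequence of the fact that the Poisson kernel $P_y^s(x)=c_{N,s}\,y^{2s}/(|x|^2+y^2)^{(N+2s)/2}$ is a fundamental solution of $\dive(y^{1-2s}\nabla\cdot)=0$, and $\Phi_\la(x,0)=|x|^{-\mu(\la)}$ follows from the standard approximation-of-the-identity property of $P_y^s$.

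To obtain the Neumann condition in \eqref{prob_Phi} I would use the classical identification
\begin{equation*}
-\lim_{y\to 0^+}y^{1-2s}\pd{\Phi_\la}{y}=\kappa_s\,(-\Delta)^s\big(|x|^{-\mu(\la)}\big),
\end{equation*}
together with the explicit computation of $(-\Delta)^s|x|^{-\mu(\la)}$. Indeed, for $0<\beta<N-2s$ one has $(-\Delta)^s|x|^{-\beta}=2^{2s}\frac{\Gamma((\beta+2s)/2)\Gamma((N-\beta)/2)}{\Gamma(\beta/2)\Gamma((N-\beta-2s)/2)}|x|^{-\beta-2s}$; taking $\beta=\mu(\la)$ and rewriting the Gamma quotient in terms of $\Upsilon$ evaluated at $\alpha_\la=\tfrac{N-2s}{2}-\mu(\la)$, the multiplicative constant reduces exactly to $\la$ by Proposition \ref{prop_mu} and the definition \eqref{def_mu}. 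This produces the right-hand side $\kappa_s\la|x|^{-2s-\mu(\la)}$, and the weak formulation \eqref{eq_Phi} then follows from a Green-type identity for $\dive(y^{1-2s}\nabla\cdot)$ applied on $\{y>\e\}$ and passing to the limit $\e\to 0^+$.

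For the remaining three properties I would exploit the scaling $u(\tau x)=\tau^{-\mu(\la)}u(x)$. A change of variables $\xi\mapsto\tau\xi$ in the defining integral yields the homogeneity \eqref{Phi_omogeneity}. Since $\Phi_\la$ is continuous and strictly positive on the compact set $\{|z|=1,\ y\ge 0\}\setminus(\text{neighbourhood of trace singularity}\ \{x=0,y=0\})$ and homogeneous of degree $-\mu(\la)$, the sharp two-sided bound \eqref{ineq_Phi_|z|} follows at once; I would only need to check continuity at boundary points of the half-ball with $y=0$ and $x\ne 0$, which is immediate from $\Phi_\la(x,0)=|x|^{-\mu(\la)}$ and interior regularity of the degenerate equation. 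Finally, differentiating the homogeneity relation $\Phi_\la(\tau z)=\tau^{-\mu(\la)}\Phi_\la(z)$ in $\tau$ at $\tau=1$ gives Euler's identity \eqref{eq_Phi_nabla}.

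The main technical obstacle I anticipate is the rigorous justification of the Neumann condition as an identity \emph{in the weak sense} rather than pointwise, because $\Phi_\la$ is singular at $0\in\R^{N+1}$ and the trace datum $|x|^{-\mu(\la)}$ itself is not in $L^\infty_{\text{loc}}$; handling this requires carefully testing \eqref{eq_Phi} against $\varphi\in C_c^\infty(\overline{\R^{N+1}_+})$, truncating near the origin, using the bounds \eqref{ineq_Phi_|z|} together with $\mu(\la)<\frac{N-2s}{2}$ to absorb the weight $y^{1-2s}$, and then sending the truncation parameter to zero by dominated convergence.
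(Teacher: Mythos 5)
The paper does not actually prove this proposition: it is taken verbatim from \cite[Lemma 4.1]{Fpower}, so there is no in-paper argument to compare against. Your construction via the $s$-Poisson extension of $|x|^{-\mu(\la)}$ is a correct and standard way to obtain $\Phi_\la$, and I believe it is essentially the construction used in the cited reference. The most delicate step — checking that the multiplicative constant in $(-\Delta)^s|x|^{-\mu(\la)}=C(\mu(\la))|x|^{-\mu(\la)-2s}$ equals $\la$ — works out exactly: substituting $\beta=\mu(\la)=\tfrac{N-2s}{2}-\alpha_\la$ into
\begin{equation*}
C(\beta)=2^{2s}\,\frac{\Gamma\bigl(\tfrac{\beta+2s}{2}\bigr)\Gamma\bigl(\tfrac{N-\beta}{2}\bigr)}{\Gamma\bigl(\tfrac{\beta}{2}\bigr)\Gamma\bigl(\tfrac{N-\beta-2s}{2}\bigr)}
\end{equation*}
reproduces $\Upsilon(\alpha_\la)$ from \eqref{def_F}, which equals $\la$ by \eqref{def_mu}. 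The local $H^1(B_r^+,y^{1-2s})$ integrability indeed follows from $\mu(\la)<\tfrac{N-2s}{2}$ together with the homogeneity $\Phi_\la\sim|z|^{-\mu(\la)}$, $|\nabla\Phi_\la|\sim|z|^{-\mu(\la)-1}$: in polar coordinates the relevant powers of $r$ are $N-2\mu(\la)>2s$ and $N-2s-2\mu(\la)>0$ respectively. Your remaining points (change of variables for homogeneity, compactness for the two-sided bound, differentiating the scaling relation to get Euler's identity) are all fine, and you correctly flag that the only remaining technical care is passing from the pointwise Neumann derivative to the weak formulation \eqref{eq_Phi} by truncating near the origin and using the two-sided bound to invoke dominated convergence.
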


We recall the definition of the class $\mathcal{A}_2$ of Muckenhoupt weights.
\begin{definition} We say that  a measurable function $\rho:\R^{N+1} \to [0,+\infty)$ belongs to  $\mathcal{A}_2$ if
\begin{equation}\label{def_A2}
\sup_{r>0}\left\{r^{-2N-2}\left(\int_{B_r} \rho \, dz\right)\left(\int_{B_r} \rho^{-1} \, dz\right)\right\} < +\infty.
\end{equation}
\end{definition}

Let
\begin{equation}\label{def_tilde_Phi}
\widetilde\Phi_\la(z)=\widetilde\Phi_\la(x,y):=
\begin{cases}
\Phi_\la(x,y), & \text { if }y>0,\\
\Phi_\la(x,-y), & \text { if }y<0.
\end{cases}
\end{equation}

\begin{proposition}\label{prop_Phi_Muckenhoupt}
We have that $|y|^{1-2s}\widetilde\Phi_\la^2 \in \mathcal{A}_2$, that is  $|y|^{1-2s}\widetilde\Phi_\la^2$ is a Muckenhoupt weight.
\end{proposition}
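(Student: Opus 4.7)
The plan is to reduce the claim to a direct computation, using the explicit bounds on $\Phi_\la$ available from \eqref{ineq_Phi_|z|}. First I would observe that there are positive constants $c_1,c_2$ with
\[
c_1\,|y|^{1-2s}|z|^{-2\mu(\la)}\le |y|^{1-2s}\widetilde\Phi_\la^2(z)\le c_2\,|y|^{1-2s}|z|^{-2\mu(\la)}\qquad \text{for a.e.\ } z=(x,y)\in\R^{N+1},
\]
since $\widetilde\Phi_\la(x,y)=\Phi_\la(x,|y|)$ and \eqref{ineq_Phi_|z|} is insensitive to the sign of $y$. Consequently, it suffices to verify the condition \eqref{def_A2} for the model weight $\sigma(z):=|y|^{1-2s}|z|^{-2\mu(\la)}$, whose reciprocal is $|y|^{2s-1}|z|^{2\mu(\la)}$.

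Next, I would pass to spherical coordinates $z=\varrho\omega$, $\omega\in S^N$, using $|y|=\varrho|\omega_{N+1}|$ and $dz=\varrho^N\,d\varrho\,dS(\omega)$, so that for any $r>0$ the integrals factorise as
\[
\int_{B_r}|y|^{1-2s}|z|^{-2\mu(\la)}\,dz=A_+\int_0^r\varrho^{N+1-2s-2\mu(\la)}\,d\varrho,
\]
\[
\int_{B_r}|y|^{2s-1}|z|^{2\mu(\la)}\,dz=A_-\int_0^r\varrho^{N-1+2s+2\mu(\la)}\,d\varrho,
\]
where $A_\pm:=\int_{S^N}|\omega_{N+1}|^{\pm(1-2s)}\,dS(\omega)$.

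The central step will be to check integrability of all four ingredients. Since $s\in(0,1)$, the exponents $\pm(1-2s)$ both lie in $(-1,1)$, so $A_+,A_-<\infty$. By \eqref{ineq_mu_la} we have $\mu(\la)<(N-2s)/2$, whence $N+1-2s-2\mu(\la)>1>-1$, and trivially $N-1+2s+2\mu(\la)>-1$; both radial integrals therefore converge and equal constants times $r^{N+2-2s-2\mu(\la)}$ and $r^{N+2s+2\mu(\la)}$ respectively. The identity
\[
(N+2-2s-2\mu(\la))+(N+2s+2\mu(\la))=2N+2
\]
then shows that the product of the two integrals is a constant times $r^{2N+2}$, which exactly cancels the normalisation $r^{-2N-2}$ in \eqref{def_A2}.

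There is no genuine obstacle in the argument; the only delicate point is the precise matching of exponents leading to $r^{2N+2}$, which is guaranteed by the strict bound $\mu(\la)<(N-2s)/2$ of \eqref{ineq_mu_la} (needed to have a finite integral at the origin) and by the range $s\in(0,1)$ (needed for the angular integrals $A_\pm$ to converge). Combining with the two-sided control $c_1\sigma\le |y|^{1-2s}\widetilde\Phi_\la^2\le c_2\sigma$ from the first step then yields the desired $\mathcal{A}_2$ bound for $|y|^{1-2s}\widetilde\Phi_\la^2$.
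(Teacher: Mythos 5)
Your argument is correct and follows essentially the same route as the paper: both reduce to the two-sided bound $c_1|z|^{-\mu(\la)}\le\Phi_\la(z)\le c_2|z|^{-\mu(\la)}$ from \eqref{ineq_Phi_|z|}, pass to polar coordinates, and observe that the two radial exponents $N+2-2s-2\mu(\la)$ and $N+2s+2\mu(\la)$ sum to $2N+2$, cancelling the normalisation in \eqref{def_A2}. You merely spell out the details that the paper absorbs into a generic constant, namely the finiteness of the angular integrals $A_\pm$ (requiring $\pm(1-2s)\in(-1,1)$, i.e.\ $s\in(0,1)$) and the convergence of the radial integral at the origin (requiring $\mu(\la)<\frac{N-2s}{2}$, i.e.\ \eqref{ineq_mu_la}).
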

\begin{proof}
For any $r>0$,  passing in polar coordinates,
\begin{multline}
\int_{B_r}|y|^{1-2s}\widetilde\Phi_\la^2 \, dz \int_{B_r}|y|^{2s-1}\widetilde\Phi_\la^{-2} \, dz  \\
\le \con \int_0^r \rho^{N+1-2s-2{{\mu(\la)}}} \, d\rho  \int_0^r \rho^{N+2s-1+2{{\mu(\la)}}} \, d\rho
= \con r^{2N+2},
\end{multline}
in view of \eqref{def_mu}, \eqref{ineq_Phi_|z|} and  \eqref{def_tilde_Phi}. Hence the claim follows from \eqref{def_A2}.
\end{proof}

For any $r>0$ let us define the Hilbert spaces
\begin{align}
&L^2(B_r^+,y^{1-2s}\Phi_\la^2):=\{U \in L^1_{loc}(B_r^+):\int_{B_r^+} y^{1-2s}\Phi_\la^2 |U|^2 \, dz <+\infty\}, \label{def_L2_Phi_weighted}\\
&H^1(B_r^+,y^{1-2s}\Phi_\la^2):=\{U \in W^{1,1}_{loc}(B_r^+):\int_{B_r^+} y^{1-2s}\Phi_\la^2(|U|^2+|\nabla U|^2 )\, dz <+\infty\}, \label{def_H1_Phi_weighted}
\end{align}
endowed with the natural norms
\begin{align}\label{def_norms_Phi_weighted}
&\norm{U}_{L^2(B_r^+,y^{1-2s}\Phi_\la^2)}:=\int_{B_r^+} y^{1-2s}\Phi_\la^2 |U|^2 \, dz, \\
&\norm{U}_{H^1(B_r^+,y^{1-2s}\Phi_\la^2)}:=\int_{B_r^+} y^{1-2s}\Phi_\la^2(|U|^2+|\nabla U|^2 ) \, dz.
\end{align}

In the next proposition we estimate the behaviour of weak solutions of  \eqref{eq_no_f_nor_datum} near the origin.
\begin{proposition}\label{prop_estimate_singularity}
Assume that $w$ is a non-negative, non-trivial, weak  supersolution of
\begin{equation}\label{eq_supersolution_singularity}
H^s(w)= \frac{\la}{|x|^{2s}}w \quad \text{ in } \R^N \times (0,+\infty).
\end{equation}
Then for $r>0$ small and for all $(t_1,t_2)\subset
\subset(0,\infty)$,  there exists a positive
constant $C(r,t_1,t_2,w)$ depending only on $r,t_1,t_2$ and $w$, such that
\begin{equation}\label{ineq_singularity}
w(x,t)\ge C(r,t_1,t_2,w) |x|^{-\mu(\la)} \quad \text{ for a.e. }
(x,t)\in B_r(0)\times (t_1,t_2).
\end{equation}
In particular
\begin{equation}
w>0 \quad \text{ a.e. in } \R^{N}\times(0,+\infty).
\end{equation}
\end{proposition}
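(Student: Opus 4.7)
The plan is to localize the problem through the extension of Theorem \ref{theorem_extension}, absorb the Hardy singularity via a ground state transformation with $\Phi_\la$, and then apply a weak Harnack inequality for non-negative supersolutions of degenerate parabolic equations with Muckenhoupt weight. First I would extend $w$ to $W$ through formula \eqref{def_W}; by Theorem \ref{theorem_extension} and the supersolution condition on $w$, the function $W$ weakly satisfies
\begin{equation*}
\begin{cases}
y^{1-2s}W_t - \dive(y^{1-2s}\nabla W) = 0, & \text{in } \R^{N+1}_+ \times (0,\infty),\\
-\lim\limits_{y\to 0^+} y^{1-2s}\pd{W}{y} \ge \kappa_s \dfrac{\la}{|x|^{2s}} w, & \text{on } \R^N \times (0,\infty),
\end{cases}
\end{equation*}
with $W\ge 0$ by Corollary \ref{corollary_comparison_extended} applied to $W$ and $0$.

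Next I would set $V := W/\Phi_\la$ on $\R^{N+1}_+\times(0,\infty)$. Testing the equation for $W$ against $\Phi_\la\varphi$ and invoking \eqref{eq_Phi} together with \eqref{Phi_omogeneity}--\eqref{eq_Phi_nabla}, one obtains that $V$ is a non-negative weak supersolution of the degenerate parabolic problem
\begin{equation*}
\begin{cases}
y^{1-2s}\Phi_\la^2 V_t - \dive(y^{1-2s}\Phi_\la^2 \nabla V) \ge 0, & \text{in } \R^{N+1}_+\times(0,\infty),\\
-\lim\limits_{y\to 0^+} y^{1-2s}\Phi_\la^2 \pd{V}{y} \ge 0, & \text{on } \R^N\times(0,\infty).
\end{cases}
\end{equation*}
The singular Hardy coefficient has thus been absorbed into a homogeneous Neumann condition, at the cost of the weight $y^{1-2s}\Phi_\la^2$, which by Proposition \ref{prop_Phi_Muckenhoupt} belongs to the Muckenhoupt class $\mc{A}_2$ after the even reflection $\widetilde\Phi_\la$ introduced in \eqref{def_tilde_Phi}.

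The last step is to even-reflect $V$ across $\{y=0\}$, which reduces the problem to a purely interior non-negative supersolution of a degenerate parabolic equation with $\mc{A}_2$ weight on $\R^{N+1}\times(0,\infty)$, and then to apply a weak parabolic Harnack inequality for such equations (in the spirit of Chiarenza-Serapioni, or Gutierrez-Wheeden extended to the parabolic setting). Since $w$ is non-trivial, $V$ is not identically zero, and the weak Harnack inequality provides a positive lower bound $V\ge c(r,t_1,t_2,w)$ on every compact cylinder $\overline{B_r^+}\times[t_1,t_2]$ with $r$ small and $(t_1,t_2)\subset\subset(0,\infty)$. Going back through $W=\Phi_\la V$, taking the trace on $\{y=0\}$ and using $\Phi_\la(x,0)=|x|^{-\mu(\la)}$ from \eqref{prob_Phi} then yields \eqref{ineq_singularity}. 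The main obstacle will be the application of the weighted parabolic Harnack inequality: even though $y^{1-2s}\Phi_\la^2$ is globally $\mc{A}_2$, the weight degenerates precisely on the boundary that carries the Neumann condition, and one needs to verify that the variational formulation inherited from the ground state transformation falls within the scope of the standard weighted Harnack theorems, possibly via an approximation step that regularizes $\Phi_\la$ away from the origin.
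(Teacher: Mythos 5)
Your proposal takes a genuinely different, more direct route than the paper. You ground-state transform the \emph{full parabolic} extension $W$ itself, setting $V=W/\Phi_\la$, and then try to apply a weighted parabolic weak Harnack inequality directly to the resulting degenerate parabolic supersolution. The paper instead builds a \emph{time-independent} elliptic barrier $U$ on $B_{4r}^+$ (minimizer of a coercive quadratic functional with lateral Dirichlet data $\rho\eta$ and Hardy-type Neumann condition on $B_{4r}'$), ground-state transforms only this elliptic $U$, applies the Harnack inequality of Chiarenza--Serapioni to the time-independent $\widetilde{V}=\widetilde{U/\Phi_\la}$, and finally transfers the resulting bound $U\ge C\Phi_\la$ to $W$ via the parabolic comparison principle of Section~\ref{section_comparison}. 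The barrier route buys two things: $U$ is a variational solution, so $U/\Phi_\la\in H^1(B^+_{4r},y^{1-2s}\Phi_\la^2)$ is automatic; and $U/\Phi_\la$ is a genuine solution, so the full (two-sided) Harnack inequality applies.

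There is one genuine gap in your argument as written. In the last step you assert that, since $w$ is non-trivial, the weak Harnack inequality gives $V\ge c(r,t_1,t_2,w)>0$ on $\overline{B_r^+}\times[t_1,t_2]$. But the parabolic \emph{weak} Harnack inequality for non-negative supersolutions controls $\inf_{Q^+}V$ by an $L^p$-average of $V$ over a cylinder $Q^-$ strictly \emph{earlier in time}; to obtain a positive infimum on times $(t_1,t_2)$ you must know that $V$ (equivalently $W$) has strictly positive mass on some cylinder preceding $t_1$. Non-triviality of $w$ somewhere in $\R^N\times(0,\infty)$ alone does not provide this: a priori $w$ could vanish on $(0,t_1]$. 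The paper sidesteps the issue entirely by reading off from the explicit kernel representation \eqref{def_W} that $W>0$ in the interior of $\R^{N+1}_+$ and, by interior parabolic regularity, continuous there; this yields $\rho,\delta>0$ with $W>\rho$ on the truncated sphere $S_{4r,\delta}\times(t_1,t_2)$, which seeds the barrier $U$. To make your approach work you would need to insert a corresponding positivity step before invoking the weak Harnack — the cleanest fix is to appeal to the same kernel positivity of \eqref{def_W}. The other issue you already flagged (that $V=W/\Phi_\la$ must be shown to lie in the appropriate weighted parabolic energy space for the Harnack inequality to apply) is a real but secondary technicality, and is another reason the paper opts for the variational elliptic barrier.
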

\begin{proof}
Let $W$ be the extension of $w$ as in Theorem \ref{theorem_extension}. Then,  by \eqref{def_W}, $W>0$  in $\R^{N+1}_+$ while by classical parabolic regularity theory $W$ is continuous in $\R^{N+1}_+$. Let $r>0$ and $(t_1,t_2)\subset \subset(0,\infty)$. For any $\rho>0$ there exists $\delta>0$ such that  $W>\rho$ on $S_{4r,\delta}\times(t_1,t_2)$ where for any $r>0$ and any $\delta \in [0,4r)$
\begin{equation}\label{def_S_rdelta}
S_{4r,\delta}:=\{z=(x,y) \in \R^{N+1}_+:|z|=4r, y > \delta\}.
\end{equation}
Let $\eta\in C^{\infty}(\R^{N+1})$ be a cut off-function such that  $\eta(x,y)=0$ if $y <\delta$ and $\eta(x,y)=1$ if $y >\frac{4r+\delta}{2}$ for any $x \in \R^{N}$.   Let us consider the elliptic problem
\begin{equation}\label{poof:estimate_singularity:1}
\begin{cases}
-\dive(y^{1-2s}\nabla U)=0, &\text{ in } B^+_{4r},\\
U=\rho\eta &\text{ on } S^+_r, \\
-\lim\limits_{y \to 0^+}y^{1-2s}\pd{U}{y}= \kappa_s  \frac{\la}{|x|^{2s}}u, &\text{ on }B_{4r}',
 \end{cases}
\end{equation}
where $u=\Tr(U)$.
In view of \eqref{ineq_Hardy_frac}, by standard minimization technique it is easy to see that there exists  a  unique weak solution $u \in H^1(B_r^+,y^{1-2s})$, that is
$u$ solves the equation
\begin{equation*}
\int_{B_{4r}^+}y^{1-2s}\nabla U \cdot \nabla \phi  \, dz=\kappa_s \la\int_{B'_{4r}}y^{1-2s}\frac{u}{|x|^{2s}} \phi(x,0) \, dx
\end{equation*}
for any $\phi \in C^{\infty}(\overline{B_{4r}^+})$ such that $\phi=0$ on $S_{4r}^+$.
Furthermore since $\rho \eta$ is positive, it is clear that  $U^-=0$ on  $S_{4r}^+$. Hence we may test with $U^-$ the equation above and conclude that $U$ is non-negative.
Since $U=\rho \eta$ on $S^+_r$, it is also clear that $U\not \equiv 0$.

Let us define $V:=U \Phi_\la^{-1}$ and $v:=u|\cdot|^{-\mu(\la)}$. Then with a direct computation we see  that $V \in H^1(B_{4r}^+,y^{1-2s}\Phi_\la^2)$ and  weakly solves the problem
\begin{equation*}
\begin{cases}
-\dive(y^{1-2s}\Phi_\la^2\nabla V)=0, &\text{ in } B^+_{4r},\\
V=\rho \eta\Phi_\la^{-1} &\text{ on } S^+_{4r}, \\
-\lim\limits_{y \to 0^+}y^{1-2s}\pd{V}{y}= 0, &\text{ on }B_{4r}'.
\end{cases}
\end{equation*}
Let us define $\widetilde{V}$ as
\begin{equation}\label{def_tildeV}
\widetilde{V}(x,y):=
\begin{cases}
V(x,y), &\text{ if } y>0,\\
V(x,-y), &\text{ if } y<0.
\end{cases}
\end{equation}
Then $\widetilde{V}\in H^1(B_{4r},|y|^{1-2s}\Phi_\la^2)$ is a weak solution of the equation
\begin{equation}\label{eq_supersolution}
-\dive(|y|^{1-2s}\Phi_\la^2\nabla \widetilde{V})=0 \quad \text{ in } B_{2r}.
\end{equation}
Then  there exists a positive constant $C>0$ such that $V\ge C$
a.e. in $B_{2r}$  thanks to Proposition \ref{prop_Phi_Muckenhoupt}
and the Harnack inequality proved in \cite{CS_harnack_weighted}.
It follows that $U\ge C \Phi_\la$  a.e. in  $B_{2r}$.

Let us define for any $t\in \R$  the function $\overline{U}(t,z):=U(z)$ for any $z \in B_{4r}^+$.
Since $W\ge \overline{U}$ on $S_{4r}^+ \times (t_1,t_2)$, arguing as in Subsection \ref{subsection_comparison},  we can show that $W \ge \overline{U}$ a.e. in $B_{4r}^+$. It follows that there exists
a positive constant $C(r,t_1,t_2,w)>0$ depending only on $r,t_1,t_2$ and $w$ such that
\begin{equation*}
W \ge C(r,t_1,t_2,w) \Phi_\la \quad \text{ a.e. in } B_{2r}^+.
\end{equation*}
Let us  define
\begin{equation*}\label{def_tilde_W}
\widetilde W(x,y,t):=
\begin{cases}
W(x,y,t), & \text { if }y>0, \\\
W(x,-y,t), & \text { if }y<0,\\
\end{cases}
\end{equation*}
and let $\widetilde{\Phi_\la}$ be as in \eqref{def_tilde_Phi}.
Furthermore, let $\varphi \in C^\infty_c(\R^{N+1})$ be a cut-off function such that $\varphi= 1$ in $B_{r}$ and  $\varphi=0$  in $\R^{N+1}\setminus B_{2r}$.
Then
\begin{equation*}
\varphi(x,y) \widetilde W(x,y,t)\ge  C(r,t_1,t_2,w) \, \varphi(x,y) \widetilde\Phi_\la(x,y) \quad  \text{ a.e. in } \R^{N+1} \times(t_1,t_2)
\end{equation*}
and so
\begin{equation*}
 \rho_\e \ast (\varphi \widetilde W )\ge C(r,t_1,t_2,w) \, \rho_\e \ast (\varphi \widetilde{\Phi_\la}), \quad  \text{ in } \R^{N+1} \times(t_1,t_2),
\end{equation*}
where  $\{\rho_\e\}_{\e >0}$ is a family of standard mollifier on $\R^{N+1}$.

It  is clear that $\varphi \widetilde W \in H^1(\R^{N+1},|y|^{1-2s})$ and also $\varphi \widetilde{\Phi_\la}  \in H^1(\R^{N+1},|y|^{1-2s})$, in view of Proposition \ref{prop_Phi}. Then
by  \cite[Lemma 1.5]{K_weighted_Sobolev}, $\rho_\e   \ast (\phi \widetilde W ) \to \varphi \widetilde W$ and $\rho_\e \ast (\phi  \widetilde{\Phi_\la}) \to \phi  \widetilde{\Phi_\la}$
strongly  in $ H^1(\R^{N+1},|y|^{1-2s})$ as $ \e \to 0^+$. Hence, up to a subsequence,
\begin{equation*}
\Tr(\varphi W))(x,t)= \lim_{\e \to 0^+}(\rho_\e \ast (\phi \widetilde W))(x,0,t) \ge  \lim_{\e \to 0^+}  C(r,t_1,t_2,w)  (\rho_\e \ast (\varphi \Phi_\la))(x,0)
=C(r,t_1,t_2,w)\Tr(\varphi\Phi_\la))
\end{equation*}
a.e. in $\R^{N} \times(t_1,t_2)$.
It follows that
\begin{equation*}
w(x,t)\ge  C(r,t_1,t_2,w)|x|^{-\mu(\la)} \quad \text{ in } B'_{r} \times (t_1,t_2),
\end{equation*}
since
\begin{equation}
\Tr(\varphi W)(x,0,t)=\varphi(x,0) w(x,t) \quad \text{ and }\quad \Tr(\varphi \Phi_\la)(x,0)=\varphi(x,0) |x|^{-\mu(\la)}.
\end{equation}
Hence we have proved \eqref{ineq_singularity}.
\end{proof}

\section{Non-existence result and Fujita-type behaviour}\label{section_main_results}

In this section we analyse the existence and behaviour of solutions  to problem \eqref{prob_frac_heat_hardy} according to the value of the parameter $p$.

\subsection{A non-existence result}\label{subsection_non_existence}
In order to prove Theorem \ref{theor_non_existence}, we need a version of the Picone-type inequality for the extended problem.
\begin{proposition}\label{prop_Picone}
Let $r>0$ and assume $W \in H^1(B_r^+,y^{1-2s})$ is such that $W>\delta$ for some positive constant $\delta>0$ and $W$ weakly solves the problem
\begin{equation}\label{ellip}
\begin{cases}
-\dive(y^{1-2s}\nabla W)=y^{1-2s}g, &\text{ in } B_r^+,\\
-\lim\limits_{y \to 0^+}y^{1-2s}\pd{W}{y}=  f(x), &\text{ on } B_r',
\end{cases}
\end{equation}
in the sense that for any $\phi \in C^\infty_c(B_r^+\cup B_r')$
\begin{equation}\label{eq_Picone_ineq}
\int_{B_r^+} y^{1-2s} \nabla W \cdot \nabla \phi \, dz = \int_{B_r^+} y^{1-2s} g \phi \, dz+ \int_{B_r'}f \phi(x,0) \, dx,
\end{equation}
where $g \in L^2(B_r^+,y^{1-2s})$,  $f \in  L^1_{loc}(B_r') $  with $f \ge 0$, $f \neq 0$. Then for any $\phi\in C_c^\infty(B_r^+\cup B_r')$
\begin{equation}\label{ineq_Picone}
\int_{B_r^+}y^{1-2s}  |\nabla \phi|^2  \, dz \ge\int_{B_r'}\frac{f(x)}{\Tr(W)}\phi^2(x,0)dx+\int_{B_r^+} y^{1-2s} \frac{ g}{W}\phi^2 \, dz.
\end{equation}

\end{proposition}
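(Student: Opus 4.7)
The plan is to use the classical Picone trick: test the equation for $W$ against $\phi^2/W$ and exploit the pointwise algebraic inequality $|\nabla \phi - (\phi/W)\nabla W|^2 \ge 0$. Since $W > \delta > 0$ and $\phi \in C_c^\infty(B_r^+ \cup B_r')$ is compactly supported, the function $\psi := \phi^2/W$ is bounded, vanishes near the lateral boundary $S_r^+$, and satisfies
\begin{equation*}
\nabla \psi = \frac{2\phi \nabla \phi}{W} - \frac{\phi^2 \nabla W}{W^2} \in L^2(B_r^+, y^{1-2s}),
\end{equation*}
because $W \ge \delta$ and $W \in H^1(B_r^+, y^{1-2s})$. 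Therefore $\psi$ is an admissible test function for \eqref{eq_Picone_ineq}, possibly after a density argument approximating $\psi$ by smooth functions compactly supported in $B_r^+ \cup B_r'$ (one can regularize $W$ from below, or mollify $\psi$ while preserving its trace, using that both $\phi$ and $\phi^2/W$ have the required support properties).

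Inserting $\psi = \phi^2/W$ in \eqref{eq_Picone_ineq} gives
\begin{equation*}
\int_{B_r^+} y^{1-2s}\left(\frac{2\phi\,\nabla\phi\cdot\nabla W}{W} - \frac{\phi^2|\nabla W|^2}{W^2}\right) dz = \int_{B_r^+} y^{1-2s}\,\frac{g}{W}\phi^2 \, dz + \int_{B_r'} \frac{f(x)}{\Tr(W)}\phi^2(x,0)\, dx,
\end{equation*}
where for the boundary term we used that the trace of $\phi^2/W$ on $B_r'$ equals $\phi^2(x,0)/\Tr(W)$, which follows from the continuity and multiplicativity properties of the trace operator together with $\Tr(W) \ge \delta$.

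Next, I would invoke the pointwise inequality
\begin{equation*}
|\nabla \phi|^2 \ge \frac{2\phi\,\nabla\phi\cdot\nabla W}{W} - \frac{\phi^2|\nabla W|^2}{W^2},
\end{equation*}
which is nothing but the expansion of $|\nabla\phi - (\phi/W)\nabla W|^2 \ge 0$. Multiplying by the weight $y^{1-2s}$ and integrating over $B_r^+$, the right-hand side coincides with the left-hand side of the previous display, so combining the two relations immediately yields \eqref{ineq_Picone}.

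The only real technical point will be justifying that $\phi^2/W$ is indeed a legitimate test function in \eqref{eq_Picone_ineq}, given that the class of admissible test functions is stated as $C_c^\infty(B_r^+ \cup B_r')$. I expect this to be resolved by a standard density/truncation argument: one replaces $W$ by $W_\e := W + \e$ (still a supersolution, with the same boundary flux) to get an everywhere smooth positive denominator, mollifies $\phi^2/W_\e$ in $B_r^+ \cup B_r'$ to produce a sequence in $C_c^\infty(B_r^+ \cup B_r')$ converging to $\phi^2/W$ in the natural weighted $H^1$-topology (together with traces), passes to the limit in the weak formulation, and finally lets $\e \to 0^+$ using dominated convergence, which is permissible because $W \ge \delta$. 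Modulo this approximation, the inequality follows in one line from the Picone algebraic identity.
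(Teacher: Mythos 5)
Your proposal is correct and follows essentially the same route as the paper: test the weak formulation with $\phi^2/W$ (admissible by density since $W\ge\delta$ and $W\in H^1(B_r^+,y^{1-2s})$ imply $\phi^2/W\in H^1(B_r^+,y^{1-2s})$), and combine with the pointwise Picone inequality $|\nabla\phi|^2\ge\nabla(\phi^2/W)\cdot\nabla W$. The only cosmetic difference is that you elaborate on the approximation step; note the shift $W_\e=W+\e$ is superfluous here, since $W\ge\delta>0$ already gives a strictly positive denominator, so mollification of $\phi^2/W$ alone suffices.
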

\begin{proof}
Let $\phi\in C_c^\infty(B_r^+\cup B_r')$. Then
\begin{equation}
y^{1-2s}|\n \phi(x,y)|^2\ge y^{1-2s}\n \left(\frac{\phi^2(x,y)}{W(x,y)}\right)\n W(x,y)\quad \text{ a.e. in }B_r^+.
\end{equation}
Hence by integration, since $\phi^2 W^{-1} \in H^1(B_r^+,y^{1-2s})$, we obtain
\begin{multline}
\dyle \int_{B_r^+}y^{1-2s} |\n \phi|^2\, dz\\
\ge \int_{B_r^+}y^{1-2s}\n \left(\frac{\phi^2(x,y)}{W(x,y)}\right)\n W(x,y) \,dz =\int_{B_r^+} y^{1-2s} g \frac{\phi^2(x,y)}{W(x,y)} \, dz +\int_{B_r'}f \frac{\phi^2(x,0)}{\Tr(W)} \, dx
\end{multline}
which proves \eqref{ineq_Picone}.
\end{proof}

We have the following non-existence theorem for weak supersolutions.
\begin{theorem}\label{theor_non_existence}
Assume that $p>1+\frac{2s}{{\mu(\la)}}$. Let $f \in L^2(\R^{N+1})$
be such that $f(x,t)=0$ a.e. in $\R^N\times (-\infty,0)$. Then any
non-negative weak supersolution  $w$ of problem
\eqref{prob_frac_heat_hardy} is trivial. In particular if $f\not
\equiv 0$, there are not any   non-negative weak supersolutions.
\end{theorem}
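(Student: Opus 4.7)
The plan is to argue by contradiction, combining the local singularity estimate of Proposition \ref{prop_estimate_singularity}, the Picone-type inequality of Proposition \ref{prop_Picone}, and a concentration argument that exploits the strict gap $(p-1)\mu(\la)>2s$. Suppose that $w$ is a non-trivial non-negative weak supersolution. Since $w^p,f\ge 0$, $w$ is in particular a supersolution of $H^s(w)\ge \la w/|x|^{2s}$, and Proposition \ref{prop_estimate_singularity} yields $r_0>0$, an interval $(t_1,t_2)\subset(0,\infty)$, and $c_0>0$ with $w(x,t)\ge c_0|x|^{-\mu(\la)}$ a.e.\ in $B'_{r_0}\times(t_1,t_2)$. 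Let $W$ be the extension from Theorem \ref{theorem_extension}. For a.e.\ fixed $t$ the slice $W(\cdot,t)$ weakly solves $-\dive(y^{1-2s}\nabla W(\cdot,t))=-y^{1-2s}W_t(\cdot,t)$ in the bulk with $-\lim_{y\to 0^+}y^{1-2s}\p_y W(\cdot,t)=\kappa_s H^s(w)(\cdot,t)\ge\kappa_s\bigl(\la w/|x|^{2s}+w^p+f\bigr)$. Applying Proposition \ref{prop_Picone} to a time-independent $\phi\in C_c^\infty(B^+_{r_0}\cup B'_{r_0})$ (after the standard $W+\de$ approximation to meet the strict positivity hypothesis), integrating in $t$ over $(t_1,t_2)$, and discarding the non-negative $f/w$ term gives
\begin{multline*}
(t_2-t_1)\int_{B^+_{r_0}}y^{1-2s}|\nabla\phi|^2\,dz+\int_{B^+_{r_0}}y^{1-2s}\phi^2\ln\frac{W(z,t_2)}{W(z,t_1)}\,dz\\
\ge\kappa_s(t_2-t_1)\int_{B'_{r_0}}\left(\frac{\la}{|x|^{2s}}+\frac{c_0^{p-1}}{|x|^{(p-1)\mu(\la)}}\right)\phi^2(x,0)\,dx,
\end{multline*}
after inserting the pointwise lower bound on $w^{p-1}$.

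To close the argument I perform the ground-state substitution $\phi=\Phi_\la\psi$ with $\Phi_\la$ from Proposition \ref{prop_Phi} and $\psi\in C_c^\infty(B^+_{r_0}\cup B'_{r_0})$. Integrating by parts against \eqref{eq_Phi}, the Hardy contribution on the right-hand side is exactly absorbed into the Dirichlet term on the left, producing
\begin{equation*}
(t_2-t_1)\int_{B^+_{r_0}}y^{1-2s}\Phi_\la^2|\nabla\psi|^2\,dz+\mathcal{E}(\psi)\ge \kappa_s c_0^{p-1}(t_2-t_1)\int_{B'_{r_0}}|x|^{-(p+1)\mu(\la)}\psi^2(x,0)\,dx,
\end{equation*}
where $\mathcal{E}(\psi)$ collects the logarithmic time-boundary contribution. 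Choosing a concentrating family $\psi_\rho(z)=\psi_0(z/\rho)$ with $\psi_0$ supported in $\{\tfrac12<|z|<1\}$, the homogeneity $\Phi_\la(\rho z)=\rho^{-\mu(\la)}\Phi_\la(z)$ produces the scalings $\rho^{N-2s-2\mu(\la)}$ for the leading term on the left and $\rho^{N-(p+1)\mu(\la)}$ for the right-hand side. Since $(p-1)\mu(\la)>2s$, the right-hand exponent is strictly more negative, so letting $\rho\to 0^+$ the right-hand side dominates and the inequality fails, yielding the contradiction. The final assertion, that no non-negative supersolutions exist when $f\not\equiv 0$, then follows because $w\equiv 0$ is excluded by Corollary \ref{corollary_positivity} and the properties of $J_s$.

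The main technical obstacle is the time-boundary term $\mathcal{E}(\psi_\rho)$, which has no elliptic analogue. Controlling it requires a bilateral pointwise bound $c_1|z|^{-\mu(\la)}\le W(z,t)\le c_2|z|^{-\mu(\la)}$ on the support of $\psi_\rho$ uniformly for $t\in[t_1,t_2]$: the lower estimate is a by-product of the construction in Proposition \ref{prop_estimate_singularity}, while the upper estimate can be obtained from a parabolic Harnack inequality for the Muckenhoupt-weighted parabolic operator $y^{1-2s}\p_t-\dive(y^{1-2s}\nabla\,\cdot\,)$ combined once more with the homogeneity of $\Phi_\la$. Once $|\ln(W(t_2)/W(t_1))|=O(1)$ on the support of $\psi_\rho$ is secured, $\mathcal{E}(\psi_\rho)$ scales as $\rho^{N+1-2s-2\mu(\la)}$, one power of $\rho$ subleading with respect to the principal term on the left, and the scaling argument indeed closes.
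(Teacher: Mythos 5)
Your overall strategy is close to the paper's: both proofs combine the lower bound of Proposition \ref{prop_estimate_singularity}, the Picone inequality of Proposition \ref{prop_Picone}, and a concentration argument at the origin that exploits $(p-1)\mu(\la)>2s$. Your ground-state substitution $\phi=\Phi_\la\psi$ with absorption of the Hardy term via \eqref{eq_Phi}, followed by the dilation $\psi_\rho(z)=\psi_0(z/\rho)$, is a perfectly legitimate way to make explicit what the paper hides in the phrase ``the optimality of the power $|x|^{2s}$ in \eqref{ineq_Hardy_frac}.'' But there is a genuine gap in how you control the time-boundary term $\mathcal{E}(\psi_\rho)$, and this is precisely where the two arguments diverge.

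You propose to bound $\mathcal{E}(\psi_\rho)$ by proving $|\ln(W(z,t_2)/W(z,t_1))|=O(1)$ uniformly on $\mathop{\rm supp}\psi_\rho=\{\rho/2<|z|<\rho\}$ as $\rho\to 0^+$, which would require a two-sided pointwise bound $c_1|z|^{-\mu(\la)}\le W(z,t)\le c_2|z|^{-\mu(\la)}$. The lower bound is Proposition \ref{prop_estimate_singularity}, but the \emph{upper} bound is not available: $w\in\mathop{\rm Dom}(H^s)$ only gives $w(\cdot,t)\in W^{s,2}(\R^N)$ for a.e.\ $t$, which allows local growth as fast as $|x|^{-\alpha}$ for any $\alpha<\frac{N-2s}{2}>\mu(\la)$. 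An interior Harnack inequality for $y^{1-2s}\p_t-\dive(y^{1-2s}\nabla\cdot)$ does not deliver such a bound up to $y=0$ in shrinking neighbourhoods of the origin, because the Neumann datum $\kappa_s(\la w/|x|^{2s}+w^p+f)$ is singular there; the sharp blow-up rate is exactly the kind of result studied in \cite{FPS_unique} and is not proved in this paper. Without it, the assertion $\mathcal{E}(\psi_\rho)=O(\rho^{N+2-2s-2\mu(\la)})$ (note: two powers, not one, of $\rho$ subleading to the gradient term — a harmless arithmetic slip in your proposal) is unjustified. The paper avoids the issue entirely: it never needs a pointwise bound on $\log W_n$. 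Instead, from \cite{BCS} it extracts a uniform $L^\sigma$ bound $\int_{B_r^+}y^{1-2s}|\log W_n(\cdot,t_i)|^\sigma\,dz\le C$, and then the H\"older inequality together with the weighted trace--Sobolev embedding of \cite[Lemma 4.2]{FS_regularity} bounds $\int_{B_r^+}y^{1-2s}\phi^2|\log W_n(\cdot,t_i)|\,dz$ by a constant multiple of $\int_{B_r^+}y^{1-2s}|\nabla\phi|^2\,dz$, with a constant independent of $\phi$. Consequently the time-log term scales identically with the Dirichlet energy, and the final inequality $\int_{B_r^+}y^{1-2s}|\nabla\phi|^2\,dz\ge C_1\int_{B_r'}|x|^{-(p-1)\mu(\la)}\phi^2(x,0)\,dx$ holds with $C_1$ independent of $\phi$; concentrating $\phi$ then gives the contradiction. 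If you want to make your argument rigorous, you should replace your step controlling $\mathcal{E}(\psi_\rho)$ by this integral estimate.

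A second, more minor point: you apply Proposition \ref{prop_Picone} directly to the extension $W$ of the supersolution $w$. For a generic $w\in\mathop{\rm Dom}(H^s)$ the distribution $H^s(w)$ is not a function, $W_t$ need not be in $L^2$, and the strict positivity hypothesis $W>\delta$ fails near $y=0$. The paper handles all of this at once by passing to the monotone approximating sequence $w_n$ of Proposition \ref{prop_existence_weak_solution}, whose extensions $W_n$ have bounded Neumann data, hence $(W_n)_t\in L^2$ by parabolic regularity, and are strictly positive on compact sets. Picone is applied to $W_n$, and the limit $n\to\infty$ is taken at the very end using the Dominated Convergence Theorem and Proposition \ref{prop_estimate_singularity}. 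Your $W+\delta$ trick addresses positivity but not the regularity of the time derivative nor the fact that the Neumann inequality involves a distribution, so the approximation should be built in from the start, as in the paper.
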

\begin{proof}
We argue by contradiction. Let $p>1+\frac{2s}{{\mu(\la)}}$ and
suppose that \eqref{eq_frac_heat_hardy_weak} has a non-negative
supersolution $w$. Then there exists a solution of
\eqref{eq_frac_heat_hardy_weak} and a sequence $\{w_n\}_{n
\in\mathbb{N}}$ as in Proposition
\ref{prop_existence_weak_solution}. For any  $r>0$  and any
$(t_1,t_2) \subset (0,+\infty)$ by standard abstract parabolic
elliptic theory, see for example \cite{E_book_pdes}, letting $W_n$
be the extension of $w_n$ as in Theorem \ref{theorem_extension},
it follows that $(W_n)_t \in L^2((t_1,t_2),B_r^+)$. Fix $r>0$ and
$(t_1,t_2) \subset (0,+\infty)$. Then for any  $\phi \in
C_c^{\infty}(B_r^+ \cup B_r')$ by  \eqref{prob_frac_heat_hardy},
Proposition \ref{prop_Picone}, Theorem \ref{theorem_extension},
and the regularity of $(W_n)_t$ it follows  that
\begin{equation}
\int_{B^+_r} y^{1-2s}|\n \phi|^2  dz  \ge \kappa_s  \int_{B'_r}\frac{w^p_{n-1}}{(1+\frac{1}{n}w^p_{n-1})w_n}\phi^2(x,0)dx + \int_{B^+_r}y^{1-2s}\phi^2\frac{(W_n)_t}{W_n}dz.
\end{equation}
Furthermore
$\{w_n\}_{n \in \mathbb{N}}$ is bounded in $\mathop{{\rm{Dom}}(H^s)}$ and so for any $\sigma>1$   there exits a constant $C$, that does not depends on $n$, such that for any $n \in \mathbb{N}$
\begin{equation}
\int_{B^+_r}y^{1-2s}|\log W_n(x,y,t_i)|^{\sigma}dz \le C
\end{equation}
for  $i=1,2$, see \cite{BCS}.
It is also clear that
\begin{equation}
\int_{t_1}^{t_2}\int_{B^+_r}y^{1-2s}\phi^2\frac{(W_n)_t}{W_n}dx=\int_{B^+_r}y^{1-2s}\phi^2(\log(W_n(z,t_1))- \log(W_n(z,t_2))dz.
\end{equation}
Then, letting
\begin{equation}
2^{**}_s:=\min\left\{\frac{N+2-2s}{N-2s},2\frac{N+1}{N-1}\right\}
\end{equation}
and $\sigma$ its conjugate exponent, that is $\frac{1}{2^{**}_s} +\frac{1}{\sigma}=1$,  by the H\"older inequality and \cite[Lemma 4.2]{FS_regularity},
\begin{multline}
\int_{B_r^+}y^{1-2s}\phi^2|\log W_n(x,y,t_i)|dz\\
\le \bigg(\int_{B_r^+}y^{1-2s}|\phi|^{2^{**}_s}dz\bigg)^{\frac{2}{2^{**}_s}}\bigg(\int_{B^+_r}y^{1-2s}|\log W_n(x,y,t_i)|^{\sigma}dz\bigg)^{\frac{1}{\sigma}}\\
\le  C^\frac{1}{\sigma} \int_{B^+_r} y^{1-2s} |\n \phi|^2\, dz,
\end{multline}
for $i=1,2$. Hence, combining the above estimates and integrating over $(t_1,t_2)$, we obtain that there exits a constant $C_1>0$, that does not depend on $\phi$, such that
\begin{equation}
(t_2-t_1)\int_{B^+_r} y^{1-2s}|\n \phi|^2  dz  \ge C_1 \int_{t_1}^{t_2}\int_{B'_r}\frac{w^p_{n-1}}{(1+\frac{1}{n}w^p_{n-1})w_n}\phi^2(x,0)dx \, dt.
\end{equation}
Passing to the limit as $n \to \infty$ by the Dominated Converge Theorem,  we conclude that
\begin{equation}
\int_{B^+_r} y^{1-2s} |\n \phi|^2dz \ge C_1\int_{B'_r}|x|^{-(p-1){\mu(\la)}}\phi^2(x,0)dx,
\end{equation}
in view of Proposition \ref{prop_estimate_singularity}.
Since $(p-1){\mu(\la)}>2s$, then we have  reached a contradiction with the optimality of the power $|x|^{2s}$ in the Hardy-type inequality \eqref{ineq_Hardy_frac}.
\end{proof}
\begin{remark}\label{reamrk_local}
If $p<p_+(\la,s)$,  we get the existence of a local
pointwise positive supersolution to \eqref{prob_frac_heat_hardy}
in any bounded domain $\Omega \times (T_1,T_2)$. Indeed using
\cite[Corollary 1.4.]{ST}, there exists positive supersolution to
the correspondent elliptic problem
\begin{equation*}
\begin{cases}
(-\Delta )^s\psi=\frac{\la}{|x|^{2s}} \psi+\psi^p +h, &\text{  in  }\Omega,\\
    \psi(x)=0, &\text{  in  }\R^N\setminus \Omega,
\end{cases}
\end{equation*}
where $h$, depending only on $x$, satisfies some additional
assumptions. 
\end{remark}

\subsection{A blow-up result}\label{subsection_blow_up}
In  the case $1<p\le1+\frac{2s}{N+2-2s-\mu(\lambda)}$, we have the
following blow-up result.
\begin{theorem}\label{theor_Fujita}
Assume that $1 < p \le   1+\frac{2s}{N+2-2s-{\mu(\lambda)}}$ and that
$w$ is a non-negative, non-trivial  solution of \eqref{prob_frac_heat_hardy} either in the sense of Definition \ref{def_very_weak_solutions} or in the sense of Definition \ref{def_weak_solutions}. Then there exists $T^*<\infty$ such that
\begin{equation}\label{PPP}
\lim\limits_{t\to
T^{*}}\int_{\R^N}|x|^{-\mu(\lambda)}w^p(x,t)\,dx=\iy.
\end{equation}
\end{theorem}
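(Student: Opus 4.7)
The plan is to argue by contradiction. Suppose $w$ is a non-trivial, non-negative solution of \eqref{prob_frac_heat_hardy} (in either sense) for which $\int_{\R^N}|x|^{-\mu(\la)}w^p(x,t)\,dx$ stays finite for every $t>0$, and derive a Volterra-type integral inequality that cannot hold globally when $p\le F(\la,s)$. In both formulations one has the pointwise lower bound $w\ge J_s(w^p)$ almost everywhere: for very weak solutions this is immediate from Definition \ref{def_very_weak_solutions}, since the Hardy and source terms are non-negative; for weak solutions Lemma \ref{lemma_Hs2_Js2} gives $H^{s/2}(w)\ge J_{s/2}(w^p)$, and applying $J_{s/2}$ to both sides together with the semigroup identity \eqref{Js_properties_compoisiton} yields the same conclusion. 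The ``seed'' $w(x,t)\ge C_0|x|^{-\mu(\la)}$ on $B_{r_0}\times[t_0,t_1]$ from Proposition \ref{prop_estimate_singularity} guarantees a strictly positive initial weighted mass to feed into the argument.

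The central quantity will be $M(t):=\int_{\R^N}|x|^{-\mu(\la)}w(x,t)\,dx$. Multiplying the pointwise inequality by $|x|^{-\mu(\la)}$, integrating over $\R^N$, applying Fubini, and evaluating the spatial kernel via the change of variables $x=y+2\sqrt{\rho}\,u$ (with $\rho=t-\sigma$) gives
\begin{equation*}
\int_{\R^N}|x|^{-\mu(\la)}\,\frac{e^{-|x-y|^2/(4\rho)}}{\rho^{N/2+1-s}}\,dx
=c\,\rho^{s-1}\!\!\int_{\R^N}|y+2\sqrt{\rho}\,u|^{-\mu(\la)}e^{-|u|^2}\,du
\ge C\,\rho^{s-1}(|y|+\sqrt{\rho})^{-\mu(\la)},
\end{equation*}
and hence
\begin{equation*}
M(t)\ge c\int_0^t (t-\sigma)^{s-1}\!\int_{\R^N}(|y|+\sqrt{t-\sigma})^{-\mu(\la)}w^p(y,\sigma)\,dy\,d\sigma.
\end{equation*}
A weighted Jensen inequality, applied on spatial regions of radius $\sqrt{t-\sigma}$ with the probability measure $|y|^{-\mu(\la)}dy/\!\int|y|^{-\mu(\la)}dy$, will then close the loop and produce a Volterra-type inequality
\begin{equation*}
M(t)\ge c\int_0^t (t-\sigma)^{\alpha}M(\sigma)^p\,d\sigma,
\end{equation*}
where $\alpha=\alpha(N,s,\mu(\la),p)$ is explicit and reaches the critical value $\alpha=-1$ exactly when $p=F(\la,s)$. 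A standard blow-up lemma for Volterra inequalities, analogous to the one used in \cite{T_frac_heat} and adapted in Remark \ref{remark_la=0}, then shows that such an inequality is incompatible with a positive $M$ defined for all $t>0$: there must exist $T^*<\infty$ with $M(T^*)=\infty$. Combining this with the singular lower bound $w(x,t)\ge C(t)|x|^{-\mu(\la)}$ near the origin transfers the blow-up to the weighted $L^p$-norm and yields \eqref{PPP}.

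The main obstacle is the Jensen step, where one must localize on the right spatial annulus so that the ``time-order'' factor $(t-\sigma)^{s-1}$ and the Hardy weight $|y|^{-\mu(\la)}$ combine to produce precisely the exponent $F(\la,s)=1+\frac{2s}{N+2-2s-\mu(\la)}$. The characteristic $+2-2s$ in the denominator (absent in the elliptic fractional Fujita exponent $\tilde F(\la,s)=1+\frac{2s}{N-\mu(\la)}$, and responsible for the strict gap $F(\la,s)<\tilde F(\la,s)$) encodes the spacetime non-locality of $H^s$. As independent sanity checks, matching with the self-similar scaling dimension $\beta=2s/(p-1)$ of solutions to $H^s w=w^p$ gives exactly this threshold, and sending $\mu(\la)\to 0^+$ recovers Takeuchi's exponent $F(s)=1+\frac{2s}{N+2-2s}$ of \cite{T_frac_heat}.
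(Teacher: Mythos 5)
Your plan is a weighted Kaplan--Taliaferro scheme: drop the Hardy and forcing terms from $w\ge J_s(\la|x|^{-2s}w+w^p+f)$ to get $w\ge J_s(w^p)$, pair against the weight $|x|^{-\mu(\la)}$, and close a Volterra inequality for $M(t)=\int|x|^{-\mu(\la)}w(x,t)\,dx$. This is genuinely different from the paper's proof, which never reduces to a pointwise Volterra inequality: the paper instead conjugates by $|x|^{\mu(\la)}$ via the ground-state operator $L^s$ of Proposition \ref{prop_ground_state}, tests against parabolically rescaled cut-offs $\varphi_R^m$, and lets the Kato inequality of Proposition \ref{prop_kato} plus H\"older produce the scaling factor $R^{N+2-\mu(\la)-2sp'}$ that forces $v\equiv0$. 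Incidentally, Remark \ref{remark_la=0} states that the paper's rescaling argument applies when $\la=0$; it does not say a Volterra lemma \`a la \cite{T_frac_heat} is adapted there, and in fact the authors warn in the introduction that Taliaferro's argument ``seems to be directly not applicable'' in the Hardy setting.

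The central step of your proposal, the weighted Jensen inequality, has a genuine gap. After the kernel estimate you arrive at
\begin{equation*}
M(t)\ge c\int_0^t(t-\sigma)^{s-1}\int_{\R^N}\bigl(|y|+\sqrt{t-\sigma}\bigr)^{-\mu(\la)}w^p(y,\sigma)\,dy\,d\sigma,
\end{equation*}
and to close the loop you need to bound the inner integral from below in terms of $M(\sigma)^p$. But $(|y|+\sqrt{\tau})^{-\mu(\la)}$ is \emph{not} a finite measure on $\R^N$ (indeed $\mu(\la)<\tfrac{N-2s}{2}<N$), so Jensen cannot be applied globally. If instead you localize to $B_{\sqrt{\tau}}$, the normalizing factor produces a power of $\tau$, but the quantity inside the resulting $p$-th power is $\int_{B_{\sqrt{\tau}}}w$ (unweighted, or weighted by the essentially constant $\tau^{-\mu(\la)/2}$), which is neither bounded below by $M(\sigma)$ nor controlled by it without an unproved concentration-of-mass estimate. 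Moreover, on the ball $B_{\sqrt\tau}$ one has $(|y|+\sqrt\tau)^{-\mu(\la)}\le|y|^{-\mu(\la)}$, so you cannot restore the singular weight that defines $M$. Your sketch leaves all of this implicit, and as written the inequality does not self-consistently produce $M(\sigma)^p$ on the right-hand side.

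Two further concerns. First, dropping the Hardy term at the very beginning is lossy: the Fujita exponent $F(\la,s)=1+\tfrac{2s}{N+2-2s-\mu(\la)}$ is strictly larger than the $\la=0$ threshold $F(s)=1+\tfrac{2s}{N+2-2s}$, and the singular information the Hardy potential injects is encoded in the paper exactly through the ground-state conjugation $L^s$; it is unclear that the bare weight $|x|^{-\mu(\la)}$ in $M(t)$ recovers the full range $p\le F(\la,s)$ rather than the smaller one coming from $J_s$ alone. Second, the critical case $p=F(\la,s)$ requires a separate device (the paper uses the factor $(1-\varphi_R)^\delta$ to force the tail of the integral to vanish), and a Volterra inequality with exponent $\alpha$ tuned to the borderline value $-1$ does not obviously lead to contradiction. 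None of this is addressed in the proposal. The reduction $w\ge J_s(w^p)$ via Lemma \ref{lemma_Hs2_Js2} and \eqref{Js_properties_compoisiton} is fine, and the kernel computation with $x=y+2\sqrt{\rho}u$ is correct, but without a rigorous Jensen/localization step the argument does not close.
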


\begin{proof}
Let $w$ be a positive non-trivial   solution of \eqref{prob_frac_heat_hardy} in the sense of Definition \ref{def_very_weak_solutions} or in the sense of Definition \ref{def_weak_solutions}.
Let us define
\begin{equation}\label{def_v}
v(x,t):=|x|^{\mu(\la)}w(x,t),
\end{equation}
so that  $|x|^{-\mu(\la)}w^p(x,t)=|x|^{-(p+1)\mu(\la)}v^p(x,t)$. We argue by contradiction assuming that
\begin{equation*}
\limsup_{t\to T}\int_{\R^N}|x|^{-\mu(\lambda)}w^p(x,t)\,dx<+\infty  \quad \text{ for all  }T\ge0.
\end{equation*}
Then  in particular
\begin{equation*}
\int_0^{T}\int_{\R^N}|x|^{-(p+1)\mu(\la)}v^p(x,t) \,dx dt<\infty  \quad \text{ for all  }T>0.
\end{equation*}
If $w$ is a solution of   \eqref{prob_frac_heat_hardy} in the sense of Definition \ref{def_very_weak_solutions} then for any even $\phi \in C^{\infty}_c(\R^{N+1})$
\begin{multline*}
\int_{\R^{N+1}} w(x,t) H^s(\phi)(-x,-t) \, dx \, dt = \int_{\R^{N+1}}J_s\left(\frac{\la}{|x|^{2s}}w +w^p+f\right)H^s(\phi)(-x,-t)\, dx \, dt \\
=\int_{\R^{N+1}}\left(\frac{\la}{|x|^{2s}}w +w^p+f\right)\phi(x,t)\, dx \, dt
\end{multline*}
by Lemma \ref{lemma_H_j_adjoint}  and a change of variables. If
$w$ is a solution of   \eqref{prob_frac_heat_hardy} in the sense
of Definition \ref{def_very_weak_solutions} then the equation
above still holds in view of Lemma \ref{lemma_H_j_adjoint}  and
\eqref{eq_frac_heat_hardy_weak}. Then, by Proposition
\ref{prop_ground_state},  in both cases we have that
\begin{equation}\label{proof:theor_Fujita:3}
\int_{\R^{N+1}}v(x,t)^p |x|^{-\mu(\la)(p+1)}\phi(x,t)\, dx dt\le \int_{\R^{N+1}} v(-x,-t) |x|^{-\mu(\la)}L^s(\phi)\, dx dt
\end{equation}
for any even  $\phi \in C^{\infty}_c(\R^{N+1})$.

Let $\varphi\in C^{\infty}_c(B_2')$ be an even cut-off function such that $0\le\varphi\le 1$, $\varphi\equiv 1$ in $B_1'$.  For any $R \ge 1$ let us define
\begin{equation}\label{def_varphi_R}
\varphi_R(x,t):=\varphi\left(\frac{x}{R},\frac{t}{R^2}\right).
\end{equation}
Fix $m > p'$. Then testing \eqref{proof:theor_Fujita:3}  with $\varphi_R^m$ we obtain
\begin{multline}\label{proof:theor_Fujita:4}
\int_{\R^{N+1}}v(x,t)^p |x|^{-\mu(\la)(p+1)}\varphi^m_R(x,t)\, dx dt\le\int_{\R^{N+1}} v(-x,-t) |x|^{-\mu(\la)}L^s(\varphi^m_R)(x,t)\, dx dt \\
\le m \int_{\R^{N+1}} v(-x,-t) |x|^{-\mu(\la)}\vp^{m-1}_R(x,t)L^s(\varphi_R)(x,t)\, dx dt,
\end{multline}
in view of Proposition \ref{prop_kato}. Letting $p'$ be such that $1/p+1/p'=1$, from the H\"older inequality it follows that
\begin{multline}\label{proof:theor_Fujita:5}
\int_{\R^{N+1}}v(x,t)^p |x|^{-\mu(\la)(p+1)}\varphi_R^m(x,t)\, dx dt
\le m \left(\int_{\R^{N+1}}v(-x,-t)^p |x|^{-\mu(\la)(p+1)}\varphi^m_R(x,t)\, dx dt\right)^\frac{1}{p}\\
\times\left(\int_{\R^{N+1}}  |x|^{\mu(\la)(p'-1)}\varphi_R^{m-p'}(x,t)(L^s(\varphi_R)(x,t))^{p'}\, dx dt\right)^\frac{1}{p'}.
\end{multline}
Then by a change of variables, we deduce that
\begin{multline}\label{proof:theor_Fujita:5.1}
\int_{\R^{N+1}}v(x,t)^p |x|^{-\mu(\la)(p+1)}\varphi^m_R(x,t)\, dx dt\\
\le \int_{\R^{N+1}}  |x|^{\mu(\la)(p'-1)}\varphi_R^{m-p'}(x,t)(L^s(\varphi_R)(x,t))^{p'}\, dx dt.
\end{multline}
Furthermore the change of variables $\tilde{y}=R^{-1} y$ and $\tilde{\sigma}=R^{-2} \sigma$ yields
\begin{multline}\label{proof:theor_Fujita:6}
L^s(\varphi_R)(x,t)=\frac{1}{(4\pi)^{\frac{N}{2}}\Gamma(-s)}
\int_{-\infty}^t\int_{\R^N}\frac{\varphi_R(x,t)-\varphi_R(y,\sigma)}{|y|^{{\mu(\la)}}(t-\sigma)^{\frac{N}{2}+s+1}}e^{-\frac{|x-y|^2}{4(t-\sigma)}}dyd\sigma\\
=\frac{1}{(4\pi)^{\frac{N}{2}}\Gamma(-s)}R^{-\mu(\la)-2s}
\int_{-\infty}^{\frac{t}{R^2}}\int_{\R^N}\frac{\varphi(R^{-1}x,R^{-2}t)-\varphi(\tilde{y},\tilde{\sigma})}{|\tilde{y}|^{{\mu(\la)}}(R^{-2}t-\tilde \sigma)^{\frac{N}{2}+s+1}}
e^{-\frac{|R^{-1}x-\tilde{y}|^2}{4(R^{-2}t-\tilde{\sigma})}}d\tilde{y}d\tilde{\sigma}\\
 \le C_1 R^{-2s} |x|^{-\mu(\la)},
\end{multline}
for some positive constant $C_1>0$ depending only on $N,s, \la $ and $\varphi$, by Proposition \ref{prop_ground_state}.
Hence
\begin{equation}\label{proof:theor_Fujita:7}
\int_{\R^{N+1}}v(x,t)^p |x|^{-\mu(\la)(p+1)}\varphi^m_R(x,t)\, dx dt\\
\le C_2  R^{-2sp'}\int_{\R^{N+1}}  |x|^{-\mu(\la)}\varphi_R^{m-p'}(x,t)\, dx dt,
\end{equation}
for some positive constant $C_2>0$.
The change of variables $\tilde{x}=R^{-1} x$ and $\tilde{t}=R^{-2} t$ yields
\begin{multline}\label{proof:theor_Fujita:8}
\int_{\R^{N+1}}v(x,t)^p |x|^{-\mu(\la)(p+1)}\varphi_R^m(x,t)\, dx dt\\
\le C_2  R^{N+2-\mu(\la)-2sp'}\int_{\R^{N+1}} |x|^{-\mu(\la)}\varphi^{m-p'}(\tilde{x},\tilde{t})\, d\tilde{x} d\tilde{t}.
\end{multline}
Thanks to Fatou's Lemma, we can pass to the limit as $R \to \infty$  and conclude that
\begin{equation}\label{proof:theor_Fujita:9}
\int_{\R^{N+1}}v(x,t)^p |x|^{-\mu(\la)(p+1)}\, dx dt <+\infty.
\end{equation}
In particular, if $p<1+\frac{2s}{N+2-2s-\mu(\la)}$,  we obtain
\begin{equation}
\int_{\R^{N+1}}v(x,t)^p |x|^{-\mu(\la)(p+1)}\, dx dt=0,
\end{equation}
thus  $v\equiv 0$.  Since $w(x,t)=v(x,t)|x|^{-\mu(\la)}$, then $w
\equiv 0$ in $\R^N\times \R$, a contradiction.

We deal now with the critical case $p=1+\frac{2s}{N+2-2s-\mu(\la)}$. Let $\delta \in (0,\frac{1}{p'-1})$. Then
\begin{multline}\label{proof:theor_Fujita:10}
\int_{\R^{N+1}} v(x,t) |x|^{-\mu(\la)}\varphi^{m-1}_R(x,t)L^s(\varphi_R(x,t))\, dx dt \\
=\int_{\R^{N+1}} v(x,t) |x|^{-\mu(\la)}\varphi^{m-1}_R(x,t)(1-\varphi_R(x,t))^\delta(1-\varphi_R(x,t))^{-\delta} L^s(\varphi_R(x,t))\, dx dt\\
\le\left( \int_{\R^{N+1}}  |x|^{\mu(\la)(p'-1)}\varphi_R^{m-p'}(x,t)(1-\varphi_R(x,t))^{\delta(1-p')}(L^s(\varphi_R(x,t)))^{p'}\, dx dt\right)^\frac{1}{p'}\\
\times \left(\int_{\R^{N+1}}v(x,t)^p |x|^{-\mu(\la)(p+1)}\varphi^m_R(x,t)(1-\varphi_R(x,t))^{\delta}\, dx dt\right)^\frac{1}{p}.
\end{multline}
Furthermore, arguing as above, we can show that
\begin{multline}\label{proof:theor_Fujita:11}
\int_{\R^{N+1}}  |x|^{\mu(\la)(p'-1)}\varphi_R^{m-p'}(x,t)(1-\varphi_R(x,t))^{\delta(1-p')}(L^s(\varphi_R(x,t)))^{p'}\, dx dt \\
\le  C_2 \int_{\R^{N+1}}  |x|^{-\mu(\la)}\varphi^{m-p'}(x,t)(1-\varphi(x,t))^{\delta(1-p')}\, dx dt.
\end{multline}
Since we have chosen   $\delta \in (0,\frac{1}{p'-1})$, we conclude that the  integral in the right hand side of \eqref{proof:theor_Fujita:11}  is finite.
In conclusion from \eqref{proof:theor_Fujita:10} and \eqref{proof:theor_Fujita:11} we deduce that there exists a constant $C_3>0$ such that
\begin{equation}
\int_{\R^{N+1}} v(x,t) |x|^{-\mu(\la)}\varphi^{m-1}_R(x,t)L^s(\varphi_R(x,t))\, dx dt
\le C_3\left(\int_{\R^{N+1}\setminus B'_R}v(x,t)^p |x|^{-\mu(\la)(p+1)}\, dx dt\right)^\frac{1}{p}
\end{equation}
for any $R\ge 1$. Passing to the limit as $R \to \infty$   we conclude that
\begin{equation}
\lim_{R\to \infty}\int_{\R^{N+1}} v(x,t) |x|^{-\mu(\la)}\varphi^{m-1}_R(x,t)L^s(\varphi_R(x,t))\, dx dt =0.
\end{equation}
Then from \eqref{proof:theor_Fujita:5.1} and  Fatou's Lemma we deduce that  $v\equiv 0$ and so $w\equiv0$.
\end{proof}

\begin{remark}\label{remark_la=0}
It is easy too see that, if $\la=0$, then proceeding as in Theorem \ref{theor_Fujita}, we can show that
there exists $T^*<\infty$ such that
\begin{equation*}   \lim\limits_{t\to   T^{*}}\int_{\R^N}w^p(x,t)\,dx=\iy,
\end{equation*}
see also \cite{T_frac_heat}.
\end{remark}

\subsection{Existence of supersolutions}\label{subsection_existence_super}
We start by proving the following result.
\begin{proposition}\label{prop_super_sol}
Let $p \in \left(1+\frac{2s}{N-\mu(\la)+2-2s}, 1+\frac{2s}{\mu(\la)}\right)$. Then there exist $\e_0>0$ and  $\delta>0$ such that for any $\e \in (0,\e_0]$ and any $\la_1 \in (\la,\la+\delta]$ the function
\begin{equation}\label{def_U}
U_{\e, \la_1}(x,y,t):= \e (1+t)^{\frac{\mu(\la_1)}{2}-\frac{s}{p-1}} \Phi_{\la_1}(z) e^{-\frac{|z|^2}{4(t+1)}}
\end{equation}
is a positive classical solution of the problem
\begin{equation}\label{prob_Ue}
\begin{cases}
y^{1-2s}(U_{\e, \la_1})_t-\dive(y^{1-2s}\nabla U_{\e, \la_1})\ge 0, &\text{ in } \R_+^{N+1}\setminus\{0\} \times (0,+\infty),\\
-\lim\limits_{y \to 0^+}y^{1-2s}\pd{U_{\e, \la_1}}{y}\ge\kappa_s (\frac{\la}{|x|^{2s}}u_{\e, \la_1}+u_{\e, \la_1}^p), &\text{ on }(\R^N\setminus\{0\}) \times (0,+\infty),\\
U_{\e, \la_1}(z,0)\ge 0, &\text{ on }\R_+^{N+1}\setminus\{0\},
\end{cases}
\end{equation}
where $u_{\e, \la_1}(x,t):=U_{\e, \la_1}(x,0,t)=\e
(1+t)^{\frac{s}{p-1}-\frac{\mu(\la_1)}{2}} |x|^{-\mu(\la_1)}
e^{-\frac{|x|^2}{4(t+1)}}$ and $k_s$ is as in Theorem
\ref{theorem_extension}. Furthermore for any $T>0$, we have that
$U_{\e, \la_1} \in L^2((0,T),H^1(\R^{N+1}_+,y^{1-2s}))$ and
$(U_{\e, \la_1})_t \in L^2((0,T),L^2(\R^{N+1}_+,y^{1-2s})$ .
\end{proposition}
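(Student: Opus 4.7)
The plan is to verify each of the three conditions in \eqref{prob_Ue} by direct computation, exploiting the special product structure $U_{\e,\la_1}(z,t)=\e f(t)\Phi_{\la_1}(z)G(z,t)$, where $f(t):=(1+t)^{\alpha}$ with $\alpha:=\frac{\mu(\la_1)}{2}-\frac{s}{p-1}<0$ (by the upper bound on $p$) and $G(z,t):=e^{-|z|^2/(4(1+t))}$. The third condition $U_{\e,\la_1}(z,0)\ge 0$ is immediate from positivity of $\Phi_{\la_1}$. The strategy is to isolate, via Leibniz rules, terms that can be simplified by the three properties of $\Phi_{\la_1}$ from Proposition \ref{prop_Phi}: (i) $\dive(y^{1-2s}\nabla \Phi_{\la_1})=0$ in $\R^{N+1}_+$; (ii) the Euler identity $\nabla \Phi_{\la_1}\cdot z=-\mu(\la_1)\Phi_{\la_1}$; (iii) the boundary condition $-\lim_{y\to 0^+}y^{1-2s}\p_y\Phi_{\la_1}=\kappa_s\la_1|x|^{-2s-\mu(\la_1)}$.

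For the interior inequality, I would compute $\dive(y^{1-2s}\nabla(\Phi_{\la_1}G))$ by expanding via Leibniz, using (i) to kill the $\dive(y^{1-2s}\nabla\Phi_{\la_1})$ term, using (ii) together with $\nabla G=-\frac{z}{2(1+t)}G$ to simplify the cross term $y^{1-2s}\nabla G\cdot\nabla\Phi_{\la_1}$, and using the elementary identity $\dive(y^{1-2s}z)=(N+2-2s)y^{1-2s}$ to handle $\dive(y^{1-2s}\Phi_{\la_1}\nabla G)$. Combining with $(U_{\e,\la_1})_t=U_{\e,\la_1}\bigl[\frac{\alpha}{1+t}+\frac{|z|^2}{4(1+t)^2}\bigr]$, the $|z|^2$-terms cancel and one obtains the clean identity
\[
y^{1-2s}(U_{\e,\la_1})_t-\dive(y^{1-2s}\nabla U_{\e,\la_1})=\frac{y^{1-2s}U_{\e,\la_1}}{1+t}\left[\frac{N+2-2s-\mu(\la_1)}{2}-\frac{s}{p-1}\right].
\]
This is non-negative precisely when $p\ge 1+\frac{2s}{N+2-2s-\mu(\la_1)}$, which, by the strict lower bound $p>1+\frac{2s}{N+2-2s-\mu(\la)}$ and the continuity of $\mu$ (Proposition \ref{prop_mu}), holds once $\delta$ is small enough to keep $\la_1\in(\la,\la+\delta]$ close to $\la$.

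For the boundary condition, using (iii) and $y^{2-2s}\Phi_{\la_1}(x,y)\to 0$ as $y\to 0^+$ (from \eqref{ineq_Phi_|z|}, since $2-2s>0$), one computes
\[
-\lim_{y\to 0^+}y^{1-2s}\pd{U_{\e,\la_1}}{y}=\kappa_s\frac{\la_1}{|x|^{2s}}\,u_{\e,\la_1}(x,t).
\]
Thus the boundary inequality reduces to $(\la_1-\la)|x|^{-2s}\ge u_{\e,\la_1}^{p-1}$. Plugging in and setting $u:=|x|^2/(1+t)$ and $\gamma:=2s-(p-1)\mu(\la_1)$, all $(1+t)$-powers cancel identically because $\alpha(p-1)+\gamma/2=0$, and the inequality collapses to
\[
\e^{p-1}\,u^{\gamma/2}e^{-(p-1)u/4}\le\la_1-\la.
\]
Since $p<1+\frac{2s}{\mu(\la)}$ and $\mu$ is continuous, $\gamma>0$ for $\la_1$ close to $\la$, so the left-hand side is bounded by $\e^{p-1}M$ with $M:=\sup_{u\ge 0}u^{\gamma/2}e^{-(p-1)u/4}<+\infty$. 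Choosing $\e_0$ small enough so that $\e_0^{p-1}M\le\la_1-\la$ (with $\delta$ and $\e_0$ coupled: $\delta$ first fixes the range of $\la_1$, then $\e_0=\e_0(\la_1)$ is picked accordingly) closes this estimate.

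Finally, the $L^2$-integrability assertions follow from the Gaussian factor. Indeed, $|U_{\e,\la_1}|^2 y^{1-2s}$, $|\nabla U_{\e,\la_1}|^2y^{1-2s}$, and $|(U_{\e,\la_1})_t|^2y^{1-2s}$ all decay super-polynomially at infinity uniformly in $t\in[0,T]$, while near $z=0$ the bounds $\Phi_{\la_1}(z)\lesssim|z|^{-\mu(\la_1)}$ and $|\nabla\Phi_{\la_1}(z)|\lesssim|z|^{-\mu(\la_1)-1}$ combined with polar coordinates in $\R^{N+1}_+$ and the strict inequality $\mu(\la_1)<\frac{N-2s}{2}$ (Proposition \ref{prop_mu}) give integrability. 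The main technical obstacle is organizing the interior Leibniz computation so that the cancellations become manifest; everything else is a careful bookkeeping of exponents.
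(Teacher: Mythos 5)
Your proposal is correct and follows essentially the same route as the paper: the same product ansatz, the same reliance on the three properties of $\Phi_{\la_1}$ from Proposition \ref{prop_Phi} (vanishing divergence, Euler identity, and the singular boundary flux), the same cancellation of the Gaussian $|z|^2$-terms yielding the constant-coefficient interior identity, and the same self-similar change of variables reducing the boundary inequality to a bounded-supremum condition on $|\xi|^{\gamma}e^{-(p-1)|\xi|^2/4}$. The only difference is cosmetic bookkeeping (your $\alpha$ versus the paper's $\theta=-\alpha$), plus you spell out the $L^2$-integrability at the end where the paper merely declares it clear.
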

\begin{proof}
Let us consider the family of functions
\begin{equation*}
U(x,y,t):=\e (1+t)^{-\theta} \Phi_{\la_1}(z) e^{-\frac{|z|^2}{4(t+1)}},
\end{equation*}
with $\e >0$, $\theta>0$ and $\la_1 \in (\la,\Lambda_{N,s})$.
With a direct computation we can see that
\begin{align*}
&U_t(z,t)=\e\left[-\theta +\frac{1}{4}t^{-1}|z|^2\right]    (1+t)^{-\theta-1} \Phi_{\la_1}(z) e^{-\frac{|z|^2}{4(1+t)}}, \\
&\dive(y^{1-2s}\nabla U)(z,t)= \e y^{1-2s}  \left[-\mu(\la_1)-\frac{1}{2}(N+2-2s) +\frac{1}{4}t^{-1}|z|^2\right]    (1+t)^{-\theta-1} \Phi_{\la_1}(z) e^{-\frac{|z|^2}{4(1+t)}},
\end{align*}
thanks to \eqref{prob_Phi} and \eqref{eq_Phi_nabla}. Hence
\begin{multline}\label{proof_super_sol:1}
y^{1-2s}U_t(z,t)-\dive(y^{1-2s}\nabla U)(z,t)\\
=\e y^{1-2s}  \left[-\theta -\mu(\la_1)+\frac{1}{2}(N+2-2s)\right]  (1+t)^{-\theta-1} \Phi_{\la_1}(z) e^{-\frac{|z|^2}{4(1+t)}}.
\end{multline}
On the other hand by  \eqref{prob_Phi}
\begin{equation*}
-\lim_{y \to 0^+}y^{1-2s} \pd{U}{y}= \e (1+t)^{-\theta} \kappa_s\frac{\la_1}{|x|^{2s}}|x|^{-\mu(\la_1)}e^{-\frac{|x|^2}{4(1+t)}}.
\end{equation*}
Then $-\lim\limits_{y \to 0^+}y^{1-2s}\pd{U_{\e, \la_1}}{y}\ge k_s \left(\frac{\la}{|x|^{2s}}u_{\e, \la_1}+u_{\e, \la_1}^p\right)$ if and only if
\begin{equation*}
 (1+t)^{-\theta} (\la_1-\la)|x|^{-\mu(\la_1)-2s} e^{-\frac{|x|^2}{4(1+t)}} \ge \e^{p-1} (1+t)^{-\theta p} |x|^{-p\mu(\la_1)} e^{-p\frac{|x|^2}{4(1+t)}}.
\end{equation*}
The change of variables $x:=(1+t)^{\frac{1}{2}}\xi$ yields
\begin{equation*}
(1+t)^{-\theta-\frac{\mu(\la_1)}{2}-s} (\la_1-\la)|\xi|^{-\mu(\la_1)-2s} \ge \e^{p-1} (1+t)^{-\theta p-\frac{p\mu(\la_1)}{2}} |\xi|^{-p\mu(\la_1)} e^{-(p-1)\frac{|\xi|^2}{4}}.
\end{equation*}
Choosing $\theta:=\frac{s}{p-1}-\frac{\mu(\la_1)}{2}$ we obtain
\begin{equation}\label{proof_super_sol:2}
(\la_1-\la)|\xi|^{p\mu(\la_1)-\mu(\la_1)-2s} \ge \e^{p-1}  e^{-(p-1)\frac{|\xi|^2}{4}}.
\end{equation}
With this choice of $\theta$ and a direct computation we can see that
\begin{equation}
-\theta -\mu(\la_1)+\frac{1}{2}(N+2-2s) >0
\end{equation}
if and only if
\begin{equation}
 p> 1+\frac{2s}{N+2-2s-\mu(\la_1)}.
\end{equation}
Hence, choosing $\la_1$ close enough to $\la$,
\begin{equation}
y^{1-2s}U_t(z,t)-\dive(y^{1-2s}\nabla U)(z,t) \ge 0.
\end{equation}
Finally, up to choosing  $\la_1$  closer to $\la$, we also have  that
\begin{equation}
p< 1+\frac{2s}{\mu(\la_1)}
\end{equation}
and so \eqref{proof_super_sol:2} holds for any $\xi \in \R^{N}$ if $\e>0$ is small enough.
In conclusion we have proved that $U_{\e,\la_1}$ is a classical solution of  problem \eqref{prob_Ue}. Since $\Phi_{\la_1} \in H^1(B^+_1,y^{1-2s})$, the last claim is clear.
\end{proof}

\begin{theorem}\label{theor_exsitence_super}
Let $p \in \left(1+\frac{2s}{N-\mu(\la)+2-2s}, 1+\frac{2s}{\mu(\la)}\right)$ and
$\delta,\e_0 $ as in Proposition \ref{prop_super_sol}.
Let $f$ be a measurable non-negative, non-trivial function  such that $f(x,t)=0$ a.e. in $\R^N
\times (-\infty,0)$. Assume that for some $\delta_1 \in (0,\delta)$
\begin{equation}\label{ineq_f}
f(x,t) \le \delta_1 (1+t)^{\frac{\mu(\la_1)}{2}-\frac{s}{p-1}} |x|^{-\mu(\la_1)-2s} e^{-\frac{|x|^2}{4(1+t)}}.
\end{equation}
Then for some a small enough  $\e \in (0,\e_0)$ and $\la_1 \in (\la,\la+\delta)$ close enough to $\la$, the function
\begin{multline}\label{eq_w_super_sol_rappresentation}
w(x,t):=\frac{\Gamma(1-s)}{2^{2s-1}\Gamma(s)^2(4\pi)^{\frac{N}{2}}}\int_0^\infty\int_{\R^N}  \frac{e^{-\frac{|z|^2}{4\tau}}}{\tau^{\frac{N}{2}+1-s}} \, \,
\Big[\e \la(1+t-\tau)^{\frac{s}{p-1}-\frac{\mu(\la_1)}{2}} |x-z|^{-\mu(\la_1)-2s} e^{-\frac{|x-z|^2}{4(t-\tau+1)}}\\
+\e^p(1+t-\tau)^{\frac{ps}{p-1}-\frac{p\mu(\la_1)}{2}} |x-z|^{-p\mu(\la_1)} e^{-p\frac{|x-z|^2}{4(t-\tau+1)}}
+f(x-z,t-\tau)\Big]\, dz \, d\tau,
\end{multline}
 is a non-trivial, non-negative,  very weak supersolution of   problem
\eqref{prob_frac_heat_hardy} and   $w\in L^1_{loc}(\R^{N+1})$.
\end{theorem}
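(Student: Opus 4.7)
The plan is to leverage the classical supersolution $U_{\e,\la_1}$ from Proposition \ref{prop_super_sol}, whose boundary inequality has an unused surplus of order $(\la_1-\la)u_{\e,\la_1}/|x|^{2s}$, and to absorb the source $f$ into that surplus. Inspecting the algebraic reduction leading to \eqref{proof_super_sol:2}, augmenting the right-hand side by $\kappa_s f$ and using the hypothesis \eqref{ineq_f}, the inequality upgrades, after the scaling $\xi = x/\sqrt{1+t}$, to
\[
(\la_1-\la)\e - \delta_1 \,\ge\, \e^{p-1}\sup_{\xi\in\R^N}\big\{|\xi|^{-(p-1)\mu(\la_1)+2s}\, e^{-(p-1)|\xi|^2/4}\big\}.
\]
The supremum is finite since $p<1+2s/\mu(\la_1)$ forces the exponent on $|\xi|$ to be positive while the Gaussian yields decay at infinity; since $\delta_1<\delta$, the inequality is solved by first choosing $\e \in (0,\e_0)$ small and then $\la_1 - \la$ slightly above $\delta_1/\e$. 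With these choices $U_{\e,\la_1}$ is a classical supersolution of the extended problem now including $f$ on the right-hand side.

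Second, by the extension theorem (Theorem \ref{theorem_extension}) applied in the converse direction, this upgraded supersolution property for $U_{\e,\la_1}$ translates into the pointwise inequality
\[
H^s(u_{\e,\la_1}) \,\ge\, \frac{\la}{|x|^{2s}}u_{\e,\la_1} + u_{\e,\la_1}^p + f \quad \text{in } \R^N\times(0,\infty).
\]
Inverting through \eqref{Js_properties_inversion} and using the non-negativity of the Gaussian kernel in \eqref{def_Js}, one gets $u_{\e,\la_1} \ge J_s\!\big(\la u_{\e,\la_1}/|x|^{2s} + u_{\e,\la_1}^p + f\big)$. Up to the multiplicative constant sitting in front of the integral in \eqref{eq_w_super_sol_rappresentation}, the right-hand side is exactly $w$; hence $w \le C\, u_{\e,\la_1}$ a.e. Since $\mu(\la_1) < \frac{N-2s}{2} < N$ by \eqref{ineq_mu_la} and $u_{\e,\la_1}$ is a Gaussian-decaying function of $x$, we have $u_{\e,\la_1} \in L^1_{loc}(\R^{N+1})$, hence $w \in L^1_{loc}(\R^{N+1})$.

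The very weak supersolution condition of Definition \ref{def_very_weak_solutions} now follows from the monotonicity of $J_s$ in its argument: using $w \le u_{\e,\la_1}$ (the harmless multiplicative constant can be absorbed into a slight readjustment of $\e$ and $\la_1$), one has $\la w/|x|^{2s}+w^p+f \le \la u_{\e,\la_1}/|x|^{2s}+u_{\e,\la_1}^p+f$ and therefore
\[
J_s\!\left(\frac{\la w}{|x|^{2s}}+w^p+f\right) \,\le\, J_s\!\left(\frac{\la u_{\e,\la_1}}{|x|^{2s}}+u_{\e,\la_1}^p+f\right) \,=\, w.
\]
Non-triviality of $w$ is immediate from $f \not\equiv 0$ together with the strict positivity of the kernel in \eqref{def_Js}.

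The principal technical obstacle lies in the second step: the function $u_{\e,\la_1}$ does not vanish for $t\le 0$, so Theorem \ref{theorem_extension}, which relies on the construction \eqref{def_W} encoding this vanishing, does not apply verbatim. The workaround is either to verify the pointwise inequality $H^s(u_{\e,\la_1}) \ge \la u_{\e,\la_1}/|x|^{2s}+u_{\e,\la_1}^p+f$ directly from the singular integral representation \eqref{def_Hs_pointwise}, handling the ``initial'' term separately, or to truncate $U_{\e,\la_1}$ in time, apply Theorem \ref{theorem_extension} to the truncation and pass to the limit; in both approaches the boundary contribution at $t=0$ requires careful bookkeeping, as does the absorption of the constant $\kappa_s$ appearing in \eqref{eq_w_super_sol_rappresentation}.
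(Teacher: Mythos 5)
Your first step (absorbing $f$ into the boundary surplus of $U_{\e,\la_1}$) is the same idea the paper uses, though the reduced inequality after the scaling $\xi=x/\sqrt{1+t}$ should read $\e(\la_1-\la)-\delta_1\ge\e^{p}\,\sup_\xi|\xi|^{2s-(p-1)\mu(\la_1)}e^{-(p-1)|\xi|^2/4}$ (the power on $\e$ is $p$, not $p-1$: nothing has been divided by $\e$ once $f$ is in play). Consequently your proposed order of choices --- fix $\e$ small first, then choose $\la_1-\la$ ``slightly above $\delta_1/\e$'' --- cannot work, because $\la_1$ is confined to $(\la,\la+\delta)$ and for small $\e$ one would need $\la_1-\la>\delta_1/\e>\delta$; the two parameters must be chosen simultaneously with $\e(\la_1-\la)>\delta_1$ and $\la_1-\la<\delta$.

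The genuine gap is in your second step, and you essentially diagnose it yourself: you want to read the extension Theorem \ref{theorem_extension} ``in the converse direction'' to conclude $H^s(u_{\e,\la_1})\ge\frac{\la}{|x|^{2s}}u_{\e,\la_1}+u_{\e,\la_1}^p+f$ pointwise. But Theorem \ref{theorem_extension} is a one-way statement: for $w\in\mathop{\rm Dom}(H^s)$ the \emph{specific} function $W$ in \eqref{def_W} solves the extended problem. It does not assert that an arbitrary supersolution $U$ of the extended problem has a trace solving the fractional inequality, and $U_{\e,\la_1}$ defined in \eqref{def_U} is a product Ansatz, not the Poisson-type extension \eqref{def_W} of $u_{\e,\la_1}$. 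Moreover $u_{\e,\la_1}$ does not vanish for $t\le 0$ and is not in $\mathop{\rm Dom}(H^s)$, so the setup of Theorem \ref{theorem_extension} does not apply. Your two suggested workarounds (verify $H^s(u_{\e,\la_1})\ge\dots$ directly from \eqref{def_Hs_pointwise}, or truncate in time and pass to the limit) are left as sketches and both would require careful handling of the nonzero past of $u_{\e,\la_1}$ --- neither is what the paper does.

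The paper sidesteps this entirely: it never asserts that $u_{\e,\la_1}$ is a supersolution of the fractional equation. Instead it \emph{defines} $w:=\kappa_s J_s\bigl(\frac{\la}{|x|^{2s}}u_{\e,\la_1}+u_{\e,\la_1}^p+f\bigr)$ (this is exactly \eqref{eq_w_super_sol_rappresentation}), builds approximants $w_n=\kappa_s J_s(\eta_n[\dots])$ which \emph{are} in $\mathop{\rm Dom}(H^s)$ (Lemmas \ref{lemma_Jsg_in L^2}, \ref{lemma_Jsg_in DHs}) and therefore have genuine extensions $W_n$ of the form \eqref{def_W} with vanishing initial data, and then compares $W_n$ against $U_{\e,\la_1}$ using Corollary \ref{corollary_comparison_extended} on $\R^{N+1}_+\times(0,T)$ (where only $W_n(\cdot,0)=0\le U_{\e,\la_1}(\cdot,0)$ is needed, so the past of $U_{\e,\la_1}$ is irrelevant). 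This yields $w_n\le u_{\e,\la_1}$, hence $w=\lim_n w_n\le u_{\e,\la_1}$ and $w\in L^1_{\rm loc}$; the supersolution property then follows from monotonicity of $J_s$: $J_s\bigl(\frac{\la}{|x|^{2s}}w+w^p+f\bigr)\le J_s\bigl(\frac{\la}{|x|^{2s}}u_{\e,\la_1}+u_{\e,\la_1}^p+f\bigr)$ while the left side is compared with $w$ itself. Your final remark that ``the harmless multiplicative constant can be absorbed into a slight readjustment of $\e$ and $\la_1$'' is not available here, since $\e$ and $\la_1$ are already baked into the definition of $w$ in \eqref{eq_w_super_sol_rappresentation}.
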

\begin{proof}
We start by observing that  for any $p \in \left(1+\frac{2s}{N-\mu(\la)+2-2s}, 1+\frac{2s}{\mu(\la)}\right)$ and $\la_1 \in (\la,\la+\delta)$ the function
\begin{equation}\label{def_varphi}
\varphi(t,x)=(1+t)^{(p-1)\frac{\mu(\la_1)}{2}-s}|x|^{-p\mu(\la_1)+\mu(\la_1)+2s}e^{-(p-1)\frac{|x|^2}{4(t+1)}}
\end{equation}
is bounded on $\R^N \times (0,+\infty)$.
It follows that, if $f$ is as above, taking for example $\la_1:= \la +\frac{\delta_1}{2}$, there exists an  $\e>0$ such that the function $U_{\e,\la_1}$ is a classical supersolution of the problem
\begin{equation}
\begin{cases}
y^{1-2s}(U_{\e, \la_1})_t-\dive(y^{1-2s}\nabla U_{\e, \la_1})\ge 0, &\text{ in } \R_+^{N+1}\setminus\{0\} \times (0,+\infty),\\
-\lim\limits_{y \to 0^+}y^{1-2s}\pd{U_{\e, \la_1}}{y}\ge k_s(\frac{\la}{|x|^{2s}}u_{\e, \la_1}+u_{\e, \la_1}^p+f), &\text{ on }(\R^N\setminus\{0\}) \times (0,+\infty),\\
U_{\e, \la_1}(z,0)\ge 0, &\text{ on }\R_+^{N+1}\setminus\{0\}.
\end{cases}
\end{equation}
Let us extend trivially the function $u_{\e, \la_1}$ to $\R^{N+1}$ and still denote, with a slight abuse of notation, the extended function with  $u_{\e, \la_1}$. Then
$\frac{\la}{|x|^{2s}}u_{\e, \la_1}+u_{\e, \la_1}^p+f \in L^1(\R^{N}\times (-\infty,T))$ for any $T>0$ and $\frac{\la}{|x|^{2s}}u_{\e, \la_1}+u_{\e, \la_1}^p+f\equiv 0$ on $\R^N \times(-\infty,0)$.
It follows that, letting $k_s$ be as in Theorem \ref{theorem_extension},
\begin{equation}\label{def_super_sol_w}
w:=k_sJ_s\left(\frac{\la}{|x|^{2s}}u_{\e, \la_1}+u_{\e, \la_1}^p+f\right)
\end{equation}
is well-defined, $w \in L^1_{loc}(\R^{N+1})$ and $w \equiv 0$ on $\R^N \times(-\infty,0)$ by \eqref{Js_properties_intial_datum}.
Let $\eta_n$ be as in Subsection \ref{subsection_existence} and let us define for any $n \in \mathbb{N}$
\begin{equation}\label{def_wn}
w_n:=k_sJ_s\left(\eta_n\left[\frac{\la}{(|x|+\frac{1}{n})^{2s}}\frac{u_{\e, \la_1}}{1+\frac{1}{n}u_{\e, \la_1}}
 +\frac{u^p_{\e, \la_1}}{1+\frac{1}{n}u^p_{\e, \la_1}}+\frac{f}{1+\frac{1}{n}f}\right] \right).
\end{equation}
Then  $w_n \in \mathop{\rm{Dom}}(H^s)$ thanks to Lemmas \ref{lemma_Jsg_in L^2} and  \ref{lemma_Jsg_in DHs}. Hence we may define  its extension $W_n$ as in Theorem \ref{theorem_extension} and notice
that $W_n$ weekly  solves the problem
\begin{equation*}
\begin{cases}
y^{1-2s}(W_n)_t-\dive(y^{1-2s}\nabla W_n)=0, &\text{ in } \R_+^{N+1} \times (0,+\infty),\\
-\lim\limits_{y \to 0^+}y^{1-2s}\pd{W_n}{y}= \kappa_s \eta_n\left[\frac{\la}{(|x|+\frac{1}{n})^{2s}}\frac{u_{\e, \la_1}}{1+\frac{1}{n}u_{\e, \la_1}}
+\frac{u^p_{\e, \la_1}}{1+\frac{1}{n}u^p_{\e, \la_1}}+\frac{f}{1+\frac{1}{n}f}\right], &\text{on }\R^N\times \times (0,+\infty),\\
W_n(z,0)=0 &\text{ on }\R^{N+1}_+.
\end{cases}
\end{equation*}
By Corollary \ref{corollary_comparison_extended}, it follows that $w_n \le w_{n+1}\le u_{\e,\la_1}$ for any $n \in \mathbb{N}$. Then, by the Dominated Convergence Theorem,
\begin{multline*}
w=k_sJ_s\left(\frac{\la}{|x|^{2s}}u_{\e, \la_1}+u_{\e, \la_1}^p+f\right)\\
=\lim_{n\to \infty}k_sJ_s\left(\eta_n\left(\frac{\la}{(|x|+\frac{1}{n})^{2s}}\frac{u_{\e, \la_1}}{1+\frac{1}{n}u_{\e, \la_1}}
+\frac{u^p_{\e, \la_1}}{1+\frac{1}{n}u^p_{\e, \la_1}}+\frac{f}{1+\frac{1}{n}f}\right) \right)=\lim_{n\to \infty}w_n \le u_{\e,\la_1},
\end{multline*}
a.e. in $\R^{N+1}$. We conclude that
\begin{equation*}
w=k_sJ_s\left(\frac{\la}{|x|^{2s}}u_{\e, \la_1}+u_{\e, \la_1}^p+f\right) \ge J_s\left(\frac{\la}{|x|^{2s}}w+w^p+f\right).
\end{equation*}
Hence $w$ is a very  weak supersolution of \eqref{prob_frac_heat_hardy}, that is, is a supersolution  in the sense of Definition \ref{def_very_weak_solutions}.
\end{proof}

From Theorem \ref{theor_exsitence_super} and Proposition \ref{prop_existence_very_weak_solution} we can immediately deduce the existence of very weak solutions of problem \eqref{prob_frac_heat_hardy}
for small datum.
\begin{corollary}\label{corollary_existence_very_weak_solution}
Let $p \in \left(1+\frac{2s}{N-\mu(\la)+2-2s}, 1+\frac{2s}{\mu(\la)}\right)$ and suppose that $f$ satisfies the assumptions of Theorem \ref{theor_exsitence_super}.
Then  \eqref{prob_frac_heat_hardy} has a non-negative, non-trivial  very weak  solution $w\in L^1_{loc}(\R^{N+1})$.
\end{corollary}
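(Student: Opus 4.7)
The plan is to combine the two preceding results in a direct way. Theorem \ref{theor_exsitence_super} supplies, under the standing hypotheses on $p$ and $f$, a concrete non-negative, non-trivial very weak supersolution $u \in L^1_{loc}(\R^{N+1})$ of problem \eqref{prob_frac_heat_hardy} (given explicitly by the convolution formula \eqref{eq_w_super_sol_rappresentation} and dominated pointwise by the classical supersolution $u_{\e,\la_1}$). With this supersolution in hand, Proposition \ref{prop_existence_very_weak_solution} applies verbatim: $f$ lies in $L^q(\R^N\times(-\infty,T))$ for any $T\in\R$ (with $q=1$, say), vanishes on $\R^N\times(-\infty,0)$, and the existence of a very weak supersolution is exactly the hypothesis needed.

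First I would verify that the integrability assumption on $f$ imposed by Proposition \ref{prop_existence_very_weak_solution} is satisfied. Since $f$ is non-negative, measurable, vanishes on $\R^N\times(-\infty,0)$, and satisfies the pointwise bound \eqref{ineq_f}, it follows that $f\in L^1_{loc}(\R^{N+1})$; indeed on any bounded cylinder $B_R'\times(0,T)$ the right-hand side of \eqref{ineq_f} is integrable because $\mu(\la_1)+2s<N$ (which is guaranteed by \eqref{ineq_mu_la} together with $p>1+\tfrac{2s}{N-\mu(\la)+2-2s}$ and the closeness of $\la_1$ to $\la$) and the Gaussian factor is bounded. This places $f$ in the class required by Definition \ref{def_very_weak_solutions}.

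Next I would invoke Theorem \ref{theor_exsitence_super} to obtain the very weak supersolution $u$ constructed in \eqref{eq_w_super_sol_rappresentation}, which is non-negative and non-trivial because $f\not\equiv 0$ and $J_s$ preserves positivity by \eqref{Js_properties_intial_datum} together with the explicit integral representation \eqref{def_Js}. Then I would apply Proposition \ref{prop_existence_very_weak_solution} with this $u$ as the supersolution: the associated monotone sequence $\{w_n\}$ defined in Lemma \ref{lemma_existence_approximated_problems} satisfies $0\le w_n\le w_{n+1}\le u$ by the iterative comparison argument used there, converges pointwise a.e. to some $w\in L^1_{loc}(\R^{N+1})$, and by the Dominated Convergence Theorem passes to the limit in the identity $w_n=J_s(h_n)$ to yield $w=J_s\bigl(\tfrac{\la}{|x|^{2s}}w+w^p+f\bigr)$, i.e.\ $w$ is a very weak solution in the sense of Definition \ref{def_very_weak_solutions}.

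Finally I would check non-triviality: since $w\ge w_0=J_s(\eta_0\tfrac{f}{1+f})$ a.e., and $\eta_0\tfrac{f}{1+f}$ is non-negative and not identically zero (as $f\not\equiv 0$), property \eqref{Js_properties_intial_datum} gives $w\not\equiv 0$. There is no real obstacle here—this corollary is a clean application of two earlier results, and the only point requiring care is ensuring the integrability of $f$ so that the hypotheses of Proposition \ref{prop_existence_very_weak_solution} are met, which is immediate from \eqref{ineq_f} and the restriction on $\mu(\la_1)$.
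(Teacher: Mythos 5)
Your plan is exactly the paper's: combine Theorem \ref{theor_exsitence_super} (which supplies a very weak supersolution) with Proposition \ref{prop_existence_very_weak_solution} (which upgrades a supersolution to a solution via the monotone scheme). The integrability check on $f$ is fine, although the claim $\mu(\la_1)+2s<N$ already follows from \eqref{ineq_mu_la} alone (and $N>2s$), without invoking the lower bound on $p$.

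There is one small but genuine slip in the non-triviality step. You assert that $\eta_0\frac{f}{1+f}$ is not identically zero ``as $f\not\equiv 0$,'' but $\eta_0$ has compact support (roughly $B_2'\times(\tfrac12,2)$), so it may well vanish on all of $\operatorname{supp} f$, in which case $w_0\equiv 0$ and the inequality $w\ge w_0$ tells you nothing. The clean way to conclude is to use the limiting identity you have already obtained: since $w$ satisfies $w=J_s\bigl(\frac{\la}{|x|^{2s}}w+w^p+f\bigr)\ge J_s(f)$ a.e., and $J_s(f)\not\equiv 0$ because $J_s$ has a strictly positive kernel (see \eqref{def_Js}) and $f$ is non-negative and non-trivial, it follows directly that $w\not\equiv 0$. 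Alternatively, observe that for $n$ large enough $\eta_n f\not\equiv 0$ and argue from $w\ge w_{n+1}\ge J_s\bigl(\eta_n \frac{f}{1+\frac1n f}\bigr)$.
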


\section{Open problems and subjects of further  investigation} \label{sec_open_problems}
In this last section we make a brief overview over remaining questions and further developments  concerning existence and non-existence results for the operator $H^s-\frac{\la}{|x|^{2s}}$ and other
fractional   parabolic operators.

As already stated in  Remark \ref{remark_inversion_Ls}, it is an
interesting open problem to obtain an inversion formula for the
operator $H^s-\frac{\la}{|x|^{2s}}$. Furthermore in the case
$p=p_+(\la,s)$  it is still not known if there exists or not weak
supersolution of \eqref{prob_frac_heat_hardy}. It is reasonable to
think that a non-existence result may hold, coherently with the
classical case $s=1$, but to obtain such a result seems to be
technically demanding.

Further possible subject of investigation includes considering
different non-linearity in problem \eqref{prob_frac_heat_hardy},
for example of the form $|\nabla u|^p$ or
$|(-\Delta)^{\frac{s}{2}}u|^p$, which, to the best of the authors
knowledge, have yet to be studied with or without the presence of
an Hardy-type potential. It may also be interesting to study
similar questions for  more general   parabolic fractional
operators, for example $(w_t- \Delta-\frac{\la}{|x|^2})^s$, under
assumption of positivity of the elliptic part  $-
\Delta-\frac{\la}{|x|^2}$. Finally the critical case
$\la=\Lambda_{N_s}$ is yet to be studied for the operator
$H^s-\frac{\la}{|x|^{2s}}$ and could be of interest.

All the subject mentioned above are object of current investigation by the authors.

\end{document}